\newtheorem{lma}{Lemma}[section]
\newaliascnt{thmCt}{lma}
\newtheorem{thm}[thmCt]{Theorem}
\newaliascnt{corCt}{lma}
\newtheorem{cor}[corCt]{Corollary}
\newaliascnt{prpCt}{lma}
\newtheorem{prp}[prpCt]{Proposition}
\theoremstyle{definition}
\newaliascnt{pgrCt}{lma}
\newtheorem{pgr}[pgrCt]{}
\newaliascnt{dfnCt}{lma}
\newtheorem{dfn}[dfnCt]{Definition}
\newaliascnt{rmkCt}{lma}
\newtheorem{rmk}[rmkCt]{Remark}
\newaliascnt{ntnCt}{lma}
\newtheorem{ntn}[ntnCt]{Notation}
\newcounter{theoremintro}
\newtheorem{thmIntro}[theoremintro]{Theorem}
\newtheorem{prpIntro}[theoremintro]{Proposition}
\newaliascnt{corIntroCt}{theoremintro}
\newtheorem{corIntro}[corIntroCt]{Corollary}
\newcommand{\NN}{\mathbb{N}}
\newcommand{\calU}{{\mathcal{U}}}
\newcommand{\PU}{\mathcal{PU}}
\newcommand{\sa}{\mathrm{sa}}
\newcommand{\vect}[1]{\mathbf{#1}}
\newcommand{\ca}{$C^*$-algebra}
\newcommand{\grPre}{\gr_0}
\newcommand{\freeVar}{\_\,}
\DeclareMathOperator{\diag}{diag}
\DeclareMathOperator{\Gen}{Gen}
\DeclareMathOperator{\rr}{rr}
\DeclareMathOperator{\gr}{gr}
\DeclareMathOperator{\Prim}{Prim}
\DeclareMathOperator{\Aut}{Aut}
\DeclareMathOperator{\hull}{hull}
\DeclareMathOperator{\locdim}{locdim}
\DeclareMathOperator{\Sub}{Sub}
\DeclareMathOperator{\stab}{Stab}
\DeclareMathOperator{\depth}{depth}
\DeclareMathOperator{\topdim}{topdim}
\newcommand{\SubSep}{\Sub_{\mathrm{sep}}}
\title{The generator rank of subhomogeneous $C^*$-algebras}
\author{Hannes Thiel}
\address{Hannes Thiel,
Department of Mathematics, Kiel University, Heinrich-Hecht-Platz~6, 24118 Kiel, Germany.}
\email{hannes.thiel@math.uni-kiel.de}
\urladdr{www.hannesthiel.org}
\thanks{The author was partially supported by the Deutsche Forschungsgemeinschaft (DFG, German Research Foundation) under Germany's Excellence Strategy EXC 2044-390685587 (Mathematics M\"{u}nster: Dynamics-Geometry-Structure).
}
\subjclass[2010]%
{Primary
46L05, 
46L85; 
Secondary
46M20, 
54F45, 
55M10. 
}
\keywords{$C^*$-algebras, generator rank, generator problem, single generation}
\date{\today}
\begin{document}

\begin{abstract}
We compute the generator rank of a subhomgeneous \ca{} in terms of the covering dimension of the pieces of its primitive ideal space corresponding to irreducible representations of a fixed dimension.
We deduce that every $\mathcal{Z}$-stable ASH-algebra has generator rank one, which means that a generic element in such an algebra is a generator.

This leads to a strong solution of the generator problem for classifiable, simple, nuclear \ca{s}:
a generic element in each such algebra is a generator.
Examples of Villadsen show that this is not the case for all separable, simple, nuclear \ca{s}.
\end{abstract}

\maketitle

\section{Introduction}

The generator rank of a unital, separable \ca{} $A$ is the smallest integer $n\geq 0$ such that the self-adjoint $(n+1)$-tuples that generate $A$ as a \ca{} are dense in $A^{n+1}_\sa$;
see \autoref{dfn:gr} for the nonunital and nonseparable case.
This invariant was introduced in \cite{Thi21GenRnk} to study the generator problem, which asks to determine the minimal number of (self-adjoint) generators for a given \ca.

One difficulty when studying the generator problem is that the minimal number of generators for a \ca{} can increase when passing to ideals or inductive limits.
The main advantage of the generator rank is that it enjoys nice permanence properties:
it does not increase when passing to ideals, quotients or inductive limits;
see \autoref{sec:gr}.

For example, using these permanence properties, one can easily show that AF-algebas have generator rank at most one.
In particular, every AF-algebra is generated by two self-adjoint elements, which solves the generator problem for this class of algebras;
see \cite[Theorem~7.3]{Thi21GenRnk}.

In this paper, we compute the generator rank of subhomogeneous \ca{s}.
Recall that a \ca{} is said to be $d$-homogeneous ($d$-subhomogeneous) if each of its irreducible representations has dimension (at most) $d$.
The typical example of a $d$-homogeneous \ca{} is $C_0(X,M_d)$ for a locally compact, Hausdorff space~$X$.
Further, a \ca{} is subhomogeneous if and only if it is a sub-\ca{} of $C_0(X,M_d)$ for some $X$ and some~$d$;
see for example \cite[Proposition~IV.1.4.3]{Bla06OpAlgs}.

Subhomogeneous \ca{s} and their inductive limits (called approximately subhomogeneous algebras, or simply ASH-algebras) play an important role in the structure and classification theory of \ca{s} since the algebras covered by the Elliott program are either purely infinite or approximately subhomogeneous.
To be precise, let us say that a \ca{} is \emph{classifiable} if it is unital, separable, simple, nuclear, $\mathcal{Z}$-stable (that is, it tensorially absorbs the Jiang-Su algebra $\mathcal{Z}$) and satisfies the Universal Coefficient Theorem (UCT).
By the recent breakthrough in the Elliott classification program, \cite{GonLinNiu15arX:mainClassification, EllGonLinNiu15arX:classFinDR2, TikWhiWin17QDNuclear}, two classifiable \ca{s} are isomorphic if and only if their Elliott invariants ($K$-theoretic and tracial data) are isomorphic.

Classifiable \ca{s} come in two flavors: stably finite and purely infinite.
Every stably finite, classifiable \ca{} is automatically an ASH-algebra.
A major application of our results is that every $\mathcal{Z}$-stable ASH-algebra has generator rank one;
see Corollary~\ref{thmD} below.
In  \cite{Thi20arX:grZstableRR0}, we show that every $\mathcal{Z}$-stable \ca{} of real rank zero has generator rank one.
This includes all purely infinite, classifiable \ca{s}.
It follows that every classifiable \ca{} has generator rank one and therefore contains a dense $G_\delta$-subset of generators;
see \autoref{corE} below. 

One important aspect of the generator problem is to determine if every separable, simple \ca{} is generated by a single operator (equivalently, by two self-adjoint elements).
While this remains unclear, we can refute the possibility that every separable, simple \ca{} contains a dense set of generators:
Villadsen constructed examples of separable, simple, approximately homogeneous \ca{s} (AH-algebras) of arbitrarily high real rank;
see \cite{Vil99SRSimpleCa}.
Let $A$ be such an AH-algebra with $\rr(A)=\infty$.
By \cite[Proposition~3.10]{Thi21GenRnk} (see \autoref{prp:grPre_gr}), the real rank is dominated by the generator rank, whence $\gr(A)=\infty$.
In particular, for every $n$ the generating self-adjoint $n$-tuples (if there are any) are not dense in~$A^{n}_\sa$.

In \cite[Theorem~3.8]{ThiWin14GenZStableCa}, the author and Winter showed that every unital, separable, $\mathcal{Z}$-stable \ca{} is singly generated.
The results of this paper and of \cite{Thi20arX:grZstableRR0} show that under additional assumptions, a (unital) separable, $\mathcal{Z}$-stable \ca{} even contains a dense set of generators.
This raises the natural question if every $\mathcal{Z}$-stable \ca{} has generator rank one;
see \cite[Remarks~5.8(2)]{Thi20arX:grZstableRR0}.

Given a locally compact, Hausdorff space $X$, the local dimension $\locdim(X)$ is defined as the supremum of the covering dimension of all compact subsets, with the convention that $\locdim(\emptyset)=-1$.
For $\sigma$-compact (in particular, second countable) locally compact, Hausdorff spaces, the local dimension agrees with the usual covering dimension (in general they differ).
In \autoref{sec:hom}, we compute the generator rank of arbitrary homogeneous \ca{s}:

\begin{thmIntro}[{\ref{prp:gr_hom}}]
\label{thmA}
Let $A$ be a $d$-homogeneous \ca{}.
Set $X:=\Prim(A)$.
If $d=1$, then $\gr(A) = \locdim(X\times X)$.
If $d\geq 2$, then:
\begin{align*}
\gr(A)
&= \left\lceil\frac{\locdim(X)+1}{2d-2}\right\rceil.
\end{align*}
\end{thmIntro}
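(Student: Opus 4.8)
The plan is to exploit the structure theory of homogeneous \ca{s}. By the Fell--Tomiyama--Takesaki theorem, a $d$-homogeneous \ca{} $A$ is isomorphic to the algebra $\Gamma_0(X,\mathcal{E})$ of continuous sections vanishing at infinity of a locally trivial bundle $\mathcal{E}$ over $X=\Prim(A)$ with fibre $M_d$. A self-adjoint tuple $\mathbf{a}=(a_0,\dots,a_n)$ generates $A$ precisely when (i) at every $x\in X$ the matrices $a_0(x),\dots,a_n(x)$ generate the fibre $M_d$ (fibrewise irreducibility), and (ii) the generated subalgebra separates the points of $X$ (no two distinct points carry unitarily equivalent evaluations); sufficiency of (i) and (ii) follows from a Stone--Weierstrass/Dauns--Hofmann argument for homogeneous algebras. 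First I would record this criterion and, using \autoref{dfn:gr}, reduce ``$\gr(A)\le n$'' to a general-position statement: generating tuples are dense in $A^{n+1}_\sa$ if and only if an arbitrary section of the associated $(M_d^\sa)^{n+1}$-bundle can be approximated by one avoiding the subset of non-generating configurations.

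The heart of the argument is a fibrewise dimension count. A self-adjoint tuple fails to generate $M_d$ exactly when it is reducible, i.e.\ admits a common invariant subspace; since the entries are self-adjoint, this is the same as simultaneous block-diagonalisability. Stratifying by the dimension $k$ of an invariant subspace and fibring the incidence variety over $\mathrm{Gr}(k,d)$, one computes that the stratum of tuples admitting a $k$-dimensional invariant subspace has real codimension $2nk(d-k)$ in $(M_d^\sa)^{n+1}$. Over $1\le k\le d-1$ this is minimised at $k=1$ (a common eigenvector), so the reducible locus, being a semialgebraic set whose codimension is the minimum of the strata codimensions, has codimension $2n(d-1)$. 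I would then invoke a general-position theorem for sections: over a base of local covering dimension $m$, a generic section avoids a semialgebraic subset of fibrewise codimension $c$ as soon as $m<c$. With $c=2n(d-1)$ this yields density of fibrewise-generating tuples precisely when $2n(d-1)\ge\locdim(X)+1$, that is $n\ge\lceil(\locdim(X)+1)/(2d-2)\rceil$, while an intersection-theoretic obstruction over a compact piece of maximal dimension supplies the matching lower bound.

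It remains to dispose of condition (ii) and of the case $d=1$. For $d\ge2$ I would show that point separation imposes no extra constraint: two distinct points carry unitarily equivalent tuples exactly when the map $X\times X\to\mathcal{M}\times\mathcal{M}$ hits the diagonal, where $\mathcal{M}$ is the moduli space of tuples modulo $U(d)$, of dimension $nd^2+1$; its diagonal has codimension $nd^2+1$. Since $\locdim(X\times X)\le2\locdim(X)$, the fibrewise bound already forces $nd^2\ge\locdim(X\times X)$ because $d^2/(2(d-1))\ge2$ is equivalent to $(d-2)^2\ge0$, so the fibrewise constraint dominates and the formula is unchanged. For $d=1$ the fibre condition is vacuous (any nonvanishing real function generates $\mathbb{C}$ fibrewise), and generation reduces to approximating by proper injective maps $X\to\mathbb{R}^{n+1}$; injectivity off the diagonal is equivalent to the difference map $X\times X\to\mathbb{R}^{n+1}$ avoiding $0$ away from the diagonal, a general-position problem whose target has codimension $n+1$, solvable densely exactly when $n\ge\locdim(X\times X)$. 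This gives $\gr(C_0(X))=\locdim(X\times X)$.

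The main obstacle will be making the general-position arguments rigorous in the purely topological setting: the base $X$ need be neither compact nor metrizable, the bundle $\mathcal{E}$ need not be trivial, and covering dimension does not by itself furnish transversality. I expect to need careful local-to-global gluing via partitions of unity together with dimension-theoretic approximation and selection results, and---for the sharp lower bounds---an explicit cohomological obstruction detecting the reducible locus (respectively the diagonal) so that small perturbations cannot remove the forced intersection once $\locdim(X)$ (respectively $\locdim(X\times X)$) exceeds the relevant threshold.
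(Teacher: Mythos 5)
Your proposal follows essentially the same route as the paper: the Fell--Tomiyama--Takesaki picture, the generation criterion ``fibrewise generation plus point separation'' (\autoref{prp:generators_C(X)-alg}), a codimension count showing the non-generating locus in $(M_d)^{n+1}_\sa$ has codimension $2n(d-1)$, and the observation that the separation condition is dominated by the fibrewise one because $4(d-1)\le d^2$, i.e.\ $(d-2)^2\ge 0$ (this is exactly the inequality $2\dim(X)<4n(d-1)\le nd^2+1$ used in \autoref{prp:gr_hom_sep}). Your stratification of the reducible locus by the dimension $k$ of a common invariant subspace, fibred over the Grassmannian, is an equivalent reparametrization of the paper's orbit-type decomposition for $\PU_d\curvearrowright E^{n+1}_d$ (\autoref{pgr:parametrization}, \autoref{prp:dimEdm}); both identify $k=1$, i.e.\ the stratum $M_{d-1}\oplus\mathbb{C}$, as the one of minimal codimension $2n(d-1)$, and the lower bound via \cite[Proposition~1.6]{BegEva91RRMatrixValued} plays the role of your ``intersection-theoretic obstruction.''

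The two obstacles you flag at the end are genuine and are precisely where the paper's remaining work lies; be aware that general position on the quotient is not the whole story for point separation. Approximating $\Psi(\vect{a})\colon X\to G^{n+1}_d/\PU_d$ by an injective map (via Luukkainen's theorem, since $2\dim(X)<nd^2+1$) produces a map into the orbit space, not a tuple in $A$; one must lift the approximating embedding back to an element of $A^{n+1}_\sa$ close to $\vect{a}$, compatibly with the bundle's transition functions. The paper does this with a homotopy extension lifting property for the principal $\PU_d$-bundle $G^{n+1}_d\to G^{n+1}_d/\PU_d$ (\autoref{prp:HELP}) applied inductively over a trivializing cover, after first arranging the approximating embeddings along a single homotopy (Step~1 of \autoref{prp:gr_hom_sep}). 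The reduction from the general case to the unital separable one is handled not by partitions of unity but by $\sigma$-complete cofinal families of separable subalgebras (\autoref{prp:LShom}) together with extending the $M_d$-bundle over $\beta Y$ via \cite{Phi07RSH}; if you pursue your outline you will need substitutes for both of these steps.
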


In particular, $\gr(C(X,M_d)) = \left\lceil\tfrac{\dim(X)+1}{2d-2}\right\rceil$ if $X$ is a compact, Hausdorff space and $d\geq 2$.
To prove Theorem~\ref{thmA}, we first show a Stone-Weierstra{\ss}-type result that characterizes when a tuple generates $C(X,M_d)$:
the tuple has to generate $M_d$ pointwise, and it has to suitably separate the points in $X$;
see \autoref{prp:generators_C(X)-alg}.
This indicates the general strategy to determine when generating $n$-tuples in $C(X,M_d)$ are dense:
first, we need to characterize when every tuple can be approximated by tuples that generate $M_d$ pointwise;
and second we need to characterize when a pointwise generating tuple can be approximated by tuples that separate the points.
To address the first point, we compute the codimension of the manifold of generating $n$-tuples of self-adjoint $d$-matrices;
see \autoref{prp:dimEdm}.
For the second point, we use known results characterizing when continuous maps to a manifold can be approximated by embeddings, in conjunction with a suitable version of the homotopy extension lifting property.

In \autoref{sec:subhom}, we compute the generator rank of $d$-subhomogeneous \ca{} by induction over $d$.
Given a $d$-subhomogeneous \ca{} $A$, we consider the ideal $I\subseteq A$ corresponding to irreducible representations of dimension $d$.
Then $A/I$ is $(d-1)$-subhomogeneous.
Using Theorem~\ref{thmA} and the assumption of the induction, we know the generator rank of $I$ and $A/I$.
The crucial result to compute the generator rank of the extension is following proposition, which we also expect to have further applications in the future.

\begin{prpIntro}[\ref{prp:CST-gr}]
Let $A$ be a separable \ca{}, and let $(I_k)_{k\in\NN}$ be a \emph{decreasing} sequence of ideals satisfying $\bigcup_k \hull(I_k)=\Prim(A)$.
Then
\[
\gr(A) = \sup_k \gr(A/I_k).
\]
\end{prpIntro}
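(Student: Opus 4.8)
The inequality $\gr(A)\ge\sup_k\gr(A/I_k)$ is immediate, since the generator rank does not increase under quotients (\autoref{sec:gr}); the content is the reverse inequality. Write $\pi_k\colon A\to A/I_k$ for the quotient maps and set $n:=\sup_k\gr(A/I_k)$, which we may assume finite. As the $I_k$ decrease, each $\pi_j$ factors through $\pi_k$ for $j\le k$, and the hypothesis $\bigcup_k\hull(I_k)=\Prim(A)$ is equivalent to $\bigcap_k I_k=0$; consequently $\|\pi_k(a)\|\nearrow\|a\|$ and $\|a\|=\sup_k\|\pi_k(a)\|$ for every $a\in A$. This norm identity is the device that lets one certify norm-approximation in $A$ from compatible approximation in all the quotients $A/I_k$, and it is the point where both hypotheses enter.

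By the characterisation of the generator rank through density of generating tuples (\autoref{dfn:gr} and the permanence results of \autoref{sec:gr}), together with separability and a Baire category argument, it suffices to prove the following local statement: for every $\vect a\in A^{n+1}_{\sa}$, every $c\in A_{\sa}$ and every $\epsilon>0$ there is $\vect b\in A^{n+1}_{\sa}$ with $\|\vect a-\vect b\|<\epsilon$ and $\mathrm{dist}\bigl(c,C^*(\vect b)\bigr)<\epsilon$. Indeed, once each (clearly open) set $\{\vect b:\mathrm{dist}(c,C^*(\vect b))<\epsilon\}$ is dense, intersecting over a countable dense set of $c$ and over $\epsilon=1/m$ produces, by Baire, a dense $G_\delta$ of tuples generating $A$.

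To construct $\vect b$ I would build it along the exhaustion $\hull(I_k)\uparrow\Prim(A)$ by an intertwining argument up the tower $\cdots\to A/I_k\to A/I_{k-1}\to\cdots$. Starting from $\vect b_0:=\vect a$, at stage $k$ one uses $\gr(A/I_k)\le n$ to perturb $\pi_k(\vect b_{k-1})$ to a generator of $A/I_k$, lifts this perturbation to a self-adjoint tuple $d_k$ of controlled norm over $A$ (norm-controlled self-adjoint lifting along $\pi_k$), and sets $\vect b_k:=\vect b_{k-1}+d_k$. Because $I_k\subseteq I_{k-1}$, a perturbation localised to the new ``shell'' $\hull(I_k)\setminus\hull(I_{k-1})$---that is, taken in the kernel $I_{k-1}/I_k$ of $A/I_k\to A/I_{k-1}$---leaves the coarser quotients fixed, so earlier stages are preserved. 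With the increments $\|d_k\|$ summable to less than $\epsilon$, the limit $\vect b:=\lim_k\vect b_k$ exists and satisfies $\|\vect b-\vect a\|<\epsilon$. Simultaneously one tracks, for the element $c$, approximants $x_k\in C^*(\vect b_k)$ built from words in the stage-$k$ generators, and arranges (this is the crux) that the $x_k$ converge in $A$; the norm identity then yields $x:=\lim_k x_k\in C^*(\vect b)$ with $\|x-c\|\le\epsilon$, since $\|\pi_k(x-c)\|$ is controlled at every level.

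The main obstacle is the reconciliation of two opposing demands at each stage. To keep $\vect b$ within $\epsilon$ of $\vect a$ and to protect the approximations already certified on $\hull(I_{k-1})$, the perturbation $d_k$ must be small; but to upgrade a generator of $A/I_{k-1}$ to one of $A/I_k$, and thereby to improve the approximation of $c$ on the new shell, one must move in the kernel $I_{k-1}/I_k$, and a priori such a correction is not small. Neither generation nor the generating-word approximants are stable under limits, so a naive small-perturbation scheme only produces a tuple lying near a generator of each $A/I_k$ without generating $A$. What is really needed is a relative density statement---that generators of $A/I_k$ lying over a prescribed generator of $A/I_{k-1}$ are dense in the corresponding fibre---used with geometric control of the increments in the finer quotients, where by $\|a\|=\sup_k\|\pi_k(a)\|$ the norm is actually detected. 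Establishing this relative lifting with simultaneous control of the perturbations of $\vect b$ and of the approximants of $c$, so that both series converge, is where essentially all the work lies; the decreasing hypothesis furnishes the compatible tower of quotients, and $\bigcap_k I_k=0$ guarantees that the resulting limit is faithful enough to generate $A$.
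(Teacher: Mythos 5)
There is a genuine gap, and it sits exactly where you say ``essentially all the work lies'': the relative lifting statement --- that generators of $A/I_k$ lying over a prescribed generator of $A/I_{k-1}$ are dense in the fibre, with simultaneous norm control of the increments and of the approximants of $c$ --- is never established, and your own discussion correctly explains why the naive perturbation scheme fails (generation is not stable under limits). As it stands the proposal is a plan, not a proof. The paper avoids the intertwining entirely: for $n:=\sup_k\grPre(A/I_k)$, the set $D_k$ of tuples $\vect{a}\in A^{n+1}_\sa$ with $\pi_k(\vect{a})\in\Gen_{n+1}(A/I_k)_\sa$ is the preimage, under the open surjection $A^{n+1}_\sa\to(A/I_k)^{n+1}_\sa$, of a dense $G_\delta$ (\autoref{prp:grSep}), hence itself a dense $G_\delta$; by Baire, $\bigcap_k D_k$ is dense, and every $\vect{a}$ in it satisfies $\pi_k(C^*(\vect{a}))=A/I_k$ for all $k$. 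The only thing left to prove is then a Stone-Weierstra{\ss}-type statement (\autoref{prp:StoneWeier}): a sub-\ca{} surjecting onto every $A/I_k$ equals $A$. No lifting, no control of increments, no convergence of approximants is needed.

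Second, you mislocate the role of the hypothesis. The condition $\bigcup_k\hull(I_k)=\Prim(A)$ is \emph{not} equivalent to $\bigcap_kI_k=0$; the latter only says that $\bigcup_k\hull(I_k)$ is \emph{dense} in $\Prim(A)$. The norm identity $\|a\|=\sup_k\|\pi_k(a)\|$ already holds under that weaker hypothesis (by lower semicontinuity of $J\mapsto\|a+J\|$ on $\Prim(A)$), and it is too weak to carry the argument: for $A=C([0,1])$ and $I_k$ the ideal of functions vanishing on $[0,1-\tfrac{1}{k}]$ one has $\bigcap_kI_k=0$, yet the proper subalgebra $B=\{f\in A:f(0)=f(1)\}$ satisfies $\pi_k(B)=A/I_k$ for every $k$. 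So a certification scheme based solely on $\|a\|=\sup_k\|\pi_k(a)\|$ cannot distinguish $B$ from $A$, and in particular cannot show that your limit tuple generates $A$ rather than such a proper subalgebra. The paper uses the full covering hypothesis through \autoref{prp:charCoverPrim} (equivalence with $\lim_k\|\varphi|_{I_k}\|=0$ for every $\varphi\in A^*$, proved via pure states and a Dini-type argument on the state space) together with a Hahn-Banach argument to obtain \autoref{prp:StoneWeier}. Any completion of your approach would have to import this ingredient; the exact covering condition enters there, not through the norm identity. (You also do not address the passage from $\grPre$ to $\gr$ via unitization, but that is a minor omission by comparison.)
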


The main result of this paper is:

\begin{thmIntro}[\ref{prp:gr_subhom}]
\label{thmD}
Let $A$ be a subhomogeneous \ca{}.
For each $d\geq 1$, set $X_d:=\Prim_d(A)$, the subset of the primitive ideal space of $A$ corresponding to $d$-dimensional irreducible representations.
Then:
\begin{align*}
\gr(A)
&= \max\left\{ \locdim(X_1\times X_1), \max_{d\geq 2} \left\lceil\frac{\locdim(X_d)+1}{2d-2}\right\rceil \right\}.
\end{align*}
\end{thmIntro}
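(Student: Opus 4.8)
The plan is to prove Theorem~\ref{thmD} by induction on $d$, reducing the general subhomogeneous case to the homogeneous computation of \autoref{thmA} (that is, \ref{prp:gr_hom}) by peeling off one matrix size at a time. First I would set up the induction: given a $d$-subhomogeneous \ca{} $A$, let $I\subseteq A$ be the ideal corresponding to the irreducible representations of top dimension $d$, so that $\hull(I)=\Prim_{<d}(A)$ and the quotient $A/I$ is $(d-1)$-subhomogeneous. The ideal $I$ is itself a $d$-homogeneous \ca{} with $\Prim(I)=X_d$, so \autoref{thmA} gives $\gr(I)=\lceil(\locdim(X_d)+1)/(2d-2)\rceil$ when $d\geq 2$ (and $\gr(I)=\locdim(X_1\times X_1)$ when $d=1$). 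By the inductive hypothesis, $\gr(A/I)$ equals the asserted maximum taken only over the indices $1,\dots,d-1$. Thus the content of the induction step is to show that $\gr(A)=\max\{\gr(I),\gr(A/I)\}$ for this particular extension.

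The difficulty is that the generator rank of an extension is in general hard to control: it does not simply equal the maximum of the generator ranks of the ideal and the quotient, since lifting a generating tuple from the quotient while simultaneously achieving a generating tuple on the ideal requires a gluing argument along the boundary $\overline{X_d}\setminus X_d$. This is precisely where Proposition~\ref{prp:CST-gr} (that is, \ref{prp:CST-gr}) should do the decisive work. The idea is to exhaust $\Prim(A)$ by the closed sets on which the fiber dimension is bounded: for the filtration coming from the dimension of irreducible representations, one produces a decreasing sequence of ideals $(I_k)_k$ with $\bigcup_k\hull(I_k)=\Prim(A)$ such that each quotient $A/I_k$ has irreducible representations of uniformly bounded dimension, and then Proposition~\ref{prp:CST-gr} yields $\gr(A)=\sup_k\gr(A/I_k)$. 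Since each $A/I_k$ is subhomogeneous of bounded dimension, the induction reduces the whole computation to finitely many homogeneous pieces controlled by \autoref{thmA}.

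I expect the main obstacle to be the one inequality $\gr(A)\leq\max\{\gr(I),\gr(A/I)\}$ in the induction step, i.e.\ approximating an arbitrary self-adjoint tuple in $A$ by a generating tuple. The reverse inequality $\gr(A)\geq\max\{\gr(I),\gr(A/I)\}$ is comparatively cheap, because the generator rank does not increase when passing to ideals or quotients (the permanence properties recalled in \autoref{sec:gr}), so $\gr(A)\geq\gr(I)$ and $\gr(A)\geq\gr(A/I)$ immediately. For the upper bound, the strategy is to first lift a dense generating tuple from $A/I$, then perturb it on the ideal $I$ to also generate $I$, using that a small perturbation supported near $X_d$ does not destroy the generating property modulo $I$. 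Keeping the perturbation small in norm while ensuring it generates both pieces is exactly the kind of approximate-gluing estimate that the Stone--Weierstra{\ss}-type characterization of generators (the pointwise generation of $M_d$ together with separation of points, as in \autoref{prp:generators_C(X)-alg}) is designed to handle, and the homotopy-extension/embedding techniques alluded to in the homogeneous case reappear here to perform the lift continuously across the boundary. Once the extension formula is established, assembling the final maximum over all $d$ is routine bookkeeping: one simply records that the index-$d$ contribution $\lceil(\locdim(X_d)+1)/(2d-2)\rceil$ (or $\locdim(X_1\times X_1)$ when $d=1$) enters exactly through $\gr(I)$ at each stage.
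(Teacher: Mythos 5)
Your skeleton is right (induction on the top dimension $m$, the homogeneous computation of \autoref{prp:gr_hom} for the pieces, and \autoref{prp:CST-gr} as the decisive tool), but the mechanism by which \autoref{prp:CST-gr} is applied is misdescribed, and the gap is exactly at the point you identify as the main difficulty. You propose to exhaust $\Prim(A)$ by ``closed sets on which the fiber dimension is bounded''; this cannot work, because there are only finitely many dimension values and the union of the closed sets $\Prim_{\leq d'}(A)$ for $d'<m$ is $\hull(I)$, which misses the open top stratum $X_m$ entirely. The correct choice, which is what the paper does, is to exhaust the \emph{open} set $X_m=\Prim_m(B)$ by an increasing sequence of \emph{compact} subsets $Y_k$ (after reducing to a separable $B$, so that $X_m$ is $\sigma$-compact) and to let $I_k\subseteq I$ be the ideal with $\hull(I_k)=Y_k\cup\hull(I)$. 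The quotients $B/I_k$ are still $m$-subhomogeneous, so they are not handled by the induction hypothesis; the point is rather that $Y_k$, being compact, is clopen in $\hull(I_k)$, so that
\[
B/I_k\;\cong\;(I/I_k)\oplus(B/I),
\]
a \emph{direct sum} of an $m$-homogeneous algebra and the $(m-1)$-subhomogeneous quotient. One then needs the direct-sum formula $\gr(C\oplus D)=\max\{\gr(C),\gr(D)\}$ (\autoref{prp:gr_sums}), which is not automatic and is available here only because no nonzero quotient of the $m$-homogeneous summand is isomorphic to a quotient of the $(m-1)$-subhomogeneous one. With that, $\sup_k\gr(B/I_k)=\max\{\gr(I),\gr(B/I)\}$ and \autoref{prp:CST-gr} closes the induction step.

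Your fallback plan for the inequality $\gr(A)\leq\max\{\gr(I),\gr(A/I)\}$ --- lift a generating tuple from $A/I$ and perturb it near $X_m$, gluing across the boundary $\overline{X_m}\setminus X_m$ with the homotopy extension lifting property --- is not carried out and is precisely the hard gluing that the compact-exhaustion argument is designed to avoid: the general extension estimate only gives $\gr(A)\leq\gr(I)+\gr(A/I)+1$, and the HELP machinery of \autoref{sec:hom} is set up for locally trivial $M_d$-bundles over a fixed base, not for an extension whose boundary behavior mixes fibers of different dimensions. You should also record the reduction to the separable case (via the $\sigma$-complete cofinal collections of \autoref{prp:LShom} and \autoref{prp:subcollections}), since \autoref{prp:CST-gr} is a statement about separable \ca{s}.
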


The main application is:

\begin{corIntro}[\ref{prp:gr_ZstableASH}]
\label{corD}
Let $A$ be a nonzero, separable, $\mathcal{Z}$-stable ASH-algebra.
Then $\gr(A)=1$, and so a generic element of $A$ is a generator.
\end{corIntro}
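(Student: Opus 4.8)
The plan is to establish $\gr(A)\geq 1$ and $\gr(A)\leq 1$ separately, the upper bound being the substantial part. For the lower bound, observe that a single self-adjoint element generates a commutative sub-\ca, so $\gr(A)=0$ would force $A$ itself to be commutative; as $A$ is nonzero and $\mathcal{Z}$-stable, $A\cong A\otimes\mathcal{Z}$ is noncommutative, whence $\gr(A)\geq 1$.

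For the upper bound I would use $\mathcal{Z}$-stability to trade topological dimension against matrix size. Since $A$ is a separable ASH-algebra, write $A=\varinjlim_i A_i$ with each $A_i$ subhomogeneous, and --- invoking that every separable subhomogeneous \ca{} is an inductive limit of subhomogeneous \ca{s} with finite-dimensional primitive ideal space --- arrange $L_i:=\locdim(\Prim(A_i))<\infty$ for all $i$. Writing the Jiang--Su algebra as $\mathcal{Z}=\varinjlim_m Z_m$ with $Z_m=Z_{p_m,q_m}$ the prime dimension-drop algebras, whose matrix sizes satisfy $s_m:=\min\{p_m,q_m\}\to\infty$, and using that the minimal tensor product commutes with inductive limits, I obtain
\[
A\cong A\otimes\mathcal{Z}\cong\varinjlim_i\bigl(A_i\otimes\mathcal{Z}\bigr),\qquad A_i\otimes\mathcal{Z}\cong\varinjlim_m\bigl(A_i\otimes Z_m\bigr),
\]
in which every $A_i\otimes Z_m$ is \emph{subhomogeneous}.

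The heart of the matter is to bound $\gr(A_i\otimes Z_m)$ by \autoref{prp:gr_subhom} and let $m\to\infty$. Since $A_i$ and $Z_m$ are type~I, $\Prim(A_i\otimes Z_m)\cong\Prim(A_i)\times\Prim(Z_m)$ with the dimensions of irreducible representations multiplying. Now $\Prim(Z_m)$ is an interval of $p_mq_m$-dimensional representations together with two endpoints of dimensions $p_m$ and $q_m$; hence every irreducible representation of $A_i\otimes Z_m$ has dimension $e\geq s_m$, and each piece $\Prim_e(A_i\otimes Z_m)$ is a finite union of products $\Prim_d(A_i)\times\Prim_f(Z_m)$ with $f\in\{p_m,q_m,p_mq_m\}$, so $\locdim(\Prim_e(A_i\otimes Z_m))\leq L_i+1$ because the interval factor adds at most one to the local dimension. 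In particular $\Prim_1(A_i\otimes Z_m)=\emptyset$, so the $d=1$ term of \autoref{prp:gr_subhom} drops out and all remaining terms are dominated by $\lceil (L_i+2)/(2s_m-2)\rceil$, which equals $1$ as soon as $s_m\geq(L_i+4)/2$. Passing to the cofinal tail $\{m:s_m\geq (L_i+4)/2\}$, all building blocks of $A_i\otimes\mathcal{Z}$ have generator rank at most $1$, so $\gr(A_i\otimes\mathcal{Z})\leq 1$ as the generator rank does not increase under inductive limits; applying this once more to $A\cong\varinjlim_i(A_i\otimes\mathcal{Z})$ yields $\gr(A)\leq 1$. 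Together with the lower bound, $\gr(A)=1$. Finally, $\gr(A)=1$ says precisely that the generating self-adjoint pairs are dense in $A_\sa^2$; under the homeomorphism $A_\sa^2\cong A$, $(x_0,x_1)\mapsto x_0+ix_1$, the single generators of $A$ are then dense, and since they form a $G_\delta$-set they constitute a dense $G_\delta$, i.e.\ a generic element of $A$ is a generator.

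I expect the main obstacle to lie in two bookkeeping points rather than in any single hard estimate: first, justifying the reduction to building blocks $A_i$ with finite-dimensional spectrum, which is essential since a $\mathcal{Z}$-stable ASH-algebra may itself have infinite-dimensional spectrum (e.g.\ $C([0,1]^{\NN})\otimes\mathcal{Z}$); and second, controlling $\Prim_e$ and $\locdim$ under the tensor product $A_i\otimes Z_m$, namely that tensoring with the interval-based $Z_m$ raises local dimension by at most one while multiplying every representation dimension by at least $s_m$. This is exactly the regime in which the $2d-2$ denominator of \autoref{prp:gr_subhom} forces the quotient to fall to $1$, so that $\mathcal{Z}$-stability drives the generator rank down to one.
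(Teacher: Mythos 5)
Your proposal is correct and follows essentially the same route as the paper: reduce to building blocks with finite-dimensional spectrum (this is exactly the Ng--Winter theorem on finitely generated subhomogeneous algebras, which is the citation you would need for your black-boxed reduction), tensor with prime dimension-drop algebras $Z_{p,q}$, observe via \autoref{prp:gr_subhom} that large matrix sizes force the generator rank of $A_i\otimes Z_{p,q}$ down to one, and conclude by permanence under inductive limits together with the lower bound for noncommutative algebras. The only differences are cosmetic: you spell out the spectrum computation for $A_i\otimes Z_{p,q}$ that the paper leaves implicit, and your final "generic generator" step should be routed through the unitization as in \autoref{rmk:gr1} since $\gr(A)=\grPre(\widetilde{A})$ concerns dense generating pairs in $\widetilde{A}^2_\sa$ rather than $A^2_\sa$.
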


It was shown in \cite[Theorem~3.8]{ThiWin14GenZStableCa} that every \emph{unital}, separable, $\mathcal{Z}$-stable \ca{} is singly generated.
We note that \autoref{corD} does not require unitality.
In particular, \autoref{corD} implies that certain \ca{s} are singly generated that were not considered in \cite{ThiWin14GenZStableCa}.

Together with the main result of \cite{Thi20arX:grZstableRR0}, we obtain the following consequence:

\begin{corIntro}[{\cite[Corollary~5.7]{Thi20arX:grZstableRR0}}]
\label{corE}
Let $A$ be a unital, separable, simple, nuclear, $\mathcal{Z}$-stable \ca{} satisfying the UCT. 
Then $A$ has generator rank one. 
In particular, a generic element in $A$ is a generator.
\end{corIntro}

\subsection*{Acknowledgments}

I want to thank the anonymous referee for his thorough reading of a former version of this work and for providing detailed feedback that helped greatly to improve the paper.   

\subsection*{Notation}

We set $\NN:=\{0,1,2\ldots\}$.
Given a \ca{} $A$, we use $A_\sa$ to denote the set of self-adjoint elements in $A$.
We denote by $\widetilde{A}$ the minimal unitization of $A$.
By an ideal in a \ca{} we mean a closed, two-sided ideal.
We write $M_d$ for the \ca{} of $d$-by-$d$ matrices $M_d(\mathbb{C})$.

Given $a,b\in A$, and $\varepsilon>0$, we write $a=_\varepsilon b$ if $\|a-b\|<\varepsilon$.
Given $a\in A$ and $G\subseteq A$, we write $a\in_\varepsilon G$ if there exists $b\in G$ with $a=_\varepsilon b$.
We use bold letters to denote tuples, for example $\vect{a}=(a_1,\ldots,a_n)\in A^n$.
Given $\vect{a},\vect{b}\in A^n$, we write $\vect{a}=_\varepsilon\vect{b}$ if $a_j=_\varepsilon b_j$ for $j=1,\ldots,n$.
We use $C^*(\vect{a})$ to denote the sub-\ca{} of $A$ generated by the elements of $\vect{a}$.
We write $A^n_\sa$ for $(A_\sa)^n$, the space of $n$-tuples of self-adjoint elements in $A$.

\section{The generator rank and its precursor}
\label{sec:gr}

In this section, we briefly recall the definition and basic properties of the generator rank $\gr$ and its predecessor $\grPre$ from \cite{Thi21GenRnk}.

\begin{dfn}[{\cite[Definitions~2.1, 3.1]{Thi21GenRnk}}]
\label{dfn:gr}
Let $A$ be a \ca{}.
We define $\grPre(A)$ as the smallest integer $n\geq 0$ such that for every $\vect{a}\in A^{n+1}_\sa$, $\varepsilon>0$ and $c\in A$, there exists $\vect{b}\in A^{n+1}_\sa$ such that
\[
\vect{b}=_\varepsilon\vect{a}, \quad\text{ and }\quad
c\in_\varepsilon C^*(\vect{b}).
\]
If no such $n$ exists, we set $\grPre(A)=\infty$.
The \emph{generator rank} of $A$ is $\gr(A):=\grPre(\widetilde{A})$.
\end{dfn}

We use $\Gen_{n}(A)_\sa$ to denote the set of tuples $\vect{a}\in A^n_\sa$ that generate $A$ as a \ca.
For separable \ca{s}, the generator rank and its predecessor can be described by the denseness of such tuples.

\begin{thm}[{\cite[Theorem~3.4]{Thi21GenRnk}}]
\label{prp:grSep}
Let $A$ be a \emph{separable} \ca{} and $n\in\NN$.
Then:
\begin{enumerate}
\item
$\grPre(A)\leq n$ if and only if $\Gen_{n+1}(A)_\sa\subseteq A^{n+1}_\sa$ is a dense $G_\delta$-subset.
\item
$\gr(A)\leq n$ if and only if $\Gen_{n+1}(\widetilde{A})_\sa\subseteq \widetilde{A}^{n+1}_\sa$ is a dense $G_\delta$-subset.
\end{enumerate} 
\end{thm}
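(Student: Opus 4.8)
The plan is to exploit separability to turn the single-generation condition into a countable intersection of open conditions, and then to extract both the density and the $G_\delta$ property from the Baire category theorem together with the definition of $\grPre$. Throughout I work inside the complete metric space $A^{n+1}_\sa$, which is a closed real-linear subspace of the Banach space $A^{n+1}$.

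First I would record the relevant open sets. Fix a countable dense subset $\{c_k\}_{k\in\NN}$ of $A$, and for $c\in A$ and $\varepsilon>0$ put
\[
U_{c,\varepsilon} := \{ \vect{b}\in A^{n+1}_\sa : c\in_\varepsilon C^*(\vect{b}) \}.
\]
The first claim is that each $U_{c,\varepsilon}$ is open. Indeed, if $\vect{b}\in U_{c,\varepsilon}$, choose a $*$-polynomial $p$ with no constant term in $n+1$ self-adjoint variables such that $\|c - p(\vect{b})\|<\varepsilon$; since polynomial evaluation is norm-continuous, every $\vect{b}'$ sufficiently close to $\vect{b}$ still satisfies $\|c-p(\vect{b}')\|<\varepsilon$, whence $\vect{b}'\in U_{c,\varepsilon}$. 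Next, since $C^*(\vect{b})$ is closed and $\{c_k\}$ is dense, a tuple $\vect{b}$ generates $A$ exactly when it approximately contains each $c_k$ to every precision, that is,
\[
\Gen_{n+1}(A)_\sa = \bigcap_{k\in\NN}\ \bigcap_{m\geq 1} U_{c_k,1/m}.
\]
This already exhibits $\Gen_{n+1}(A)_\sa$ as a $G_\delta$-set, with no hypothesis on $\grPre(A)$.

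For the forward implication of (1) it then only remains to establish density. Assuming $\grPre(A)\leq n$, I claim each $U_{c,\varepsilon}$ is dense: given $\vect{a}\in A^{n+1}_\sa$ and $\delta>0$, apply the definition of $\grPre$ with error $\varepsilon':=\min\{\varepsilon,\delta\}$ to obtain $\vect{b}=_{\varepsilon'}\vect{a}$ with $c\in_{\varepsilon'}C^*(\vect{b})$; then $\vect{b}\in U_{c,\varepsilon}$ and $\vect{b}$ lies within $\delta$ of $\vect{a}$. By the Baire category theorem the countable intersection displayed above is dense, so $\Gen_{n+1}(A)_\sa$ is a dense $G_\delta$. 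Conversely, if $\Gen_{n+1}(A)_\sa$ is dense, then for any $\vect{a},\varepsilon,c$ a generating tuple $\vect{b}=_\varepsilon\vect{a}$ satisfies $c\in C^*(\vect{b})$, hence $c\in_\varepsilon C^*(\vect{b})$, verifying $\grPre(A)\leq n$. Finally, (2) follows at once by applying (1) to the separable \ca{} $\widetilde{A}$, since $\gr(A)=\grPre(\widetilde{A})$ by definition.

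The main obstacle is the openness of $U_{c,\varepsilon}$, that is, the stability of approximate membership in a generated sub-\ca{} under perturbation of the generators; this is the one genuinely nontrivial continuity input, and it is precisely where the polynomial-approximation argument enters. Everything else is a formal interplay between separability, which reduces generation to countably many conditions, and Baire category, which upgrades density of each individual condition to density of their intersection.
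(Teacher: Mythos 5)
Your argument is correct, and it is essentially the standard proof: the paper itself does not prove this statement but imports it from \cite[Theorem~3.4]{Thi12arX:GenRnk}, where the argument is exactly this combination of (i) writing $\Gen_{n+1}(A)_\sa$ as a countable intersection of the open sets $U_{c_k,1/m}$ via a countable dense subset and polynomial approximation, and (ii) the Baire category theorem in the complete metric space $A^{n+1}_\sa$. Nothing is missing; in particular you correctly observe that the $G_\delta$ property holds unconditionally for separable $A$ and that only the density direction uses the hypothesis $\grPre(A)\leq n$.
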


\begin{rmk}
\label{rmk:gr1}
Let $A$ be a separable \ca.
If $A$ has generator rank at most one, then the set of (non-self-adjoint) generators in $A$ is a dense $G_\delta$-subset;
see \cite[Remark~3.7]{Thi21GenRnk}.
If $A$ is unital, then the converse also holds:
We have $\gr(A)\leq 1$ if and only if a generic element in $A$ is a generator.
\end{rmk}

The connection between $\grPre, \gr$ and the real rank is summarized by the next result, which combines Proposition~3.12 and Theorem~3.13 in \cite{Thi21GenRnk}.
In \autoref{prp:gr_subhom}, we show that $\grPre$ and $\gr$ agree for subhomogeneous \ca{s}. 
In general, however, it is unclear if $\grPre=\gr$;
see \cite[Question~3.16]{Thi21GenRnk}.

\begin{prp}
\label{prp:grPre_gr}
Let $A$ be a \ca{}.
Then
\[
\max \big\{ \rr(A), \grPre(A) \big\}
= \gr(A)
\leq \grPre(A)+1.
\]
\end{prp}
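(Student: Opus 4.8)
The plan is to reduce everything to the unital algebra $\widetilde{A}$, since $\gr(A)=\grPre(\widetilde{A})$ by \autoref{dfn:gr}, and to isolate one structural fact that bridges the generation condition defining $\grPre$ and the real-rank condition defining $\rr$. Namely, for a unital \ca{} $B$ and a self-adjoint tuple $\vect{b}$, the positive element $\sum_j b_j^2$ always lies in $C^*(\vect{b})$, and one has the dichotomy that $1_B\in C^*(\vect{b})$ \emph{if and only if} $\sum_j b_j^2$ is invertible, together with the robust one-sided version that if $1\in_\varepsilon C^*(\vect{b})$ with $\varepsilon<1$ then $\sum_j b_j^2$ is already invertible. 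The forward implication is functional calculus applied to $\sum_j b_j^2$: its spectrum is bounded away from $0$, so a continuous function equal to $1$ there and vanishing at $0$ produces $1_B$ inside $C^*(\vect{b})$. The converse follows by testing against unit vectors $\xi$ in the universal representation that nearly annihilate every $b_j$ when $\sum_j b_j^2$ fails to be invertible, since then every constant-term-free polynomial in $\vect{b}$ nearly annihilates $\xi$, contradicting $1\in_\varepsilon C^*(\vect{b})$.

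With this lemma I would first establish the two lower bounds for $\gr(A)$. For $\rr(A)\le\gr(A)$, applying \autoref{dfn:gr} for $\widetilde{A}$ with target $c=1$ perturbs an arbitrary self-adjoint tuple in $\widetilde{A}$ into one $\vect{b}$ with $1\in_\varepsilon C^*(\vect{b})$, hence (by the lemma) with $\sum_j b_j^2$ invertible; thus the self-adjoint tuples with invertible square-sum are dense, giving $\rr(\widetilde{A})\le\gr(A)$, and $\rr(A)=\rr(\widetilde{A})$ since real rank is invariant under minimal unitization. For $\grPre(A)\le\gr(A)=\grPre(\widetilde{A})$ I would invoke the permanence property that $\grPre$ does not increase when passing to an ideal, applied to $A\subseteq\widetilde{A}$.

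The substantive direction is $\gr(A)\le n:=\max\{\rr(A),\grPre(A)\}$, which I would prove by a two-stage perturbation. Given $\vect{a}\in\widetilde{A}^{n+1}_\sa$, a target $c=c_0+\mu 1$ with $c_0\in A$, and $\varepsilon>0$: in the first stage use $\rr(\widetilde{A})=\rr(A)\le n$ to perturb $\vect{a}$ slightly to $\vect{b}$ with $\sum_j b_j^2$ invertible, so that $1\in C^*(\vect{b})$, and write $b_j=b_j^0+\lambda_j 1$ with $b_j^0\in A$. In the second stage apply $\grPre(A)\le n$ to the tuple $\vect{b}^0=(b_0^0,\ldots,b_n^0)\in A^{n+1}_\sa$ with target $c_0$, obtaining an arbitrarily small perturbation $\vect{d}\in A^{n+1}_\sa$ with $c_0\in_\delta C^*(\vect{d})$, and reassemble $b_j':=d_j+\lambda_j 1$. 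Because invertibility is an open condition, for $\delta$ small the sum $\sum_j (b_j')^2$ remains invertible, so $1\in C^*(\vect{b}')$; then $d_j=b_j'-\lambda_j 1\in C^*(\vect{b}')$, whence $C^*(\vect{d})\subseteq C^*(\vect{b}')$ and $c=c_0+\mu 1\in_\delta C^*(\vect{b}')$. What makes the two stages compatible is precisely the openness of the leading condition, which lets the small generation-perturbation preserve the presence of the unit secured in the first stage.

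Finally, given the equality just proved, the estimate $\gr(A)\le\grPre(A)+1$ is equivalent to the inequality $\rr(A)\le\grPre(A)+1$, and this is the step I expect to be the main obstacle. For separable $A$ (the general case by a standard reduction to separable subalgebras), \autoref{prp:grSep} provides a dense supply of $(n+1)$-tuples generating $A$, and for such a tuple $\vect{b}^0$ the element $\sum_j (b_j^0)^2$ is \emph{strictly} positive in $A$, since a state annihilating it would annihilate all of $C^*(\vect{b}^0)=A$. The idea is then to adjoin a single further self-adjoint coordinate, arrange its scalar part to be nonzero so as to cover the character $\widetilde{A}\to\mathbb{C}$, and perturb it to clear the remaining common zeros of the tuple, producing a tuple with invertible square-sum and hence $\rr(A)\le n+1$. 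The difficulty, and the reason one extra generator suffices, lies in controlling this common-zero locus using the strong separation properties of a generating tuple, while carefully tracking the scalar parts $\lambda_j$ that entangle $A$ with the adjoined unit; this non-unital bookkeeping is the delicate heart of the argument.
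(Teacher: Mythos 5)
The paper does not prove this proposition itself; it is quoted from Proposition~3.12 and Theorem~3.13 of \cite{Thi12arX:GenRnk}. Your argument for the equality $\max\{\rr(A),\grPre(A)\}=\gr(A)$ is correct and is essentially the standard one: the dichotomy ``$1\in C^*(\vect{b})$ if and only if $\sum_j b_j^2$ is invertible'' (together with its stable form for $1\in_\varepsilon C^*(\vect{b})$ with $\varepsilon<1$, proved via a state annihilating each $b_j^2$ and Cauchy--Schwarz) is exactly the bridge between the two ranks; the two lower bounds follow as you say (the second from the ideal part of \autoref{prp:gr_idealQuotExt}); and the two-stage perturbation for $\gr(A)\le\max\{\rr(A),\grPre(A)\}$ is sound, because invertibility of the square-sum is open and, once $1\in C^*(\vect{b}')$, the scalar parts $\lambda_j1$ can be stripped off so that $C^*(\vect{d})\subseteq C^*(\vect{b}')$.

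The genuine gap is the final inequality $\gr(A)\le\grPre(A)+1$. You reduce it to $\rr(A)\le\grPre(A)+1$ and then only outline a strategy, explicitly conceding that controlling the ``common-zero locus'' of the shifted generating tuple is an unresolved obstacle; as written, this step is not proved. (The strategy can in fact be completed: if $\vect{b}^0$ generates $A$ and a state $\varphi$ of $\widetilde{A}$ annihilates $\sum_j(b_j^0+\lambda_j1)^2$, then the GNS vector is a joint eigenvector of a generating family, so $\pi_\varphi$ is one-dimensional and $\varphi$ is a character; hence the locus consists of at most two characters, which a small perturbation of one extra self-adjoint coordinate avoids. None of this is in your write-up, and the separable-to-general reduction is also only asserted.) More to the point, you overlooked the short route: $\widetilde{A}/A\cong\mathbb{C}$ and $\grPre(\mathbb{C})=0$, so the extension estimate of \autoref{prp:gr_idealQuotExt} gives
\[
\gr(A)=\grPre(\widetilde{A})\le\grPre(A)+\grPre(\widetilde{A}/A)+1=\grPre(A)+1
\]
directly, with no real-rank argument and no separability hypothesis.
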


We will frequently use the following permanence properties of $\grPre$ and $\gr$, which were shown in Propositions~2.2, 2.7, 2.9 and Theorem~6.2 in \cite{Thi21GenRnk}.

\begin{thm}
\label{prp:gr_idealQuotExt}
Let $A$ be a \ca{}, and let $I\subseteq A$ be an ideal.
Then:
\[
\max\big\{ \grPre(I),\grPre(A/I) \big\} \leq \grPre(A) \leq \grPre(I)+\grPre(A/I)+1,
\]
and
\[
\max\big\{ \gr(I),\gr(A/I) \big\} \leq \gr(A) \leq \gr(I)+\gr(A/I)+1.
\]
\end{thm}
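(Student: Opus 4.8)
The plan is to establish both chains of inequalities for $\grPre$ first, and then transfer them to $\gr$ by passing to unitizations. The quotient lower bound $\grPre(A/I)\leq\grPre(A)$ is the routine direction: given $\grPre(A)\leq n$, a tuple $\vect{a}\in(A/I)^{n+1}_\sa$, an element $c\in A/I$ and $\varepsilon>0$, I would lift $\vect{a}$ and $c$ to self-adjoint elements of $A$ with controlled norm, apply the defining property of $\grPre(A)\leq n$ inside $A$, and push the resulting tuple back down through the quotient map $\pi$. Since $\pi$ is a contractive $*$-homomorphism with $\pi(C^*(\vect{b}))=C^*(\pi(\vect{b}))$, both the closeness and the relative generation are preserved.

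The ideal lower bound $\grPre(I)\leq\grPre(A)$ is genuinely harder and is the first main obstacle. Starting from $\vect{a}\in I^{n+1}_\sa$, $c\in I$ and $\varepsilon>0$, I would regard these inside $A$ and use $\grPre(A)\leq n$ to obtain $\vect{b}_0\in A^{n+1}_\sa$ close to $\vect{a}$ with $c\in_{\varepsilon'}C^*(\vect{b}_0)$; the difficulty is that $\vect{b}_0$ need not lie in $I$. To remedy this I would fix a quasicentral approximate unit $(e_\lambda)$ of $I$ in $A$ and pass to the compressed tuple $\vect{b}_\lambda:=e_\lambda^{1/2}\vect{b}_0e_\lambda^{1/2}\in I^{n+1}_\sa$. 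Since $\vect{a}$ and $c$ already lie in $I$, one gets $\vect{b}_\lambda\to\vect{a}$ and $e_\lambda^{1/2}ce_\lambda^{1/2}\to c$, and quasicentrality is exactly what lets one compare $e_\lambda^{1/2}w(\vect{b}_0)e_\lambda^{1/2}$ with the corresponding word $w(\vect{b}_\lambda)$ and conclude $c\in_\varepsilon C^*(\vect{b}_\lambda)$ for $\lambda$ large. Controlling these cross terms carefully is the delicate point.

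For the extension upper bound $\grPre(A)\leq\grPre(I)+\grPre(A/I)+1$, the principal obstacle, write $m:=\grPre(I)$ and $n:=\grPre(A/I)$ and split an input $\vect{a}\in A^{m+n+2}_\sa$ as $(\vect{a}',\vect{a}'')$ with $\vect{a}'\in A^{m+1}_\sa$ and $\vect{a}''\in A^{n+1}_\sa$. First I would work modulo $I$: applying $\grPre(A/I)\leq n$ to $\pi(\vect{a}'')$ and $\pi(c)$ and lifting, I obtain $\vect{b}''\in A^{n+1}_\sa$ close to $\vect{a}''$ and some $p\in C^*(\vect{b}'')$ with $c-p\in_{\varepsilon'}I$; choose $i\in I$ with $c-p=_{\varepsilon'}i$. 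It then remains to generate $i$ from a perturbation of the first block: using a quasicentral approximate unit of $I$ to split $\vect{a}'$ into a part essentially outside $I$ and a part inside $I$, I would perturb only the inside part by invoking $\grPre(I)\leq m$, producing $\vect{b}'$ close to $\vect{a}'$ with $i\in_{\varepsilon'}C^*(\vect{b}')$. Combining gives $c=_\varepsilon p+i\in C^*(\vect{b}',\vect{b}'')=C^*(\vect{b})$. The extra coordinate (the ``$+1$'') provides the independent room needed to run the quotient and ideal perturbations without interference, and making this combination precise is the technical heart of the argument.

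Finally, I would transfer everything to $\gr$ using $\gr(\freeVar)=\grPre(\widetilde{\freeVar})$ together with the identification $\widetilde{A}/I=\widetilde{A/I}$, which holds because $I\subseteq A$. Applying the $\grPre$ quotient bound to $I\subseteq\widetilde{A}$ gives $\gr(A/I)=\grPre(\widetilde{A}/I)\leq\grPre(\widetilde{A})=\gr(A)$, and applying the $\grPre$ extension bound to $0\to I\to\widetilde{A}\to\widetilde{A}/I\to 0$ together with $\grPre(I)\leq\gr(I)$ from \autoref{prp:grPre_gr} yields $\gr(A)\leq\gr(I)+\gr(A/I)+1$. For the remaining lower bound $\gr(I)\leq\gr(A)$ I would combine the $\grPre$ ideal bound applied to $I\subseteq\widetilde{A}$, giving $\grPre(I)\leq\gr(A)$, with monotonicity of the real rank under ideals, $\rr(I)\leq\rr(A)\leq\gr(A)$; since $\gr(I)=\max\{\rr(I),\grPre(I)\}$ by \autoref{prp:grPre_gr}, the claim follows.
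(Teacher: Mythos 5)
This theorem is not actually proved in the present paper --- it is imported wholesale from Propositions~2.2, 2.7, 2.9 and Theorem~6.2 of \cite{Thi12arX:GenRnk} --- so your proposal has to stand on its own. The routine parts do: the quotient inequality by self-adjoint lifting and pushing back down through $\pi$ (using $\pi(C^*(\vect{b}))=C^*(\pi(\vect{b}))$), and the bookkeeping that transfers the $\grPre$-inequalities to $\gr$ via unitizations, with the one caveat that $\widetilde{A}/I$ is the \emph{forced} unitization $(A/I)^+$ rather than $\widetilde{A/I}$ when $A/I$ is unital but $A$ is not; this is repaired by $\gr(B)=\gr(B^+)$ (\cite[Lemma~6.1]{Thi12arX:GenRnk}), exactly as in the proofs of \autoref{prp:StoneWeier} and \autoref{prp:CST-gr}.

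The two hard inequalities, however, each contain a genuine gap. For $\grPre(I)\leq\grPre(A)$: compression by $e_\lambda^{1/2}$ is not approximately multiplicative, even for a quasicentral approximate unit. For a monomial of degree $k$ one has $w(\vect{b}_\lambda)=e_\lambda^{1/2}b_{i_1}e_\lambda b_{i_2}e_\lambda\cdots e_\lambda b_{i_k}e_\lambda^{1/2}$, and replacing each interior factor $e_\lambda$ by $1$ costs roughly $\|(e_\lambda-1)b_{i_j}\|$ per factor; since $b_{i_j}$ is only $\delta$-close to $I$, this is about $\delta+o_\lambda(1)$, so the total discrepancy between $p(\vect{b}_\lambda)$ and $p(\vect{b}_0)$ is of order $C(p)\cdot(\delta+o_\lambda(1))$, where $C(p)$ depends on the degree and coefficient size of a polynomial $p$ witnessing $c\in_\delta C^*(\vect{b}_0)$. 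But $p$ is only produced \emph{after} $\delta$ and $\vect{b}_0$ have been fixed, so $C(p)\delta$ cannot be made small; this is precisely the circularity that makes the generator rank subtle (nearby tuples need not generate nearby subalgebras), and the sketch does not break it. The extension bound has a parallel problem at the step ``perturb only the inside part \dots{} producing $\vect{b}'$ with $i\in_{\varepsilon'}C^*(\vect{b}')$'': if $\vect{b}'=\vect{a}'_{\mathrm{out}}+\vect{c}'$ with $i\in_{\varepsilon'}C^*(\vect{c}')$, there is no mechanism for recovering $\vect{c}'$ (hence $i$) inside $C^*(\vect{b}')$, or even inside $C^*(\vect{b}',\vect{b}'')$, because a sum of generators does not remember its summands. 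As they stand, the two inequalities that carry all the content are exactly the ones the sketch does not reach, and each needs a substantially different idea.
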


Recall that a \ca{} $A$ is said to be \emph{approximated} by sub-\ca{s} $A_\lambda\subseteq A$ if for every finite subset $F\subseteq A$ and $\varepsilon>0$ there is $\lambda$ such that $a\in_\varepsilon A_\lambda$ for each $a\in F$.
We do not require the subalgberas to be nested.
Thus, while $\bigcup_\lambda A_\lambda$ is a dense subset of $A$, it is not necessarily a subalgbra.
The next result combines Propositions~2.3, 2.4, and Theorem~6.3 in \cite{Thi21GenRnk}.

\begin{thm}
\label{prp:gr_approx}
Let $A$ be a \ca{} that is approximated by sub-\ca{s} $A_\lambda\subseteq A$ and let $n\in\NN$.
If $\grPre(A_\lambda)\leq n$ for each $\lambda$, then $\grPre(A)\leq n$.
Analogously, if $\gr(A_\lambda)\leq n$ for each $\lambda$, then $\gr(A)\leq n$.

Moreover, if $A=\varinjlim_{j} A_j$ is an inductive limit, then
\[
\grPre(A)\leq\liminf_{j} \grPre(A_j), \quad\text{ and }\quad
\gr(A)\leq\liminf_{j} \gr(A_j).
\]
\end{thm}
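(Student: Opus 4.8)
The plan is to reduce all four inequalities to a single statement about $\grPre$ under approximation, namely: if a \ca{} $A$ is approximated by sub-\ca{s} $A_\lambda$ with $\grPre(A_\lambda)\leq n$ for every $\lambda$, then $\grPre(A)\leq n$. The assertion about $\gr$ will follow by unitizing, and the two inductive-limit bounds will follow by extracting a suitable subsequence and invoking the quotient permanence from \autoref{prp:gr_idealQuotExt}.

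For the core statement I would argue directly from \autoref{dfn:gr}. Fix $\vect{a}\in A^{n+1}_\sa$, $\varepsilon>0$ and $c\in A$. Since $A$ is approximated by the $A_\lambda$, I can choose a single $\lambda$ and elements of $A_\lambda$ approximating the finite set $\{a_1,\ldots,a_{n+1},c\}$ to within $\varepsilon/2$; replacing the approximants of the $a_j$ by their self-adjoint parts (which only improves the estimate, as each $a_j$ is self-adjoint) yields $\vect{a}'\in (A_\lambda)^{n+1}_\sa$ with $\vect{a}'=_{\varepsilon/2}\vect{a}$ and $c'\in A_\lambda$ with $c'=_{\varepsilon/2}c$. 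Applying $\grPre(A_\lambda)\leq n$ to $\vect{a}'$, $c'$ and tolerance $\varepsilon/2$ produces $\vect{b}\in (A_\lambda)^{n+1}_\sa$ with $\vect{b}=_{\varepsilon/2}\vect{a}'$ and $c'\in_{\varepsilon/2}C^*(\vect{b})$. The point that makes this work despite the $A_\lambda$ being neither nested nor having a subalgebra as union is that the definition of $\grPre$ is purely approximate, and that $C^*(\vect{b})$ is intrinsic: computed inside $A_\lambda$ it coincides with the sub-\ca{} it generates in $A$. Two applications of the triangle inequality then give $\vect{b}=_\varepsilon\vect{a}$ and $c\in_\varepsilon C^*(\vect{b})$ with $\vect{b}\in A^{n+1}_\sa$, which is exactly what $\grPre(A)\leq n$ demands.

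To pass from $\grPre$ to $\gr$, I would note that whenever $A_\lambda\subseteq A$ the minimal unitizations satisfy $\widetilde{A_\lambda}\subseteq\widetilde{A}$ as unital sub-\ca{s} sharing the adjoined unit, and that approximation of $A$ by the $A_\lambda$ lifts to approximation of $\widetilde{A}$ by the $\widetilde{A_\lambda}$ (an element $a+\mu 1$ is approximated by $x+\mu 1$ whenever $x\in A_\lambda$ approximates $a$, and the $C^*$-norm on $\widetilde{A}$ restricts to that on $A$). Since $\gr(A_\lambda)=\grPre(\widetilde{A_\lambda})\leq n$, the core statement applied to $\widetilde{A}$ and the family $\widetilde{A_\lambda}$ gives $\grPre(\widetilde{A})\leq n$, that is, $\gr(A)\leq n$.

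Finally, for an inductive limit $A=\varinjlim_j A_j$, write $m:=\liminf_j\grPre(A_j)$; there is nothing to prove if $m=\infty$, so assume $m$ finite and choose a subsequence $(j_k)$ with $\grPre(A_{j_k})\leq m$. Let $\psi_{j}\colon A_j\to A$ denote the canonical maps. The images $\psi_{j_k}(A_{j_k})$ are closed sub-\ca{s} of $A$ whose union is dense, so they approximate $A$; and since each is a quotient of $A_{j_k}$, the quotient inequality of \autoref{prp:gr_idealQuotExt} gives $\grPre(\psi_{j_k}(A_{j_k}))\leq\grPre(A_{j_k})\leq m$. The core statement then yields $\grPre(A)\leq m$, and the identical argument with $\gr$ in place of $\grPre$ — using the $\gr$-version of the quotient inequality from \autoref{prp:gr_idealQuotExt} together with the approximation statement for $\gr$ established above — gives $\gr(A)\leq\liminf_j\gr(A_j)$. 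The only genuine subtlety throughout is that the building blocks $A_j$ need not embed into $A$, only their images do, which is precisely why the quotient permanence property is the essential ingredient for the inductive-limit bounds.
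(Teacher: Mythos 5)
Your verification of the $\grPre$ approximation statement is correct: the triangle-inequality argument inside a single $A_\lambda$, using that $C^*(\vect{b})$ is the same whether computed in $A_\lambda$ or in $A$, is exactly the right (and the standard) argument, and your reduction of the inductive-limit bounds to the approximation statement via the quotient inequality of \autoref{prp:gr_idealQuotExt} and the nestedness of the images $\psi_{j_k}(A_{j_k})$ is also sound. (For the record, the paper does not reprove this theorem; it cites Propositions~2.3, 2.4 and Theorem~6.3 of \cite{Thi12arX:GenRnk}.)

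There is, however, a genuine gap in your passage from $\grPre$ to $\gr$. You claim that $\widetilde{A_\lambda}\subseteq\widetilde{A}$ as unital sub-\ca{s} sharing the adjoined unit, and that approximation of $A$ by the $A_\lambda$ lifts to approximation of $\widetilde{A}$ by the $\widetilde{A_\lambda}$. This fails whenever $A$ is nonunital but some $A_\lambda$ is unital: in that case $\widetilde{A_\lambda}=A_\lambda\subseteq A$, and the unit of $\widetilde{A}$ lies at distance exactly $1$ from $A$ (the character $\widetilde{A}\to\widetilde{A}/A\cong\mathbb{C}$ is contractive and kills $A$), so the family $\{\widetilde{A_\lambda}\}$ does not approximate $\widetilde{A}$ at all. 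This is not an exotic configuration --- it is precisely the situation of a nonunital AF-algebra approximated by its finite-dimensional (hence unital) subalgebras, which is the motivating application stated in the introduction. The repair is to approximate $\widetilde{A}$ by the algebras $B_\lambda:=A_\lambda+\mathbb{C}1_{\widetilde{A}}$ instead; but when $A_\lambda$ is unital, $B_\lambda$ is the \emph{forced} unitization $A_\lambda^+\cong A_\lambda\oplus\mathbb{C}$ rather than $\widetilde{A_\lambda}$, and to conclude $\grPre(B_\lambda)\leq n$ one needs the nontrivial fact that $\gr(B^+)=\gr(B)$ for every \ca{} $B$ --- this is Lemma~6.1 of \cite{Thi12arX:GenRnk}, which the present paper invokes later in the proof of \autoref{prp:CST-gr}. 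The cheap bounds available here (for instance the extension inequality of \autoref{prp:gr_idealQuotExt}) only give $\grPre(A_\lambda\oplus\mathbb{C})\leq n+1$, which is not enough. Since your proof of the $\gr$ inductive-limit bound rests on the $\gr$ approximation statement, the gap propagates there as well.
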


\begin{thm}[{\cite[Theorem~5.6]{Thi21GenRnk}}]
\label{prp:gr_abln}
Let $X$ be a locally compact, Hausdorff space.
Then
\[
\grPre(C_0(X))
= \gr(C_0(X))
= \locdim(X\times X).
\]
\end{thm}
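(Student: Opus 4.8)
The plan is to convert generation of $C_0(X)$ into a topological embedding problem and then to reduce the computation of $\grPre$ to a sharp ``approximation by embeddings'' theorem controlled by the covering dimension of the product $X \times X$. First I would record a Stone--Weierstra{\ss} criterion for generation. Passing to the one-point compactification $X^+ = X \cup \{\infty\}$, a self-adjoint tuple $\vect{f} = (f_1,\dots,f_m) \in C_0(X)^m_\sa$ extends to a continuous map $\widetilde{F} \colon X^+ \to \mathbb{R}^m$ with $\widetilde{F}(\infty) = 0$, and one checks that $C^*(\vect{f}) = C_0(X)$ precisely when the $f_j$ together with the unit separate the points of $X^+$, i.e.\ when $\widetilde{F}$ is injective and hence (as $X^+$ is compact) a topological embedding. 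Thus $\vect{f} \in \Gen_m(C_0(X))_\sa$ if and only if $\widetilde{F}$ embeds $X^+$ into $\mathbb{R}^m$ subject to the constraint $\infty \mapsto 0$. For separable (second countable) $X$, \autoref{prp:grSep} then reduces the inequality $\grPre(C_0(X)) \leq n$ to the statement that these constrained embeddings $X^+ \to \mathbb{R}^{n+1}$ are dense in $C_0(X,\mathbb{R}^{n+1}) = C_0(X)^{n+1}_\sa$. The passage to a general locally compact, Hausdorff $X$ I would handle via the permanence properties: the lower bound descends through the quotients $C_0(X) \twoheadrightarrow C(K)$ for compact $K \subseteq X$ using \autoref{prp:gr_idealQuotExt}, while the upper bound is obtained by approximating $C_0(X)$ by the ideals $C_0(U)$ with $U$ relatively compact open and invoking \autoref{prp:gr_approx}, together with the fact that $\locdim(X \times X)$ is the supremum of $\dim(K \times K)$ over compact $K$.

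For the upper bound in the separable case, the core input is a general-position theorem: if $Y$ is a compact metric space with $\dim(Y \times Y) \leq m-1$, then embeddings are dense in $C(Y,\mathbb{R}^m)$. The mechanism is that injectivity of a perturbation $F$ is equivalent to the difference map $(x,y) \mapsto F(x) - F(y)$ avoiding $0$ on the deleted product $(Y \times Y) \setminus \Delta$; since this domain has dimension at most $\dim(Y \times Y) \leq m-1 < m$, a Menger--N\"obeling-type argument lets one perturb $F$ coherently so that the difference map misses $0$ off the diagonal. Applying this with $Y = X^+$, $m = n+1$, and $n = \dim(X \times X)$ (using $\dim(X^+ \times X^+) = \dim(X \times X)$, as the extra pieces of $X^+ \times X^+$ have dimension at most $\dim(X) \leq \dim(X \times X)$), I would incorporate the constraint $\infty \mapsto 0$ by a preliminary generic perturbation ensuring $0 \notin F(X)$ (possible since $\dim X < m$) and then keeping $\infty$ fixed; the remaining work is general position on $X \times X$. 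This yields $\grPre(C_0(X)) \leq \locdim(X \times X)$.

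The main obstacle is the matching lower bound $\grPre(C_0(X)) \geq \locdim(X \times X)$, i.e.\ showing that once $\dim(X \times X) \geq m$ the generating $m$-tuples fail to be dense: some $\vect{f}$ admits no nearby embedding. Here I would use a van Kampen-type equivariant obstruction. An embedding $\widetilde{F}$ produces, via the normalized difference map, a $\mathbb{Z}/2$-equivariant map from the deleted product (with the coordinate-swap involution) to $S^{m-1}$ (with the antipodal action); the obstruction to the existence of such equivariant maps lives in an equivariant cohomology group of the deleted product. The difficulty, which is the technical heart of the theorem, is to convert the covering-dimension hypothesis $\dim(K \times K) \geq m$ for a suitable compact $K$ into a genuinely \emph{nonvanishing} and approximation-stable obstruction class, so that not merely one map but a whole $C^0$-neighborhood of a suitable $\vect{f}$ contains no embedding. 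Having exhibited such an obstruction on $K$, I would transport it to $C_0(X)$ through the quotient $C_0(X) \twoheadrightarrow C(K)$ and \autoref{prp:gr_idealQuotExt}, and take the supremum over $K$ to reach $\locdim(X \times X)$.

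Finally I would identify $\gr$ with $\grPre$. By \autoref{prp:grPre_gr} we have $\gr(C_0(X)) = \max\{\rr(C_0(X)), \grPre(C_0(X))\}$. Since $\rr(C_0(X)) = \locdim(X)$ and a slice $X \times \{x_0\}$ embeds $X$ as a closed subspace of $X \times X$, giving $\locdim(X) \leq \locdim(X \times X)$, the real rank is dominated by the value $\grPre(C_0(X)) = \locdim(X \times X)$ computed above. Hence $\gr(C_0(X)) = \grPre(C_0(X)) = \locdim(X \times X)$, as claimed.
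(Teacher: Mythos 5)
This paper does not actually prove \autoref{prp:gr_abln}: the statement is imported verbatim from \cite[Theorem~5.5]{Thi12arX:GenRnk}, so there is no in-paper argument to compare against. Your outline does follow the strategy of the cited proof --- the Stone-Weierstra{\ss} reduction identifying $\Gen_{n+1}(C_0(X))_\sa$ with the injective maps $X^+\to\mathbb{R}^{n+1}$ sending $\infty\mapsto 0$, then a dimension-theoretic criterion for density of embeddings, then permanence properties --- and your first and last paragraphs (the generation criterion, and the identification of $\gr$ with $\grPre$ via \autoref{prp:grPre_gr} and $\rr(C_0(X))=\locdim(X)\leq\locdim(X\times X)$) are essentially correct.

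The genuine gap is that both directions of the key topological input --- that for a compact metric space $Y$ the embeddings are dense in $C(Y,\mathbb{R}^m)$ exactly when $\dim(Y\times Y)<m$ --- are asserted but not proved, and neither can be dispatched by the arguments you sketch. For the upper bound, the ``Menger--N\"obeling-type'' perturbation does not go through as described: one must perturb $F$ itself, not the difference map $(x,y)\mapsto F(x)-F(y)$ on the non-compact deleted product, and a small perturbation of the difference map off $0$ need not again have the form $F'(x)-F'(y)$. In the range $m<2\dim(Y)+1$ this is precisely the hard ``map separation problem,'' settled by Spie\.z--Toru\'nczyk and Dranishnikov--Repov\v{s}--Shchepin; it is a substantial theorem, not a general-position exercise. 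For the lower bound you explicitly concede that the heart of the matter --- converting $\dim(K\times K)\geq m$ into a nonvanishing, perturbation-stable van Kampen-type obstruction --- is left open, so as written that direction is not established at all. A smaller but real issue: your reduction of the nonseparable case to the separable one via relatively compact open subsets $U$ does not work, since $C_0(U)$ need not be separable when $X$ is not second countable; one needs the L\"owenheim--Skolem-type machinery of \autoref{pgr:LS} (or a direct argument for arbitrary compact Hausdorff spaces), as this paper does elsewhere for the homogeneous case.
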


\section{Reduction to the separable case}
\label{sec:reduction}

Let us recall a few concepts from model theory that allow us to reduce some proofs in the following sections to the case of separable \ca{s}.
We refer to \cite{FarKat10NonsepUHF1} and \cite{FarHarLupRobTikVigWin21ModelThy} for details.

\begin{pgr}
\label{pgr:LS}
Let $A$ be a \ca{}.
We use $\SubSep(A)$ to denote the set of separable sub-\ca{s} of $A$.
A collection $\mathcal{S}\subseteq\SubSep(A)$ is said to be \emph{$\sigma$-complete} if we have $\overline{\bigcup\{B:B\in\mathcal{T}\}}\in\mathcal{S}$ for every countable, directed subcollection $\mathcal{T}\subseteq\mathcal{S}$.
Further, $\mathcal{S}$ is said to be \emph{cofinal} if for every $B_0\in\SubSep(A)$ there is $B\in\mathcal{S}$ such that $B_0\subseteq B$.
It is well known that the intersection of countably many $\sigma$-complete, cofinal collections is again $\sigma$-complete and cofinal.

In \cite[Definition~1]{Thi13TopDimTypeI}, I formalized the notion of a \emph{noncommutative dimension theory} as an assignment that to each \ca{} $A$ associates a number $d(A)\in\{0,1,2,\ldots,\infty\}$ such that six axioms are satisfied.
Axioms~(D1)-(D4) describe compatibility with passing to ideals, quotients, directs sums, and unitizations.
The other axioms are:
\begin{enumerate}
\item[(D5)]
If $n\in\NN$ and if $A$ is a \ca{} that is approximated by sub-\ca{s} $A_\lambda\subseteq A$ (as in \autoref{prp:gr_approx}) such that $d(A_\lambda)\leq n$ for each $\lambda$, then $d(A)\leq n$.
\item[(D6)]
If $A$ is a \ca{} and $B_0\subseteq A$ is a separable sub-\ca{}, then there is a separable sub-\ca{} $B\subseteq A$ such that $B_0\subseteq B$ and $d(B)\leq d(A)$.
\end{enumerate}

It was noted in \cite[4.1]{Thi21GenRnk} that if $d$ is an assignment from \ca{s} to $\{0,1,\ldots,\infty\}$ that satisfies~(D5) and~(D6), then for each $n\in\NN$ and each \ca{}~$A$ satisfying $d(A)\leq n$, the collection
\[
\big\{ B \in \SubSep(A) : d(B)\leq n \big\}
\]
is $\sigma$-complete and cofinal.
It was shown in \cite{Thi21GenRnk} that $\grPre$ and $\gr$ satisfy~(D5) and~(D6).
\end{pgr}

\begin{lma}
\label{prp:subcollections}
Let $A$ be a \ca{} and let $I\subseteq A$ be an ideal.
We have:
\begin{enumerate}
\item
Let $\mathcal{S}\subseteq\SubSep(I)$ be a $\sigma$-complete and cofinal subcollection.
Then the family $\{B\in\SubSep(A):B\cap I\in\mathcal{S}\}$ is $\sigma$-complete and cofinal.
\item
Let $\mathcal{S}\subseteq\SubSep(A/I)$ be a $\sigma$-complete and cofinal subcollection.
Then the family $\{B\in\SubSep(A):B/(B\cap I)\in\mathcal{S}\}$ is $\sigma$-complete and cofinal.
\end{enumerate}
\end{lma}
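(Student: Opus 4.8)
The plan is to treat both parts uniformly. Writing $\pi\colon A\to A/I$ for the quotient map, in each part the family under consideration is cut out by a condition on $B\in\SubSep(A)$ that is obtained from $B$ by a single continuous, inclusion-preserving operation: intersecting with $I$ in part~(1), and applying $\pi$ (equivalently, passing to $B/(B\cap I)\cong\pi(B)$) in part~(2). So everything reduces to two facts describing how these operations interact with closures of directed unions. Concretely, for a countable directed family $\{C\}$ of separable sub-\ca{s} with $B:=\overline{\bigcup_C C}$ (again separable), I will establish
\[
B\cap I=\overline{\bigcup\nolimits_C (C\cap I)},
\qquad
\pi(B)=\overline{\bigcup\nolimits_C \pi(C)} .
\]
The second identity is immediate from continuity of $\pi$ together with the fact that $\pi(B)$ is a sub-\ca{} of $A/I$, hence closed. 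The first is the only nontrivial point: the inclusion $\supseteq$ is clear, and for $\subseteq$ I take $x\in B\cap I$ and $y\in C$ with $y=_\varepsilon x$; since $\pi(x)=0$ and $\mathrm{dist}(y,C\cap I)=\|\pi(y)\|\leq\|y-x\|<\varepsilon$ (the distance to the ideal $C\cap I$ of $C$ equals the norm in $C/(C\cap I)\cong\pi(C)$), I can correct $y$ by an element of $C\cap I$ to land within $2\varepsilon$ of $x$.

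Granting these two identities, $\sigma$-completeness in both parts is routine: if $\mathcal{T}$ is a countable directed subfamily of the family under consideration and $B:=\overline{\bigcup_{C\in\mathcal{T}}C}$, then $\{C\cap I:C\in\mathcal{T}\}$ (resp.\ $\{\pi(C):C\in\mathcal{T}\}$) is a countable directed subfamily of $\mathcal{S}$, so the closure of its union lies in $\mathcal{S}$ by $\sigma$-completeness of $\mathcal{S}$; by the identities above this closure equals $B\cap I$ (resp.\ $\pi(B)$), which therefore lies in $\mathcal{S}$, as required.

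For cofinality I will use the standard intertwining construction. In part~(1), starting from $B_0\in\SubSep(A)$ I build increasing sequences $B_0\subseteq B_1\subseteq\cdots$ in $\SubSep(A)$ and $S_0\subseteq S_1\subseteq\cdots$ in $\mathcal{S}$ with $B_n\cap I\subseteq S_n\subseteq B_{n+1}\cap I$: given $B_n$, use cofinality of $\mathcal{S}$ to find $S_n\in\mathcal{S}$ with $B_n\cap I\subseteq S_n$, and put $B_{n+1}:=C^*(B_n\cup S_n)$, which is separable and satisfies $S_n\subseteq B_{n+1}\cap I$. Setting $B:=\overline{\bigcup_n B_n}\supseteq B_0$, the interleaving gives $B\cap I=\overline{\bigcup_n(B_n\cap I)}=\overline{\bigcup_n S_n}$, which lies in $\mathcal{S}$ by $\sigma$-completeness, so $B$ is in the family. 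Part~(2) is the same, except that to feed an element of $\mathcal{S}\subseteq\SubSep(A/I)$ back into $A$ I use surjectivity of $\pi$: given $S_n\in\mathcal{S}$ with $\pi(B_n)\subseteq S_n$, lift a countable dense subset of $S_n$ to $A$ and let $D_n$ be the separable sub-\ca{} it generates, so that $\pi(D_n)\supseteq S_n$; then $B_{n+1}:=C^*(B_n\cup D_n)$ yields $\pi(B_n)\subseteq S_n\subseteq\pi(B_{n+1})$, and $B:=\overline{\bigcup_n B_n}$ satisfies $\pi(B)=\overline{\bigcup_n S_n}\in\mathcal{S}$.

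The main obstacle is the identity $B\cap I=\overline{\bigcup_C(C\cap I)}$: intersecting with a closed subspace does not commute with closure in general, and the argument genuinely uses the \ca{} structure through the equality of the distance to an ideal with the quotient norm. Once this is in hand, the remaining steps are the standard packaging of $\sigma$-completeness and the intertwining argument for cofinality, the only part-specific wrinkle being the use of surjectivity of $\pi$ to lift separable subalgebras in part~(2).
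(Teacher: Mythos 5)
Your proof is correct and follows essentially the same route as the paper: $\sigma$-completeness is routine, and cofinality is obtained by the standard intertwining construction interleaving $B_n\cap I$ (resp.\ $\pi(B_n)$) with elements of $\mathcal{S}$. The only difference is that you explicitly justify the identity $B\cap I=\overline{\bigcup_n(B_n\cap I)}$ via the isometry $C/(C\cap I)\cong\pi(C)$, a step the paper's proof asserts without comment.
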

\begin{proof}
(1):
Set $\mathcal{T} := \{ B \in \SubSep(A) : B\cap I \in\mathcal{S} \}$.
It is easy to see that $\mathcal{T}$ is $\sigma$-complete.
To show that it is cofinal, let $B_0\in\SubSep(A)$.
We will inductively find increasing sequences $(I_k)_k$ in $\mathcal{S}$ and $(B_k)_k$ in $\SubSep(A)$ such that
\[
B_0\cap I \subseteq I_0 \subseteq B_1\cap I \subseteq I_1 \subseteq \ldots
\]
Assume that we have obtained $B_k$ for some $k\in\NN$.
Then $B_k\cap I\in\SubSep(I)$, and since $\mathcal{S}$ is cofinal in $\SubSep(I)$, we obtain $I_k\in\mathcal{S}$ such that $B_k\cap I\subseteq I_k$.
Then let $B_{k+1}$ be the sub-\ca{} of $A$ generated by $B_k$ and $I_k$.

Set $B:=\overline{\bigcup_k B_k}$, which belongs to $\SubSep(A)$ and contains $B_0$.
We have $B\cap I = \overline{\bigcup_k I_k}$.
Since $\mathcal{S}$ is $\sigma$-complete, $B\cap I$ belongs to $\mathcal{S}$.
Thus, $B$ belongs to $\mathcal{T}$, as desired.

Statement~(2) is shown similarly.
\end{proof}

\begin{pgr}
\label{pgr:hom}
Recall that a \ca{} is called \emph{$d$-homogeneous} (for some $d\geq 1$) if all its irreducible representations are $d$-dimensional;
and it is called \emph{homogeneous} if it is $d$-homogeneous for some $d$;
see \cite[Definition~IV.1.4.1, p.330]{Bla06OpAlgs}.

Let $A$ be a $d$-homogeneous \ca{}, and set $X:=\Prim(A)$, the primitive ideal space of $A$.
Then $X$ is a locally compact, Hausdorff space and there exists a locally trivial bundle over $X$ with fiber $M_d$ such that $A$ is canonically isomorphic to the algebra of continuous cross-sections vanishing at infinity, with pointwise operations;
see \cite[Theorem~3.2]{Fel61StructureOpFields}.

It follows that the center of $A$ is canonically isomorphic to $C_0(X)$, and this gives $A$ the structure of a continuous $C_0(X)$-algebra, with each fiber isomorphic to $M_d$.
For the definition and results of $C_{0}(X)$-algebras, we refer the reader to \S{2} of \cite{Dad09CtsFieldsOverFD}.
Given a $C_0(X)$-algebra $A$ and a closed subset $Y\subseteq X$, we let $A(Y)$ denote the quotient of $A$ corresponding to~$Y$.
The fiber of $A$ at $x\in X$ is $A(x):=A(\{x\})$.
Given $a\in A$ and $x\in X$, we write $a(x)$ for the image of $a$ in the quotient $A(x)$.
Given $\vect{a}=(a_0,\ldots,a_n)\in A^{n+1}$, we set $\vect{a}(x):=(a_0(x),\ldots,a_n(x))\in A(x)^{n+1}$.

Given a locally compact, Hausdorff space $X$, the \emph{local dimension} of $X$ is
\[
\locdim(X) := \sup \big\{ \dim(K) : K\subseteq X \text{ compact} \big\},
\]
with the convention that $\locdim(\emptyset)=-1$.
As noted in \cite[Paragraph~5.5]{Thi21GenRnk}, if~$X$ is nonempty, then $\locdim(X)$ agrees with the dimension of the one-point compactification of $X$.
If $X$ is $\sigma$-compact, then $\dim(X)=\locdim(X)$.
\end{pgr}

\begin{lma}
\label{prp:LShomUntwisted}
Let $d\geq 1$, $l\in\NN$, and let $X$ be a compact, Hausdorff space satisfying $\dim(X)\leq l$.
Set $A:=C(X,M_d)$.
Then
\[
\mathcal{S} := \big\{ B\in\SubSep(A) : B \text{ $d$-homogeneous}, \locdim(\Prim(B))\leq l \big\}
\]
is $\sigma$-complete and cofinal.
\end{lma}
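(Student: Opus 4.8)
The plan is to realize $\mathcal{S}$ as the intersection of two collections, each of which is $\sigma$-complete and cofinal, and then to invoke the principle recorded in \autoref{pgr:LS} that the intersection of countably many $\sigma$-complete, cofinal collections is again $\sigma$-complete and cofinal. The first collection will encode the dimension bound, the second will encode $d$-homogeneity, and the two conditions will combine correctly because for a $d$-homogeneous \ca{} $B$ one has $\topdim(B)=\locdim(\Prim(B))$. Thus a member of the intersection is exactly a separable, $d$-homogeneous sub-\ca{} with $\locdim(\Prim(B))\leq l$, which is the defining condition of $\mathcal{S}$.

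For the dimension bound I would invoke the topological dimension $\topdim$ studied in \cite{Thi13TopDimTypeI}, which is a noncommutative dimension theory and which on a homogeneous \ca{} computes the local covering dimension of the spectrum. Since $\Prim(A)=X$ is compact with $\dim(X)\leq l$, and $\locdim$ agrees with $\dim$ on compact spaces (see \autoref{pgr:hom}), we get $\topdim(A)=\locdim(X)=\dim(X)\leq l$. As $\topdim$ satisfies axioms~(D5) and~(D6), the criterion quoted in \autoref{pgr:LS} shows that $\big\{ B\in\SubSep(A) : \topdim(B)\leq l \big\}$ is $\sigma$-complete and cofinal.

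For homogeneity I would show that $\mathcal{H}:=\big\{ B\in\SubSep(A) : B \text{ is }d\text{-homogeneous} \big\}$ is $\sigma$-complete and cofinal. Cofinality uses the matrix-entries construction: fixing matrix units in $M_d$ and writing elements of $A=C(X,M_d)$ as $\sum_{ij} b_{ij}\otimes e_{ij}$ with $b_{ij}\in C(X)$, I let $C_0\subseteq C(X)$ be the separable unital sub-\ca{} generated by $1$ together with all entries $b_{ij}$ of a countable dense subset of a given $B_0\in\SubSep(A)$; by continuity every element of $B_0$ then has entries in $C_0$, so $B_0\subseteq M_d(C_0)=C(\widehat{C_0},M_d)$, and $M_d(C_0)$ is $d$-homogeneous with $\Prim(M_d(C_0))=\widehat{C_0}$. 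For $\sigma$-completeness, let $\mathcal{T}\subseteq\mathcal{H}$ be countable and directed and put $B:=\overline{\bigcup\mathcal{T}}$; as a sub-\ca{} of $C(X,M_d)$ it is $d$-subhomogeneous, so it suffices to exclude irreducible representations of dimension $k<d$. Given such a $\rho$ on a $k$-dimensional space, for each $B_i\in\mathcal{T}$ the nondegenerate part of $\rho|_{B_i}$ is a finite-dimensional representation of the $d$-homogeneous algebra $B_i$, hence a direct sum of $d$-dimensional irreducibles, so its dimension is a multiple of $d$; since it is at most $k<d$ it vanishes, forcing $\rho|_{B_i}=0$ for every $i$. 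As $\bigcup\mathcal{T}$ is a dense $*$-subalgebra of $B$ this gives $\rho=0$, a contradiction. Hence $k=d$ and $B$ is $d$-homogeneous.

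Combining the two collections yields $\mathcal{S}$, since on the intersection $\topdim(B)=\locdim(\Prim(B))\leq l$ together with $d$-homogeneity is precisely the defining condition. The step I expect to be the main obstacle is the $\sigma$-completeness of $\mathcal{H}$, i.e.\ that a directed limit of $d$-homogeneous subalgebras of $C(X,M_d)$ remains $d$-homogeneous; the representation-dimension argument above is the crux of that point, whereas the dimension bound is handed to us by the dimension-theory machinery and the identity $\topdim=\locdim\circ\Prim$ for homogeneous algebras, and the verification that $M_d(C_0)$ is homogeneous with the expected spectrum is routine.
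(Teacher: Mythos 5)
Your treatment of the homogeneity condition is correct and coincides with the paper's argument: cofinality of $\mathcal{H}$ via the matrix-entry construction $B_0\subseteq C_0\otimes M_d$, and $\sigma$-completeness via the observation that an irreducible representation of the closed union of dimension $k<d$ must restrict to zero on each $d$-homogeneous member of the directed family. The gap is in the other half --- precisely the half you dismiss as handed to us by the dimension-theory machinery. The blanket claim that $\topdim$ satisfies (D5) and (D6) is not available: $\topdim$ is only defined for type~I algebras, and the only approximation result in play, \cite[Lemma~3]{Thi13TopDimTypeI}, applies to \emph{continuous trace} algebras. A directed union of arbitrary separable subalgebras of $C(X,M_d)$ with $\topdim\leq l$ is merely subhomogeneous, not continuous trace, so the $\sigma$-completeness of $\{B\in\SubSep(A):\topdim(B)\leq l\}$ is not covered by that lemma. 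The paper sidesteps this by proving $\sigma$-completeness of $\mathcal{S}$ directly: it first shows the union is $d$-homogeneous (hence continuous trace) and only then invokes the lemma. Without $\sigma$-completeness of \emph{both} factors, the interleaving argument behind ``the intersection of $\sigma$-complete cofinal collections is cofinal'' breaks down, so your decomposition into two collections does not deliver the conclusion as stated.

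The same issue bites cofinality in a concrete way: your $C_0\otimes M_d$ lies in $\mathcal{H}$ but need not lie in $\mathcal{S}$, because $\widehat{C_0}$ is only a continuous image of the compact space $X$, and such images can have strictly larger covering dimension. One must enlarge $C_0$ further while controlling dimension, and the paper does this using (D6) for the \emph{real rank} applied to the commutative algebra $C(X)$: it enlarges the entry algebra to a separable $C(Z)$ with $\rr(C(Z))\leq\rr(C(X))$, hence $\dim(Z)\leq\dim(X)\leq l$, so that $C(Z)\otimes M_d$ is a member of $\mathcal{S}$ containing $B_0$ in a single step, with no interleaving needed. If you can supply a proof or reference that $\topdim$ satisfies (D6) on the relevant class of algebras, your cofinality argument closes up; but the $\sigma$-completeness of the $\topdim$ factor would still need to be replaced by the paper's order of argument (homogeneity of the union first, then the continuous-trace approximation lemma).
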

\begin{proof}
\emph{$\sigma$-completeness:}
Let $\mathcal{T}\subseteq\mathcal{S}$ be a countable, directed family, and set $C:=\overline{\bigcup\{B:B\in\mathcal{T}\}}$.
To show that $C$ is $d$-homogeneous, let $\varrho$ be an irreducible representation of~$C$.
Since $C$ is $d$-subhomogeneous (as a subalgebra of $A$), the dimension of $\varrho$ is at most~$d$.
If $\dim(\varrho)<d$, then the restriction of $\varrho$ to each $B\in\mathcal{T}$ is zero, whence $\varrho=0$, a contradiction.

In \cite[Section~2.2]{BroPed09Limits}, Brown and Pedersen introduce the \emph{topological dimension} of type~$\mathrm{I}$ \ca{s}.
Given a homogeneous \ca{} $D$, the topological dimension $\topdim(D)$ is equal to $\locdim(\Prim(D))$. 
Hence, each $B\in\mathcal{T}$ satisfies $\topdim(B)=\locdim(\Prim(B))\leq l$.
By \cite[Lemma~3]{Thi13TopDimTypeI}, a continuous trace \ca{} (in particular a homogeneous \ca{}) has topological dimension at most $l$ whenever it is approximated by sub-\ca{s} with topological dimension at most $l$.
Hence,
\[
\locdim(\Prim(C))
= \topdim(C) \leq l,
\]
which verifies that $C$ belongs to $\mathcal{S}$, as desired.

\emph{Cofinality:}
Let $B_0\subseteq A$ be a separable sub-\ca.
We identify $A$ with $C(X)\otimes M_d$.
Let $e_{jk}\in M_d$, $j,k=1,\ldots,d$ be matrix units.
Let $C(Y)\subseteq C(X)$ be a separable, unital sub-\ca{} such that $f\in C(X)$ belongs to $C(Y)$ whenever $f\otimes e_{jk}\in B_0$ for some $j,k$.
Using that the real rank satisfies~(D6), let $C(Z)\subseteq C(X)$ be a separable sub-\ca{} containing $C(Y)$ and such that $\rr(C(Z))\leq\rr(C(X))$.
Then
\[
\dim(Z)=\rr(C(Z))\leq\rr(C(X))=\dim(X)\leq l,
\]
and it follows that $C(Z)\otimes M_d\subseteq C(X)\otimes M_d$ has the desired properties.
\end{proof}

\begin{prp}
\label{prp:LShom}
Let $d\geq 1$, $l\in\NN$, and let $A$ be a $d$-homogeneous \ca{} satisfying $\locdim(\Prim(A))\leq l$.
Then
\[
\mathcal{S} := \big\{ B\in\SubSep(A) : B \text{ $d$-homogeneous}, \locdim(\Prim(B))\leq l \big\}
\]
is $\sigma$-complete and cofinal.
\end{prp}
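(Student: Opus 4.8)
The plan is to handle $\sigma$-completeness and cofinality separately, using the untwisted case \autoref{prp:LShomUntwisted} as a local model and reducing the nontrivial bundle structure to it by local triviality together with the transport principle \autoref{prp:subcollections}. For $\sigma$-completeness I would note that the proof of \autoref{prp:LShomUntwisted} never used that $A$ is an untwisted bundle: given a countable, directed $\mathcal{T}\subseteq\mathcal{S}$ and $C:=\overline{\bigcup\mathcal{T}}$, the algebra $C$ is a sub-\ca{} of the $d$-homogeneous, hence $d$-subhomogeneous, algebra $A$, so each irreducible representation of $C$ has dimension at most $d$; one of dimension strictly less than $d$ would vanish on every $B\in\mathcal{T}$ and hence on $C$, so $C$ is $d$-homogeneous. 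The estimate $\locdim(\Prim(C))=\topdim(C)\le l$ then follows verbatim from the behaviour of the Brown--Pedersen topological dimension under approximation, which does not see the twist. Hence the $\sigma$-completeness half carries over unchanged.

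The real content is cofinality. Here I would invoke that a $d$-homogeneous \ca{} is the section algebra of a locally trivial bundle with fibre $M_d$, so each point of $X:=\Prim(A)$ has a relatively compact open neighbourhood over whose closure $K$ the bundle trivializes, giving $A(K)\cong C(K,M_d)$ with $\dim(K)\le\locdim(X)\le l$. Given a separable $B_0\subseteq A$, the function $x\mapsto\|b(x)\|$ lies in $C_0(X)$ for each $b\in B_0$, so the footprint of $B_0$ is contained in a $\sigma$-compact open set $U_\infty$ with $\locdim(U_\infty)\le l$. Passing to the ideal of $A$ corresponding to $U_\infty$, which is $d$-homogeneous with spectrum $U_\infty$ and contains $B_0$, I may assume $X$ is $\sigma$-compact; since the desired properties of $B$ are intrinsic to $B$, a suitable $B$ produced here lies in $\mathcal{S}$ for the original $A$ as well. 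Being $\sigma$-compact and locally compact, $X$ is Lindel\"of, so I extract a countable cover $\{U_i\}_{i\in\NN}$ by relatively compact trivializing opens with compacts $K_i:=\overline{U_i}$ satisfying $A(K_i)\cong C(K_i,M_d)$ and $\dim(K_i)\le l$.

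On each trivializing quotient $A(K_i)$, \autoref{prp:LShomUntwisted} yields a $\sigma$-complete and cofinal family $\mathcal{S}_i\subseteq\SubSep(A(K_i))$ consisting of the $d$-homogeneous subalgebras $C$ with $\locdim(\Prim(C))\le l$. Pulling $\mathcal{S}_i$ back along the quotient map $A\to A(K_i)$ by \autoref{prp:subcollections}(2) gives a $\sigma$-complete and cofinal family $\mathcal{T}_i\subseteq\SubSep(A)$, and as a countable intersection of $\sigma$-complete, cofinal families is again $\sigma$-complete and cofinal, I can choose $B\in\bigcap_i\mathcal{T}_i$ with $B_0\subseteq B$. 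Then each restriction $B|_{K_i}$ is $d$-homogeneous with $\locdim(\Prim(B|_{K_i}))\le l$. Since the interiors $U_i$ cover $X$, every nonzero fibre of $B$ is a full copy of $M_d$, so $B$ is $d$-homogeneous; and $\Prim(B)$ is covered by the open sets $\Prim(B|_{U_i})$, each of local dimension at most $l$, whence $\locdim(\Prim(B))\le l$ by the countable sum theorem for covering dimension on the second-countable space $\Prim(B)$. Thus $B\in\mathcal{S}$, establishing cofinality.

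I expect the main difficulty to be exactly this reconciliation of local data into a single $B$ that is simultaneously $d$-homogeneous \emph{and} satisfies $\locdim(\Prim(B))\le l$: the untwisted model controls $B$ only over each $K_i$. Local triviality and \autoref{prp:subcollections} make the assembly essentially formal, but two points need care --- the reduction to a $\sigma$-compact spectrum (so that a countable trivializing cover exists) and the appeal to the sum theorem that upgrades the patchwise dimension bounds to a global bound on $\locdim(\Prim(B))$.
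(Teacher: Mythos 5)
Your proposal is correct and matches the paper's argument in all essentials: the $\sigma$-completeness half is carried over verbatim from \autoref{prp:LShomUntwisted}, and cofinality is obtained by passing to the $\sigma$-compact ideal determined by $B_0$, covering its spectrum by countably many compact trivializing sets, pulling back the families from \autoref{prp:LShomUntwisted} via \autoref{prp:subcollections}(2), intersecting, and then verifying homogeneity fiberwise and the dimension bound by a countable sum theorem. The only cosmetic difference is that the paper checks $\locdim(\Prim(B))\leq l$ by covering a compact subset with the \emph{closed} sets corresponding to the quotients $\pi_j(B)$ (so the closed countable sum theorem applies directly), and verifies $d$-homogeneity by extending an irreducible representation of $B$ to the ambient ideal rather than arguing directly with fibers; both points are routine to reconcile with your version.
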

\begin{proof}
As in the proof of \autoref{prp:LShomUntwisted} we obtain that $\mathcal{S}$ is $\sigma$-complete.

\emph{Cofinality:}
Let $B_0\subseteq A$ be a separable sub-\ca.
Let $I\subseteq A$ be the ideal generated by $B_0$.
Then $I$ is $d$-homogeneous and $X:=\Prim(I)$ is $\sigma$-compact.
We view $I$ as a $C_0(X)$-algebra with all fibers isomorphic to $M_d$.
Since the $M_d$-bundle associated to $I$ is locally trivial, and since $X$ is $\sigma$-compact, we can choose a sequence of compact subsets $X_0,X_1,X_2,\ldots\subseteq X$ that cover $X$ and such that $I(X_j)\cong C(X_j)\otimes M_d$ for each $j\in\NN$.

Given $j$, let $\pi_j\colon I\to C(X_j)\otimes M_d$ be the corresponding quotient map, and set
\[
\mathcal{S}_j := 
\big\{
B\in\SubSep(I) : \pi_j(B) \text{ $d$-homogeneous}, \locdim(\Prim(\pi_j(B)))\leq l \big\}
\]
Applying Lemmas~\ref{prp:subcollections}(2) and~\ref{prp:LShomUntwisted}, we obtain that $\mathcal{S}_j$ is $\sigma$-complete and cofinal.
It follows that $\mathcal{S}:=\bigcap_{j=0}^\infty\mathcal{S}_j$ is $\sigma$-complete and cofinal as well.
Choose $B\in\mathcal{S}$ satisfying $B_0\subseteq B$.

To verify that $B$ is $d$-homogeneous, let $\varrho$ be an irreducible representation of $B$.
Since $B$ is $d$-subhomogeneous, we have $\dim(\varrho)\leq d$.
Extend $\varrho$ to an irreducible representation $\varrho'$ of $I$ (a-priori on a possibly larger Hilbert space).
Then there exists $x\in X$ such that $\varrho'$ is isomorphic to the quotient map to the fiber at $x$.
Let $j\in\NN$ such that $x\in X_j$.
Since $B$ belongs to $\mathcal{S}_j$, it exhausts the fiber at $x$, and we deduce that $\dim(\varrho)\geq d$.

To see that $\locdim(\Prim(B))\leq l$, let $K\subseteq\Prim(B)$ be a compact subset.
For each $j$, let $F_j\subseteq\Prim(B)$ be the closed subset corresponding to the quotient $\pi_j(B)$ of $B$.
Since $B$ belongs to $\mathcal{S}_j$, we have $\locdim(F_j)\leq l$.
Hence, $\dim(K\cap F_j)\leq l$.
We have $K=\bigcup_j (K\cap F_j)$, and therefore
\[
\dim(K) = \sup_j \dim(K\cap F_j) \leq l
\]
by the Countable Sum Theorem;
see \cite[Theorem~3.2.5, p.125]{Pea75DimThy}
(see also the introduction to \autoref{sec:subhom}).
\end{proof}

\section{Homogeneous \texorpdfstring{$C^*$-algebras}{C*-algebras}}
\label{sec:hom}

In this section, we compute the generator rank of homogeneous \ca{s};
see \autoref{prp:gr_hom}.
We first consider the unital, separable case (\autoref{prp:gr_hom_sep}), we then generalize to the unital, nonseparable case (\autoref{prp:gr_hom_unital}) and finally to the general case.
Unlike for commutative \ca{s}, the unital, separable case is highly nontrivial and it requires a delicate analysis of the codimension of certain submanifolds of $(M_d)^{n+1}_\sa$ (\autoref{prp:dimEdm}) in connection with a suitable version of the homotopy extension lifting property (\autoref{prp:HELP}).

The next result characterizes generating tuples in separable $C(X)$-algebras with simple fibers, and thus in particular in unital, separable, homogeneous \ca{s}.
Given a map $\varphi\colon D\to E$ between \ca{s} and $\vect{a}=(a_0,\ldots,a_n)\in D^{n+1}$, we set
\[
\varphi(\vect{a}) := (\varphi(a_0),\ldots,\varphi(a_n)) \in E^{n+1}.
\]

\begin{prp}
\label{prp:generators_C(X)-alg}
Let $X$ be a compact, metric space, and let $A$ be a separable $C(X)$-algebra such that all fibers are simple.
Let $n\in\NN$ and $\vect{a}\in A^{n+1}_\sa$.
Then $\vect{a}\in\Gen_{n+1}(A)_\sa$ if and only if the following are satisfied:
\begin{enumerate}
\item[(a)]
$\vect{a}$ generates each fiber, that is, $\vect{a}(x)\in\Gen_{n+1}(A(x))_\sa$ for each $x\in X$;
\item[(b)]
$\vect{a}$ separates the points of $X$ in the sense that for distinct $x,y\in X$, there is no isomorphism $\alpha\colon A(x)\to A(y)$ satisfying $\alpha(\vect{a}(x))=\vect{a}(y)$. 
\end{enumerate}
\end{prp}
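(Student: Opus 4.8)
The plan is to analyze how $B := C^*(\vect{a})$ behaves under the quotient maps $\pi_x\colon A\to A(x)$ and the two-fibre maps $\pi_{x,y}:=(\pi_x,\pi_y)\colon A\to A(x)\oplus A(y)$. Throughout I use that $\vect{a}$ generates $A$ exactly when $B=A$, and that $\pi_x(B)=C^*(\vect{a}(x))$ since $\pi_x$ is a surjective $*$-homomorphism. Note that simplicity of the fibres forces each $A(x)\neq 0$.

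Necessity is the easy direction. If $B=A$, then $C^*(\vect{a}(x))=\pi_x(B)=\pi_x(A)=A(x)$, which is~(a). For~(b), suppose toward a contradiction that distinct $x,y$ admit an isomorphism $\alpha\colon A(x)\to A(y)$ with $\alpha(\vect{a}(x))=\vect{a}(y)$. Every $c\in B$ is a norm-limit of noncommutative polynomials in the (self-adjoint) entries of $\vect{a}$; since $\alpha$ is a homomorphism that sends $a_j(x)$ to $a_j(y)$, applying $\pi_x,\pi_y$ and using continuity of $\alpha$ gives $\alpha(c(x))=c(y)$ for \emph{all} $c\in B$. Now choose $b\in A$ with $b(x)\neq 0$ and $f\in C(X)$ with $f(x)=1$, $f(y)=0$. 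The element $c:=fb\in A$ satisfies $c(x)=b(x)$ and $c(y)=0$, whence $\alpha(c(x))=\alpha(b(x))\neq 0=c(y)$. Thus $c\notin B$, contradicting $B=A$.

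For sufficiency I assume~(a) and~(b) and must show $B=A$. The first step is a Goursat-type dichotomy. Fix distinct $x,y$ and set $B_{x,y}:=\pi_{x,y}(B)\subseteq A(x)\oplus A(y)$, a $C^*$-subalgebra which by~(a) surjects onto each factor. Then $J_1:=\{s\in A(x):(s,0)\in B_{x,y}\}$ and $J_2:=\{t\in A(y):(0,t)\in B_{x,y}\}$ are closed two-sided ideals (using closedness of $B_{x,y}$ and surjectivity onto the factors), so by simplicity each equals $0$ or the whole fibre. If either is the whole fibre one quickly deduces $B_{x,y}=A(x)\oplus A(y)$. If both vanish, both coordinate projections restrict to isomorphisms on $B_{x,y}$, so $B_{x,y}$ is the graph of an isomorphism $\alpha\colon A(x)\to A(y)$; since $(\vect{a}(x),\vect{a}(y))\in B_{x,y}$ this $\alpha$ satisfies $\alpha(\vect{a}(x))=\vect{a}(y)$, which is excluded by~(b). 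Hence $\pi_{x,y}(B)=A(x)\oplus A(y)$ for all distinct $x,y$; that is, $B$ realizes every prescribed pair of fibre values.

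The last step globalizes this two-point interpolation to $B=A$, and this is where the main difficulty lies. Given $a\in A_\sa$ and $\varepsilon>0$, condition~(a) together with the upper semicontinuity of $x\mapsto\|a(x)-b_x(x)\|$ produces, for each $x$, an element $b_x\in B$ with $\|a(y)-b_x(y)\|<\varepsilon$ on a neighbourhood of $x$; finitely many such neighbourhoods cover the compact space $X$. The obstruction is that the classical patching multiplies the $b_x$ by a scalar partition of unity from $C(X)$, which need \emph{not} preserve $B$; indeed $C(X)\cdot B\subseteq B$ is essentially equivalent to the theorem itself. To stay inside $B$ one instead patches using cutoff elements manufactured from $B$: exploiting the pair-surjectivity together with functional calculus, and the fact that $X$ is compact and metrizable (so disjoint closed sets have disjoint open neighbourhoods), one builds positive contractions in $B$ that act as approximate local units on one fibre region while approximately annihilating another, and assembles the $b_x$ into a single $b\in B$ with $\|a-b\|<\varepsilon$. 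This Stone--Weierstra{\ss}-type gluing over the Hausdorff base is the crux of the argument; it yields $a\in\overline{B}=B$, and since $A_\sa$ is total this gives $B=A$, completing the proof.
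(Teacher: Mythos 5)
Your necessity argument and your Goursat-type two-fibre analysis are both correct. In fact, for the dichotomy ``either $\pi_{x,y}(C^*(\vect{a}))=A(x)\oplus A(y)$ or it is the graph of an isomorphism intertwining $\vect{a}(x)$ and $\vect{a}(y)$'' you supply the details that the paper outsources to \cite[Lemma~5.9]{Thi12arX:GenRnk}, and your use of simplicity of the fibres there is exactly right.

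The problem is the final step, which you yourself flag as ``the crux of the argument'' and ``where the main difficulty lies'': passing from pairwise fibre surjectivity of $B:=C^*(\vect{a})$ to $B=A$. You describe a strategy --- manufacture positive contractions in $B$ acting as approximate local units on one region of fibres while approximately annihilating another, then patch the local approximants $b_x$ --- but you do not carry it out, and this is precisely the nontrivial Stone--Weierstra{\ss}-type content of the statement. Turning ``$\pi_{x,y}(B)=A(x)\oplus A(y)$ for all pairs'' into such cutoff elements requires converting a single element $b\in B$ with $b(x)$ full in the simple fibre $A(x)$ and $b(y)=0$ into an approximate unit for $a(x')$ uniformly over a neighbourhood of $x$, running a compactness argument over $y$ for fixed $x$ and then over $x$, and controlling the norms in the assembly; none of this is present. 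The paper resolves exactly this point by reducing to the two-point condition ``for distinct $x,y$ there is $b\in B$ with $b(x)$ full in $A(x)$ and $b(y)=0$'' and invoking \cite[Lemma~3.2]{ThiWin14GenZStableCa}, which is a previously established gluing lemma for $C(X)$-algebras. To close your proof you must either cite such a result or actually construct the approximate local units and the patching; as written, the sufficiency direction asserts its hardest step rather than proving it.
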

\begin{proof}
Let us first assume that $\vect{a}\in\Gen_{n+1}(A)_\sa$.
For $x\in X$, let $\pi_x\colon A\to A(x)$ be the quotient map onto the fiber at $x$.
Since $\pi_x$ is a surjective ${}^*$-homomorphism, it maps $\Gen_{n+1}(A)_\sa$ to $\Gen_{n+1}(A(x))_\sa$, which verifies~(a).
Similarly, for distinct points $x,y\in X$, the map $\pi_x\oplus\pi_y\colon A\to A(x)\oplus A(y)$ is a surjective ${}^*$-homomorphism. 
It follows that $(\vect{a}(x),\vect{a}(y))=(\pi_x\oplus\pi_y)(\vect{a})\in\Gen_{n+1}(A(x)\oplus A(y))_\sa$.
To verify~(b), assume that $\alpha\colon A(x)\to A(y)$ is an isomorphism satisfying $\alpha(\vect{a}(x))=\vect{a}(y)$.
Then
\[
C^*((\vect{a}(x),\vect{a}(y))) = \big\{ (d,\alpha(d))\in A(x)\oplus A(y) : d\in A(x) \big\} \neq A(x)\oplus A(y),
\]
which contradicts that $(\vect{a}(x),\vect{a}(y))$ generates $A(x)\oplus A(y)$.
Thus, no such $\alpha$ exists.

Conversely, let us assume that~(a) and~(b) are satisfied.
Set $B:=C^*(\vect{a})$.
We need to prove $B=A$.
This follows from \cite[Lemma~3.2]{ThiWin14GenZStableCa} once we show that $B$ exhausts the fiber $A(x)$ for each $x\in X$, and that for distinct $x,y\in X$ there exists $b\in B$ such that $b(x)$ is full in $A(x)$ and $b(y)=0$.
The exhaustion of fibers follows directly from~(a).

Let $x,y\in X$ be distinct, and set $C:=(\pi_x\oplus\pi_y)(B)\subseteq A(x)\oplus A(y)$.
Note that $C$ is the sub-\ca{} of $A(x)\oplus A(y)$ generated by $(\vect{a}(x),\vect{a}(y))$.
If $C\neq A(x)\oplus A(y)$, using that $A(x)$ and $A(y)$ are simple, it follows from \cite[Lemma~5.10]{Thi21GenRnk} that there exists an isomorphism $\alpha\colon A(x)\to A(y)$ such that
\[
C = \big\{ (d,\alpha(d))\in A(x)\oplus A(y) : d\in A(x) \big\},
\]
which implies that $\alpha(\vect{a}(x))=\vect{a}(y)$.
Since this contradicts~(b), we deduce that $C=A(x)\oplus A(y)$.
Hence, there exists $b\in B$ such that $b(x)$ is full in $A(x)$ and $b(y)=0$.
\end{proof}

\begin{ntn}
\label{ntn:E}
For $d\geq 2$ and $n\in\NN$, we set:
\begin{align*}
E_d^{n+1}
:=(M_d)^{n+1}_\sa, \quad\text{ and }\quad
G_d^{n+1}
:=\Gen_{n+1}(M_d)_\sa \subseteq E_d^{n+1}.
\end{align*}

Note that $E_d^{n+1}$ is isomorphic to $\mathbb{R}^{(n+1)d^2}$ as topological vector spaces.
In particular, $E_d^{n+1}$ is a (real) manifold with $\dim(E_d^{n+1})=(n+1)d^2$.

We let $\calU_d$ denote the unitary group of $M_d$.
It is a compact Lie group of dimension~$d^2$.
Every automorphism of $M_d$ is inner, and the kernel of $\calU_d\to\Aut(M_d)$ is the group of central unitaries $\mathbb{T} 1\subseteq \calU_d$.
Hence, $\Aut(M_d)$ is naturally isomorphic to $\PU_d:=\calU_d/(\mathbb{T} 1)$, the projective unitary group, which is a compact Lie group of dimension $d^2-1$.
Given $u\in\calU_d$, we use $[u]$ to denote its class in $\PU_d$.

The action $\PU_d\curvearrowright M_d$ induces an action $\PU_d\curvearrowright E_d^{n+1}$ by setting
\[
[u].\vect{a} := (ua_0u^*,\ldots,ua_nu^*)
\]
for $u\in\calU_d$ and $\vect{a}=(a_0,\ldots,a_n)\in E_d^{n+1}$.
\end{ntn}

\begin{pgr}
\label{pgr:homApproach}
Let $A$ be a unital, separable $d$-homogeneous \ca{}, and let $n\in\NN$.
Set $X:=\Prim(A)$.
We consider $A$ with its canonical $C(X)$-algebra structure, with each fiber isomorphic to $M_d$;
see \autoref{pgr:hom}.
Set
\[
\Gen^{\mathrm{fiber}}_{n+1}(A)_\sa
:= \big\{ \vect{a}\in A^{n+1}_\sa : \vect{a}(x)\in\Gen_{n+1}(A(x))_\sa \text{ for each } x\in X \big\}.
\]

Given $x\in X$, let $\pi_x\colon A\to A(x)$ denote the map to the fiber at $x$.
This induces a map $A^{n+1}_\sa\to (A(x))^{n+1}_\sa$, which we also denote by $\pi_x$.
Choose an isomorphism $A(x)\cong M_d$, which induces an isomorphism $(A(x))^{n+1}_\sa \cong E^{n+1}_d = (M_d)^{n+1}_\sa$.
Since the isomorphism $A(x)\cong M_d$ is unique up to an automorphism of $M_d$, we obtain a canonical homeomorphism $(A(x))^{n+1}_\sa/\Aut(A(x)) \cong E^{n+1}_d  / \PU_d$.
We let $\psi_x\colon A^{n+1}_\sa\to E^{n+1}_d/\PU_d$ be the resulting natural map.

Given $\vect{a}\in A^{n+1}_\sa$, one checks that $\psi_x(\vect{a})$ depends continuously on $x$.
This allows us to define $\Psi\colon A^{n+1}_\sa\to C(X,E^{n+1}_d/\PU_d)$ by 
\[
\Psi(\vect{a})(x) := \psi_x(\vect{a}),
\]
for $\vect{a}\in A^{n+1}_\sa$ and $x\in X$.
Restricting $\Psi$ to $\Gen^{\mathrm{fiber}}_{n+1}(A)_\sa$ gives a continuous map
\[
\Psi\colon \Gen^{\mathrm{fiber}}_{n+1}(A)_\sa \to C(X,G^{n+1}_d/\PU_d).
\]

We let $E(X,G^{n+1}_d/\PU_d)$ denote the set of continuous maps $X\to G^{n+1}_d/\PU_d$ that are injective.
By \autoref{prp:generators_C(X)-alg}, a tuple $\vect{a}\in A^{n+1}_\sa$ belongs to $\Gen_{n+1}(A)_\sa$ if and only if~(a): $\vect{a}\in\Gen^{\mathrm{fiber}}_{n+1}(A)_\sa$, and~(b): $\Psi(\vect{a})\in E(X,G^{n+1}_d/\PU_d)$.
Thus, to determine the generator rank of $A$, we need to answer the following questions:
\begin{enumerate}
\item[(a)]
When is $\Gen^{\mathrm{fiber}}_{n+1}(A)_\sa$ dense in $A^{n+1}_\sa$?
\item[(b)]
When is $E(X,G_d^{n+1}/\PU_d)$ dense in $C(X,G_d^{n+1}/\PU_d)$?
\end{enumerate}

Analogous as for the computation of the generator rank for unital, separable, commutative \ca{s} in \cite[Section~5]{Thi21GenRnk}, the answer to question~(a) is determined by $\dim(X)$, and the answer to~(b) is determined by $\dim(X\times X)$.
However, while in the commutative case the dominating condition was~(b) involving $\dim(X\times X)$, we will see that for $d$-homogeneous \ca{s} with $d\geq 2$ the dominating condition is~(a) involving $\dim(X)$.

To study~(a), we will determine the dimension of $E^{n+1}_d\setminus G^{n+1}_d$.
For this, we study the action $\PU_d\curvearrowright E_d^{n+1}$.
We will show that $G^{n+1}_d$ consists precisely of the tuples in $E_d^{n+1}$ with trivial stabilizer subgroup;
see \autoref{prp:G_trivial_stabilizer}.
This allows us to describe $E^{n+1}_d\setminus G^{n+1}_d$ as the union of the submanifolds corresponding to nontrivial stabilizer subgroups.
We then estimate the dimension of these submanifolds;
see \autoref{prp:dimEdm}.

To study~(b), we show that $G_d^{n+1}$ is an open subset of $E_d^{n+1}$;
see \autoref{prp:unitalGen}.
Hence, $G_d^{n+1}$ is a manifold with $\dim(G_d^{n+1})=\dim(E_d^{n+1})=(n+1)d^2$.
We let $G_d^{n+1}/\PU_d$ denote the quotient space.
Since $\PU_d$ is a compact Lie group of dimension $d^2-1$, it follows that $G_d^{n+1}/\PU_d$ is a manifold of dimension $(n+1)d^2-(d^2-1)=nd^2+1$.
We then use a result of \cite{Luu81ApproxByEmb} which characterizes when a continuous map to a manifold can be approximated by injective maps.

Finally, we use a version of the homotopy extension lifting property for the projection $G_d^{n+1}\to G_d^{n+1}/\PU_d$ (see \autoref{prp:HELP}) to show that a given tuple in $\Gen^{\mathrm{fiber}}_{n+1}(A)_\sa$ can be approximated by tuples that are mapped to $E(X,G_d^{n+1}/\PU_d)$ by $\Psi$.
\end{pgr}

\begin{pgr}
\label{pgr:orbitTypeDec}
Let $G$ be a compact Lie group, acting smoothly on a connected manifold $M$.
We briefly recall the orbit type decomposition.
For details, we refer the reader to \cite{Bre72CpctTransfGps} and \cite{Mei03GpActionMfld}.
We will later apply this for the action $\PU_d\curvearrowright E^{n+1}_d$.

The \emph{stabilizer subgroup} of $m\in M$ is
\[
\stab(m) := \big\{ g\in G : g.m=m \big\}.
\]
Two subgroups $H$ and $H'$ of $G$ are \emph{conjugate}, denoted $H\sim H'$, if there exists $g\in G$ such that $H=gH'g^{-1}$.
We let 
\[
T := \big\{ \{H': H'\sim \stab(m)\} : m\in M \big\}
\]
denote the collection of all conjugation classes of stabilizer subgroups.
Set
\[
M_t := \big\{ m\in M : \stab(m)\in t \big\}
\]
for $t\in T$.
We have $\stab(g.m)=g\stab(m)g^{-1}$ for all $g\in G$ and $m\in M$, which implies that each $M_t$ is $G$-invariant. 

Let us additionally assume that each $M_t$ is connected.
Then, by \cite[Theorem~1.30]{Mei03GpActionMfld}, each $M_t$ is a smooth embedded submanifold of $M$, and $M$ decomposes as a disjoint union $M=\bigcup_{t\in T} M_t$.
(See also \cite[Theorem~IV.3.3, p.182]{Bre72CpctTransfGps}.)
Further, this decomposition satisfies the \emph{frontier condition}: 
For all $t',t\in T$, if $M_{t'}\cap\overline{M_t}\neq\emptyset$ then $M_{t'}\subseteq \overline{M_t}$.
This defines a partial order on $T$ by setting $t'\leq t$ if $M_{t'}\subseteq\overline{M_t}$.
The \emph{depth} of $t\in T$ is defined as $\depth(t)=0$ if $t$ is maximal, and otherwise
\[
\depth(t) := \sup\big\{ k\geq 1 : t<t_1<t_2<\ldots<t_k \text{ for some } t_1,\ldots,t_k\in T \big\}.
\]

In many cases, one knows that $T$ is finite and contains a largest element;
see Sections~IV.3 and~IV.10 in \cite{Bre72CpctTransfGps}.

Set $M_{\mathrm{free}}:=\{m\in M : \stab(m)=\{1\}\}$.
If $M_{\mathrm{free}}\neq\emptyset$, then the conjugacy class of the trivial subgroup is the largest element in $T$, and $M_{\mathrm{free}}$ is an open submanifold of $M$.
The restriction of the action to $M_{\mathrm{free}}$ is free.
\end{pgr}

\begin{prp}
\label{prp:avoidSubmanifolds}
Retain the situation from \autoref{pgr:orbitTypeDec}.
Assume that $M$ is metrizable with metric $d_M$, $T$ is finite and $M\neq M_{\mathrm{free}}\neq\emptyset$.
Let $X$ be a compact, Hausdorff space.
Then the following are equivalent:
\begin{enumerate}
\item
$C(X,M_{\mathrm{free}})\subseteq C(X,M)$ is dense with respect to the metric $d(f,g):=\sup_{x\in X}d_M(f(x),g(x))$, for $f,g\in C(X,M)$;
\item
$\dim(X)<\dim(M)-\dim(M\setminus M_{\mathrm{free}})$.
\end{enumerate}
\end{prp}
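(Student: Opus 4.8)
The plan is to analyze the \emph{singular set} $S := M \setminus M_{\mathrm{free}}$, which by \autoref{pgr:orbitTypeDec} is a finite union $S = \bigcup M_t$ (over the non-free types) of embedded submanifolds, each of dimension strictly less than $\dim M =: m$. Writing $s := \dim S = \max_t \dim M_t$ (a finite union, so this is correct by the Countable Sum Theorem), the frontier condition shows that each set $Z_p := \bigcup_{\dim M_t \le p} M_t$ is closed: indeed $\overline{M_t} \subseteq \bigcup_{t' \le t} M_{t'}$, and $t' < t$ forces $\dim M_{t'} < \dim M_t$. Thus $S$ carries a filtration by closed sets $\emptyset = Z_{-1} \subseteq Z_0 \subseteq \cdots \subseteq Z_s = S$ in which $N_p := Z_p \setminus Z_{p-1}$ is, by the same dimension argument, an embedded $p$-dimensional submanifold that is closed in the open manifold $M \setminus Z_{p-1}$. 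The single external input I will use is Hemmingsen's characterization of covering dimension: a compact Hausdorff space $X$ satisfies $\dim X \ge q$ if and only if some map $X \to [0,1]^q$ has a \emph{stable value}, i.e.\ a value lying in the image of every sufficiently close map.

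For the implication (2)$\Rightarrow$(1) I would first isolate the local statement: if $N \subseteq M'$ is a closed embedded submanifold of codimension $q > \dim X$ in a manifold $M'$, then $\{g : g(X) \cap N = \emptyset\}$ is dense in $C(X,M')$. To prove it, fix a tubular neighborhood of $N$ and a finite cover of the compact set $g(X)$ by charts $\phi \colon U \to \mathbb{R}^p \times \mathbb{R}^q$ over which the normal bundle trivializes, so that $N \cap U$ becomes $\mathbb{R}^p \times \{0\}$. Over one chart, pushing the $\mathbb{R}^q$-component of $g$ off the origin by an arbitrarily small amount is possible precisely because $\dim X < q$ means, by Hemmingsen, that $0$ is \emph{not} a stable value of any map into $\mathbb{R}^q$. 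Granting this local statement, the positive direction follows by induction on $p$: assuming maps avoiding $Z_{p-1}$ are dense, approximate $f$ first by a map $g$ into the open manifold $M \setminus Z_{p-1}$, then apply the local statement to $N_p$, whose codimension $m - p \ge m - s > \dim X$ by hypothesis~(2), to remove the intersection with $N_p$ by a perturbation small enough to remain off $Z_{p-1}$; the result avoids $Z_p = Z_{p-1} \cup N_p$. At $p = s$ this gives density of $C(X, M \setminus S) = C(X, M_{\mathrm{free}})$.

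For (1)$\Rightarrow$(2) I would argue by contraposition: assuming $\dim X \ge m - s$, I construct $f \in C(X,M)$ that is not approximable by maps into $M_{\mathrm{free}}$. Fix a stratum $M_{t_1}$ of maximal dimension $s$, a point $p_0 \in M_{t_1}$, and a chart $\phi \colon U \to \mathbb{R}^q \times \mathbb{R}^s$ (with $q = m - s$) identifying $M_{t_1} \cap U$ with $\{0\} \times \mathbb{R}^s$. Since $\dim X \ge q$, Hemmingsen supplies a map $h \colon X \to \mathbb{R}^q$, with image in a small ball, having $0$ as a stable value; set $f := \phi^{-1}(h(\freeVar), 0)$. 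If $g$ is close enough to $f$, then $g(X) \subseteq U$ and the $\mathbb{R}^q$-component of $\phi \circ g$ is close to $h$, so stability forces $0$ into its image; the corresponding point of $g(X)$ then lies in $\{0\} \times \mathbb{R}^s = M_{t_1} \cap U \subseteq S$. Hence every $g$ near $f$ meets $S$, so none lies in $C(X, M_{\mathrm{free}})$, contradicting density.

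The main obstacle is the globalization inside the local general-position statement. The normal bundle of a stratum need not be trivial, so there is no single $\mathbb{R}^q$-valued normal coordinate, and one must patch the small chartwise push-offs over a trivializing cover while (i) keeping the total perturbation below the prescribed $\varepsilon$ and (ii) not re-introducing intersections over charts already cleared. Organizing this as a finite induction over the cover, confining each correction to its chart via an Urysohn cut-off and using that ``missing $N$'' is an open condition to preserve earlier clearance, is the technical heart; everything else reduces to the single dimension-theoretic input together with the closed filtration furnished by the frontier condition.
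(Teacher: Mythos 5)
Your overall strategy matches the paper's: stratify the singular set $S=M\setminus M_{\mathrm{free}}$, clear the singular strata one at a time by a codimension/general-position argument for (2)$\Rightarrow$(1), and for (1)$\Rightarrow$(2) exhibit a map concentrated near a stratum of maximal dimension that cannot be pushed off $S$. The paper outsources both local ingredients to Beggs--Evans \cite{BegEva91RRMatrixValued} (Lemma~1.4 for the perturbation step, Proposition~1.6 for the non-density criterion), whereas you reprove them from Hemmingsen's stable-value characterization of covering dimension; that is a legitimate and more self-contained route, and the chart-patching difficulty you honestly flag is precisely what their Lemma~1.4 packages up, so nothing is lost there.

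The one genuine gap is the assertion that ``$t'<t$ forces $\dim M_{t'}<\dim M_t$,'' which you present as a consequence of the frontier condition. It is not: the frontier condition only says that $\overline{M_t}\setminus M_t$ is a union of strata $M_{t'}$ with $t'<t$, and in general the frontier of a locally closed embedded $p$-submanifold can again have dimension $p$ (the graph of $\sin(1/x)$ over $(0,1)$ has the segment $\{0\}\times[-1,1]$ in its frontier, and this configuration is compatible with the frontier condition). Your entire filtration $Z_p=\bigcup_{\dim M_t\le p}M_t$ --- its closedness, and the claim that $N_p$ is a closed submanifold of $M\setminus Z_{p-1}$ --- rests on this monotonicity, so it must be proved. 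The statement is in fact true for orbit-type stratifications, but the proof goes through the slice theorem: near a point of $M_{t'}$ with stabilizer $H$, any stratum $M_t$ with $t>t'$ accumulating there has stabilizer type a proper subgroup $K$ of $H$, and comparing $\dim(G/K)+\dim V^{K}$ with $\dim(G/H)+\dim V^{H}$ forces a strict increase. You should either supply this argument or sidestep the issue entirely as the paper does: order the singular strata by decreasing \emph{depth} rather than increasing dimension, after which the frontier condition alone guarantees that $\overline{M_{t_j}}\setminus M_{t_j}$ lies in the union of the strata already cleared, which is all your induction actually needs. With that repair (and the patching details in your local lemma carried out), the proof is correct.
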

\begin{proof}
Note that $T$ contains exactly one element of depth zero, namely the conjugacy class of $\{1\}$.
Therefore
\[
M\setminus M_{\mathrm{free}} = \bigcup_{t\in T, \depth(t)\geq 1} M_t.
\]
and it follows that
\[
\dim(M\setminus M_{\mathrm{free}}) 
= \max\big\{ \dim(M_t) : \depth(t)\geq 1 \big\}.
\]

To show that~(1) implies~(2), assume that $\dim(X)\geq\dim(M)-\dim(M\setminus M_{\mathrm{free}})$.
Choose $t\in T$ of depth $\geq 1$ such that $\dim(X)\geq\dim(M)-\dim(M_t)$.
As noted in \cite[Proposition~1.6]{BegEva91RRMatrixValued}, it follows that $C(X,M\setminus M_t)\subseteq C(X,M)$ is not dense, which implies that~(1) fails.

Assuming~(2), let us prove~(1).
Let $f\in C(X,M)$ and $\varepsilon>0$.
The proof is similar to that of Theorem~1.3 in \cite{BegEva91RRMatrixValued}.
We inductively change $f$ to avoid each~$M_t$, but instead of proceeding by the (co)dimension of the submanifolds, we use their depths.

It follows from the frontier condition that for each $t\in T$, the set $\overline{M_t}\setminus M_t$ is contained in the union of submanifolds $M_s$ with $s\in T$ and $\depth(s)>\depth(t)$.
Let $t_1,\ldots,t_K$ be an enumeration of the elements in $T$ with depth $\geq 1$, such that $\depth(t_1)\geq\depth(t_2)\geq\ldots\geq\depth(t_K)$.
Note that $M_{t_1}$ is a closed submanifold (since $t_1$ has maximal depth and thus $\overline{M_{t_1}}\setminus M_{t_1}=\emptyset$), and for each $j\geq 2$ the set $\overline{M_{t_j}}\setminus M_{t_j}$ is contained in $M_{t_1}\cup\ldots M_{t_{j-1}}$.
Further, every $M_{t_j}$ is a submanifold of codimension $\geq\dim(X)+1$.

By \cite[Lemma~1.4]{BegEva91RRMatrixValued}, if $Y\subseteq M$ is submanifold of codimension $\geq\dim(X)+1$, if $\delta>0$, and if $g\in C(X, M)$ satisfies $g(X)\cap (\overline{Y}\setminus Y)=\emptyset$, then there exists $g'\in C(X,M)$ such that $d(g,g')\leq\delta$ and $g'(X)\cap\overline{Y}=\emptyset$.
Set $f_0:=f$.
We will inductively find $f_k\in C(X,M)$ such that for each $k=1,\ldots,K$ we have
\[
d(f_{k-1},f_k) < \frac{\varepsilon}{2^k}, \quad\text{ and }\quad f_k(X)\cap \overline{M_{t_j}}=\emptyset \text{ for } j=1,\ldots,k.
\]

First, using that the boundary of $M_{t_1}$ is empty, we can apply \cite[Lemma~1.4]{BegEva91RRMatrixValued} to obtain $f_1\in C(X,M)$ such that
\[
d(f_0,f_1) < \frac{\varepsilon}{2}, \quad\text{ and }\quad f_1(X)\cap \overline{M_{t_1}}=\emptyset.
\]

For $k\geq 2$, assuming that we have chosen $f_{k-1}$, let $\delta_k$ denote the (positive) distance between the compact set $f_{k-1}(X)$ and $\overline{M_{t_1}}\cup\ldots\cup\overline{M_{t_{k-1}}}$.
Applying \cite[Lemma~1.4]{BegEva91RRMatrixValued}, we obtain $f_k\in C(X,M)$ such that
\[
d(f_{k-1},f_k) < \min\left\{ \frac{\varepsilon}{2^k},\delta_k \right\}, \quad\text{ and }\quad f_k(X)\cap \overline{M_{t_k}}=\emptyset.
\]
By choice of $\delta_k$, it follows that $f_k(X)$ is disjoint from $\overline{M_{t_1}}\cup\ldots\cup\overline{M_{t_k}}$.

Finally, the element $f_K$ belongs to $C(X,M_{\mathrm{free}})$ and satisfies $d(f,f_K)<\varepsilon$.
\end{proof}

\begin{pgr}
\label{pgr:Sub1}
We let $\Sub_1(M_d)$ denote the collection of sub-\ca{s} of $M_d$ that contain the unit of $M_d$.
Given $\vect{a}\in E^{n+1}_d:=(M_d)^{n+1}_\sa$, we set $C^*_1(\vect{a}):=C^*(\vect{a},1)\in\Sub_1(M_d)$. 
We let $\PU_d$ act on $\Sub_1(M_d)$ by $[u].B:=uBu^*$ for $u\in\calU_d$ and $B\in\Sub_1(M_d)$.
Given $B_1,B_2\in\Sub_1(M_d)$, we write $B_1\sim B_2$ if $B_1$ and $B_2$ lie in the same orbit of this action, that is, if $B_1=uB_2u^*$ for some $u\in\calU_d$.

Given $\vect{a}\in E^{n+1}_d$, we have $C^*_1(\vect{a})=M_d$ if and only if $C^*(\vect{a})$, and thus
\begin{align*}
G^{n+1}_d 
&:= \Gen_{n+1}(M_d)_\sa
= \big\{ \vect{a}\in (M_d)^{n+1}_\sa : C^*(\vect{a})=M_d \big\} \\
&= \big\{ \vect{a}\in (M_d)^{n+1}_\sa : C^*_1(\vect{a})=M_d \big\}.
\end{align*}

Given a sub-\ca{} $B\subseteq M_d$, we let $B' := \{ c \in M_d : bc=cb \text{ for all } b\in B \}$ denote its commutant.
We always have $B'\in\Sub_1(M_d)$, and by the bicommutant theorem we have $B''=B$ for all $B\in\Sub_1(M_d)$.
\end{pgr}

\begin{lma}
\label{prp:G_trivial_stabilizer}
Let $\vect{a}\in E^{n+1}_d$.
Then
\[
\stab(\vect{a}) = \big\{ [u] : u\in\calU(C^*(\vect{a})') \big\}.
\]
Further, we have $\vect{a}\in G^{n+1}_d$ if and only if $\stab(\vect{a})=\{[1]\}$.
\end{lma}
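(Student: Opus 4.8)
The plan is to translate both assertions into statements about the commutant $C^*(\vect{a})'$. For the first identity, note that $[u]\in\stab(\vect{a})$ means $ua_ju^*=a_j$ for $j=0,\ldots,n$, equivalently that $u$ commutes with each $a_j$. A unitary commutes with all the $a_j$ if and only if it commutes with the $*$-algebra they generate and, by continuity, with its closure $C^*(\vect{a})$; that is, if and only if $u\in C^*(\vect{a})'$. Since $C^*(\vect{a})'$ is a unital sub-\ca{} of $M_d$ sharing the unit $1$, the unitaries of $M_d$ contained in it are exactly the elements of $\calU(C^*(\vect{a})')$. This yields $\stab(\vect{a})=\{[u]:u\in\calU(C^*(\vect{a})')\}$.

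For the equivalence, one direction is immediate: if $\vect{a}\in G_d^{n+1}$ then $C^*(\vect{a})=M_d$, so $C^*(\vect{a})'=\mathbb{C}1$ and hence $\stab(\vect{a})=\{[1]\}$ by the first part. For the converse, assume $\stab(\vect{a})=\{[1]\}$. Then every unitary in $C^*(\vect{a})'$ is a scalar, i.e.\ $\calU(C^*(\vect{a})')\subseteq\mathbb{T}1$; since a unital \ca{} is the linear span of its unitaries, this forces $C^*(\vect{a})'=\mathbb{C}1$.

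The main obstacle is then to pass from $C^*(\vect{a})'=\mathbb{C}1$ to the actual generation statement $C^*(\vect{a})=M_d$. The difficulty is that $C^*(\vect{a})$ need not contain the unit of $M_d$, so a direct application of the bicommutant theorem only computes the unital algebra $C^*_1(\vect{a})=C^*(\vect{a})''=(\mathbb{C}1)'=M_d$. To close this gap, I would let $p$ denote the unit of $C^*(\vect{a})$; then $1-p$ is a projection commuting with $C^*(\vect{a})$, so $1-p\in C^*(\vect{a})'=\mathbb{C}1$ and therefore $1-p\in\{0,1\}$. The possibility $1-p=1$ would give $C^*(\vect{a})=0$ and hence $C^*(\vect{a})'=M_d\neq\mathbb{C}1$ (here $d\geq 2$ is used), a contradiction. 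Thus $p=1$, so $C^*(\vect{a})$ is unital and $C^*(\vect{a})=C^*_1(\vect{a})=M_d$, giving $\vect{a}\in G_d^{n+1}$.
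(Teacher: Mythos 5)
Your proof is correct and follows essentially the same route as the paper: identify $\stab(\vect{a})$ with the unitaries of the commutant, then use that a unital $C^*$-algebra is spanned by its unitaries together with the bicommutant theorem to relate triviality of the stabilizer to $C^*_1(\vect{a})=M_d$. The only cosmetic difference is that you argue the converse directly (and spell out the passage from $C^*_1(\vect{a})=M_d$ to $C^*(\vect{a})=M_d$ via the unit of $C^*(\vect{a})$), whereas the paper argues the contrapositive and leaves the corresponding step implicit.
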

\begin{proof}
Given $u\in\calU_d$, we have $[u].\vect{a}=\vect{a}$ if and only if $uxu^*=x$ for every $x\in C^*(\vect{a})$.
This implies the formula for $\stab(\vect{a})$.

If $\vect{a}\in G^{n+1}_d$, then $C^*(\vect{a})'=\mathbb{C} 1$, which implies that $\stab(\vect{a})$ is trivial.
Conversely, assuming that $\vect{a}\in E_d^{n+1}\setminus G_d^{n+1}$, let us verify that $\vect{a}$ has nontrivial stabilizer subgroup.
Since~$C^*(\vect{a})\neq M_d$, we also have $C^*_1(\vect{a})\neq M_d$.
Using the bicommutant theorem, we deduce that $C^*_1(\vect{a})'$ is strictly larger than the center of $M_d$.
Using that $C^*(\vect{a})'=C^*_1(\vect{a})'$, we obtain a noncentral unitary in $C^*(\vect{a})'$.
\end{proof}

\begin{lma}
\label{prp:stabSim}
Let $\vect{a},\vect{b}\in E^{n+1}_d$.
Then we have $\stab(\vect{a}) \sim \stab(\vect{b})$ if and only if $C^*_1(\vect{a}) \sim C^*_1(\vect{b})$.
\end{lma}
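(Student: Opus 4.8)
The plan is to translate the statement into one about the commutant subalgebras $C^*_1(\vect{a})'$ and $C^*_1(\vect{b})'$, and then exploit that a unital subalgebra of $M_d$ is completely determined by the image of its unitary group in $\PU_d$. Writing $B:=C^*_1(\vect{a})$ and $D:=C^*_1(\vect{b})$, I would first record that, by \autoref{prp:G_trivial_stabilizer} together with the equality $C^*(\vect{a})'=C^*_1(\vect{a})'$ noted in its proof, each stabilizer is the image of a unitary group: one has $\stab(\vect{a})=q(\calU(B'))$ and $\stab(\vect{b})=q(\calU(D'))$, where $q\colon\calU_d\to\PU_d$, $u\mapsto[u]$, is the quotient map.

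The two auxiliary facts I would isolate are the following. First, conjugation by a unitary $u\in\calU_d$ is an automorphism of $M_d$ and therefore commutes with taking commutants, so that $(uCu^*)'=uC'u^*$ for every $C\in\Sub_1(M_d)$. Second, a unital subalgebra $C\subseteq M_d$ can be recovered from the projective image of its unitary group, in the sense that $q^{-1}(q(\calU(C)))=\calU(C)$ and $C=\operatorname{span}\calU(C)$. The first equality here holds because $\mathbb{T}1\subseteq C$ forces $\calU(C)$ to be invariant under multiplication by $\mathbb{T}1$, so that $q^{-1}(q(\calU(C)))=\mathbb{T}\,\calU(C)=\calU(C)$ and no information is lost in passing to $\PU_d$; the second is the standard fact that every element of a unital $C^*$-algebra is a linear combination of unitaries.

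Granting these, the backward direction is immediate: if $D=uBu^*$, then conjugation-commutant compatibility gives $D'=uB'u^*$, hence $\calU(D')=u\,\calU(B')\,u^*$ and $\stab(\vect{b})=[u]\stab(\vect{a})[u]^{-1}$. For the forward direction, I would start from $\stab(\vect{b})=[u]\stab(\vect{a})[u]^{-1}$ and compute $[u]\,q(\calU(B'))\,[u]^{-1}=q(u\,\calU(B')\,u^*)=q(\calU(uB'u^*))$, so that $q(\calU(D'))=q(\calU(uB'u^*))$. Applying the recovery fact twice, first by taking $q$-preimages and then by taking linear spans, yields $D'=uB'u^*$; a final application of conjugation-commutant compatibility together with the bicommutant theorem gives $D=D''=(uB'u^*)'=uB''u^*=uBu^*$, that is, $C^*_1(\vect{a})\sim C^*_1(\vect{b})$.

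I expect the only genuine subtlety to lie in the recovery fact used in the forward direction: one must verify that the assignment $C\mapsto q(\calU(C))$ loses no information, which is precisely the $\mathbb{T}1$-saturation of $\calU(C)$, and that the subalgebra is reconstructible as the span of its unitaries. Everything else is formal bookkeeping with commutants and unitary conjugation, so the argument should be short once these two facts are in place.
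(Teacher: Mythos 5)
Your proposal is correct and follows essentially the same route as the paper: both arguments identify the stabilizers with the $\PU_d$-images of the unitary groups of the commutants via \autoref{prp:G_trivial_stabilizer}, use the $\mathbb{T}1$-saturation of $\calU(C)$ for unital $C$ to lift conjugacy in $\PU_d$ back to $\calU_d$, recover the commutant as the span of its unitaries, and finish with the bicommutant theorem. The only cosmetic difference is that the paper first isolates the equivalence $B_1\sim B_2\Leftrightarrow B_1'\sim B_2'$ and argues the forward direction by two inclusions, whereas you obtain the equality $\calU(D')=\calU(uB'u^*)$ in one step from the saturation fact.
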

\begin{proof}
Let $B_1,B_2\in\Sub_1(M_d)$.
If $u\in\calU_d$ satisfies $uB_1u^*=B_2$, then one checks $uB_1'u^*=B_2'$.
Using also that $B_1$ and $B_2$ agree with their bicommutants, we obtain:
\[
B_1\sim B_2 \quad\Leftrightarrow\quad B_1'\sim B_2'.
\]
Using that $C^*_1(\vect{a}) = C^*(\vect{a})''$ and $C^*_1(\vect{a})' = C^*(\vect{a})'$, and similarly $C^*_1(\vect{b}) = C^*(\vect{b})''$ and $C^*_1(\vect{b})' = C^*(\vect{b})'$, we need to show:
\begin{align*}
\stab(\vect{a}) \sim \stab(\vect{b})
\quad\Leftrightarrow\quad
C^*(\vect{a})' \sim C^*(\vect{b})'.
\end{align*}

To prove the forward implication, we assume that $\stab(\vect{a}) \sim \stab(\vect{b})$.
Let $v\in\calU_d$ such that $[v]\stab(\vect{a})[v]^{-1}= \stab(\vect{b})$.
Given $u\in\calU(C^*(\vect{a})')$, it follows from \autoref{prp:G_trivial_stabilizer} that
\[
[vuv^*] \in \stab(\vect{b})
= \big\{ [w] : w\in\calU(C^*(\vect{b})') \big\}.
\]
Using that $\mathbb{T} 1\subseteq\calU(C^*(\vect{b})')$, we obtain $vuv^*\in \calU(C^*(\vect{b})')$.
Since $C^*(\vect{a})'$ is spanned by its unitary elements, we get $vC^*(\vect{a})'v^*\subseteq C^*(\vect{b})'$.
The reverse inclusion is shown analogously, whence  $vC^*(\vect{a})'v^*= C^*(\vect{b})'$, that is, $C^*(\vect{a})'\sim C^*(\vect{b})'$.

Conversely, if $C^*(\vect{a})' \sim C^*(\vect{b})'$, let $v\in\calU_d$ such that $uC^*(\vect{a})'v^*=C^*(\vect{b})'$.
Using \autoref{prp:G_trivial_stabilizer}, we get $[v]\stab(\vect{a})[v]^{-1}= \stab(\vect{b})$, that is, $\stab(\vect{a}) \sim \stab(\vect{b})$.
\end{proof}

\begin{lma}
\label{prp:unitalGen}
Let $B$ be a finite-dimensional \ca{} and $n\geq 1$.
Then the set $\{\vect{a}\in B^{n+1}_\sa : C^*_1(\vect{a})=B\}$ is a path-connected, dense, open subset of $B^{n+1}_\sa$.
\end{lma}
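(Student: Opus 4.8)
The plan is to reduce to a single matrix block and then run a dimension count. Write $B\cong\bigoplus_{i=1}^k M_{d_i}$ and decompose a tuple as $\vect{a}=(\vect{a}^{(1)},\ldots,\vect{a}^{(k)})$ with $\vect{a}^{(i)}\in(M_{d_i})^{n+1}_\sa$; note that the image of $C^*_1(\vect{a})$ under the $i$-th coordinate projection is $C^*_1(\vect{a}^{(i)})$. The first step is a Goursat-type characterization: since each $M_{d_i}$ is simple, a unital sub-\ca{} $C\subseteq B$ all of whose block projections are surjective equals $B$ if and only if no two blocks are identified. Concretely, $C^*_1(\vect{a})=B$ if and only if (i) $C^*_1(\vect{a}^{(i)})=M_{d_i}$ for each $i$, and (ii) for all $i\neq j$ there is no isomorphism $\alpha\colon M_{d_i}\to M_{d_j}$ with $\alpha(\vect{a}^{(i)})=\vect{a}^{(j)}$. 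I would prove this by restricting $C$ to each pair of blocks, using that the image in $M_{d_i}\oplus M_{d_j}$ is either everything or the graph of an isomorphism, and that an irreducible summand of $C$ can be identified with at most one block of $B$.

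Openness is then immediate from the lower semicontinuity of $\vect{a}\mapsto\dim_{\mathbb{C}}C^*_1(\vect{a})$: if finitely many words in $a_0,\ldots,a_n,1$ are linearly independent at $\vect{a}_0$ they remain so nearby, and $C^*_1(\vect{a})=B$ is exactly the condition that this dimension attains its maximal value $\dim_{\mathbb{C}}B$. For \emph{both} density and path-connectedness I would prove the single statement that the complement $Z:=B^{n+1}_\sa\setminus\{\vect{a}:C^*_1(\vect{a})=B\}$ is a closed, semialgebraic set of codimension at least $2$. Granting this, $Z$ has empty interior, so the set of generators is dense; and since a closed semialgebraic set of codimension $\geq 2$ does not locally separate the connected manifold $B^{n+1}_\sa$, any two generators may be joined by a path (take the straight segment and perturb it off $Z$, using that a generic $1$-parameter family misses a set of codimension $\geq 2$), which gives path-connectedness.

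By the characterization, $Z$ is the union of the sets $Z_i:=\{\vect{a}:C^*_1(\vect{a}^{(i)})\neq M_{d_i}\}$ and, whenever $d_i=d_j$, the ``merging'' sets $Z_{ij}:=\{\vect{a}:\vect{a}^{(j)}\in\PU_{d_i}\cdot\vect{a}^{(i)}\}$. The set $Z_{ij}$ is closed, since the orbit of the compact group $\PU_{d_i}$ is closed, and fibering over $\vect{a}^{(i)}$ its $\vect{a}^{(j)}$-fibre lies in an orbit of dimension at most $\dim\PU_{d_i}=d_i^2-1$; hence its codimension is at least $(n+1)d_i^2-(d_i^2-1)=nd_i^2+1\geq 2$ for $n\geq 1$. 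The main work lies in the single-block sets $Z_i$, where I would invoke the orbit type decomposition of \autoref{pgr:orbitTypeDec} for $\PU_d\curvearrowright E^{n+1}_d$ (with $d=d_i$). By \autoref{prp:G_trivial_stabilizer} and \autoref{prp:stabSim} the orbit types correspond to the finitely many conjugacy classes of unital sub-\ca{s} $C\cong\bigoplus_\ell M_{e_\ell}$ of $M_d$ (with multiplicities $m_\ell$, so $\sum_\ell m_\ell e_\ell=d$ and commutant $C'\cong\bigoplus_\ell M_{m_\ell}$), so $Z_i$ is a finite union of the corresponding strata.

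The core computation bounds the dimension of such a stratum $M_t$ for a proper $C$. It is covered by the map $\PU_d\times\{\vect{a}\in C^{n+1}_\sa:C^*_1(\vect{a})=C\}\to M_t$, whose fibres are homeomorphic to the normalizer $\mathcal{N}(C)$ of $C$ in $\PU_d$; using $\dim\{\vect{a}\in C^{n+1}_\sa:C^*_1(\vect{a})=C\}\leq(n+1)\sum_\ell e_\ell^2$ together with $\mathcal{N}(C)\supseteq\calU(C)\cdot\calU(C')$, which yields $\dim\mathcal{N}(C)\geq\sum_\ell e_\ell^2+\sum_\ell m_\ell^2-L-1$ (here $L$ is the number of blocks of $C$), one obtains
\[
\mathrm{codim}\,M_t \;\geq\; n\Big(d^2-\sum_\ell e_\ell^2\Big)+\Big(\sum_\ell m_\ell^2-L\Big).
\]
The hard part is then to check that the right-hand side is at least $2$ for every proper $C$ once $n\geq 1$: if $C$ has a single block then properness forces $m_1\geq 2$, so $\sum_\ell m_\ell^2-L=m_1^2-1\geq 3$; and if $C$ has at least two blocks then $d^2-\sum_\ell e_\ell^2\geq 2\sum_{\ell<\ell'}e_\ell e_{\ell'}\geq 2$, so the first term already suffices. (The minimum $2n(d-1)$ is attained at the block type $M_{d-1}\oplus M_1$, consistent with the codimension of the degenerate locus for $d=2$.) This codimension estimate is the technical heart of the argument; with it in hand, $Z_i$ and hence $Z$ is a finite union of submanifolds of codimension $\geq 2$, which completes the proof.
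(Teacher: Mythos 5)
Your proof is essentially correct, but it takes a genuinely different route from the paper's. The paper gets \emph{denseness} for free from the known fact that finite-dimensional \ca{s} have generator rank at most one (\cite[Lemma~7.2]{Thi12arX:GenRnk}) together with \autoref{prp:grSep}; it gets \emph{openness} by writing the complement as a finite union of $\calU(B)$-orbits of proper unital sub-\ca{s}, each closed by compactness of $\calU(B)$; and it proves \emph{path-connectedness} by exhibiting explicit paths (and in fact only sketches the case $B=M_d$, even though the general finite-dimensional case is what is used later in \autoref{prp:dimEdm}). You instead derive all three properties from a single codimension-$\geq 2$ estimate for the complement, plus lower semicontinuity of $\vect{a}\mapsto\dim_{\mathbb{C}}C^*_1(\vect{a})$ for openness -- the latter is arguably cleaner than the paper's argument. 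Two cautions. First, your stratum-dimension bound is essentially the content of \autoref{prp:dimEdm}, which in the paper is proved \emph{after} and \emph{using} this lemma (openness and connectedness of $F$); you must therefore not cite it, and indeed you don't -- your version only needs the upper bound $\dim F_C\leq\dim C^{n+1}_\sa$ coming from the inclusion $F_C\subseteq C^{n+1}_\sa$, together with the fact that smooth (locally Lipschitz) images do not raise dimension, so there is no circularity; but you are then doing the technical work of \autoref{prp:dimEdm} here rather than where the paper places it, and with a sharper normalizer bound ($\calU(C)\cdot\calU(C')$ rather than just $\calU(C)$) that the paper does not need. Second, the path-connectedness step needs a version of the perturbation argument that fixes endpoints (or: perturb the segment, then reconnect the perturbed endpoints to the originals inside small balls contained in the open set of generators), and it needs the complement to be a finite union of locally closed submanifolds (or a semialgebraic set) so that a generic path misses codimension $\geq 2$ -- both are standard but should be said. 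The net trade-off: the paper's proof is shorter and more modular because it leans on prior results about $\gr$ of finite-dimensional algebras; yours is self-contained and treats path-connectedness of the general direct-sum case uniformly, where the paper only sketches one block.
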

\begin{proof}
Set $G:=\{\vect{a}\in B^{n+1}_\sa : C^*_1(\vect{a})=B\}$.

\emph{Denseness:}
By \cite[Lemma~7.2]{Thi21GenRnk}, we have $\gr(B)\leq 1\leq n$.
Since $B$ is unital and separable, it follows from \autoref{prp:grSep} that $\Gen_{n+1}(B)_\sa\subseteq B^{n+1}_\sa$ is dense.
Using that $\Gen_{n+1}(B)_\sa\subseteq G$, we get that $G$ is also dense in $B^{n+1}_\sa$.

\emph{Openness:}
Let $\mathcal{D}$ denote the family of sub-\ca{s} $D\subseteq B$ such that $D+\mathbb{C} 1_B$ is a proper sub-\ca{} of $B$ (that is, $C^*_1(D)\neq B$).
Then
\[
G = B^{n+1}_\sa \setminus \bigcup_{D\in\mathcal{D}} D^{n+1}_\sa.
\]
Thus, we need to show that $\bigcup_{D\in\mathcal{D}} D^{n+1}_\sa$ is a closed subset of $B^{n+1}_\sa$.

We let $\calU(B)$ denote the unitary group of $B$.
It naturally acts on $\mathcal{D}$ by setting $u.D:=uDu^*$ for $u\in\calU(B)$ and $D\in\mathcal{D}$.
Since $B$ is finite-dimensional, two sub-\ca{s} $D_1,D_2\subseteq B$ are unitarily equivalent if and only if $D_1\cong D_2$ and the inclusions induce the same maps in ordered $K_0$-theory.
It follows that the action $\calU(B)\curvearrowright\mathcal{D}$ has only finitely many orbits, and we choose representatives $D_1,\ldots,D_m\in\mathcal{D}$.
Then $\mathcal{D}=\bigcup_{j=1}^m \bigcup_{u\in\calU{B}} uD_ju^*$.

For each $j$, since $D_j$ is a closed subset of $B$, it follows that $(D_j)^{n+1}_\sa$ is a closed subset of $B^{n+1}_\sa$.
Since $B$ is finite-dimensional, $\calU(B)$ is compact, and it follows that
\[
\bigcup_{D\in\mathcal{D}} D^{n+1}_\sa
= \bigcup_{j=1}^m \bigcup_{u\in\calU(B)} u(D_j)^{n+1}_\sa u^*
\]
is closed, as desired.

\emph{Path-connectedness:}
We only sketch the argument for the case $B=M_d$ for some $d\geq 2$.
Let $\vect{a}\in\Gen_{n+1}(M_d)_\sa$.
Using that the unitary group of $M_d$ is path-connected, and that $a_0$ is unitarily equivalent to a diagonal matrix, we find a path in $\Gen_{n+1}(M_d)_\sa$ from $\vect{a}$ to some $\vect{b}$ such that $b_0$ is diagonal.
By splitting multiple eigenvalues of $b_0$ and moving them away from zero, we find a path $(x_t)_{t\in[0,1]}$ inside the self-adjoint, diagonal matrices starting with $x_0=b_0$ and ending with some $x_1$ such that $x_1$ has $k$ distinct, nonzero diagonal entries, and such that $b_0\in C^*(x_t)$ for each $t\in[0,1]$.
Then $t\mapsto (x_t,b_1,\ldots,b_n)$ defines a path inside $\Gen_{n+1}(M_d)_\sa$.

Let $S$ denote the set of self-adjoint matrices in $M_d$ such that every off-diagonal entry is nonzero.
Note that $S$ is path-connected.
Next, we let $(y_t)_{t\in[0,1]}$ be a path inside the self-adjoint matrices starting with $y_0=b_1$, ending with some matrix $y_1$ that has the eigenvalues $1,2,\ldots,d$ and such that $y_t$ belongs to $S$ for every $t\in(0,1]$.
Note that $x_1$ and $y_t$ generated $M_d$ for every $t\in(0,1]$.
It follows that $t\mapsto (x_1,y_t,b_2,\ldots,b_n)$ defines a path inside $\Gen_{n+1}(M_d)_\sa$.

Conjugating by a suitable path of unitaries, we find a path in $\Gen_{n+1}(M_d)_\sa$ from $(x_1,y_1,b_2\ldots,b_n)$ to some $\vect{c}=(c_0,c_1,\ldots,c_n)$ such that $c_1=\diag(1,2,\ldots,d)$.
Arguing as above, we find a path in $\Gen_{n+1}(M_d)_\sa$ that changes $c_0$ to the matrix $\tilde{c_0}$ with all entries $1$.
Then $\tilde{c_0}$ and $c_1$ generate $M_d$.

Then $t\mapsto (\tilde{c_0},c_1,(1-t)c_2,\ldots,(1-t)c_n)$ is a path in $\Gen_{n+1}(M_d)_\sa$ connecting to $(\tilde{c_0},c_1,0\ldots,0)$.
Thus, every $\vect{a}\in\Gen_{n+1}(M_d)_\sa$ is path-connected to the same element.
\end{proof}

\begin{pgr}
\label{pgr:parametrization}
Let $d\geq 2$, and $n\in\NN$.
The compact Lie group $\PU_d$ acts smoothly on the manifold $E^{n+1}_d:=(M_d)^{n+1}_\sa$.
We will describe the corresponding orbit type decomposition of $E^{n+1}_d$.

Given $\vect{a},\vect{b}\in E^{n+1}_d$, by \autoref{prp:stabSim} we have $\stab(\vect{a})\sim\stab(\vect{b})$ if and only if $C^*_1(\vect{a})\sim C^*_1(\vect{b})$.
Moreover, given $B\in\Sub_1(M_d)$, there exists $\vect{a}\in E^{n+1}_d$ with $B=C^*_1(\vect{a})$.
It follows that the orbit types of $\PU_d\curvearrowright E^{n+1}_d$ naturally correspond to the orbit types of the action $\PU_d\curvearrowright\Sub_1(M_d)$.

Given $B_1,B_2\in\Sub_1(M_d)$, it is well-known that $B_1\sim B_2$ if and only if $B_1$ and~$B_2$ are isomorphic, that is, $B_1\cong B_2\cong \oplus_{j=1}^L M_{d_j}$ for some $L,d_1,\ldots,d_L\geq 1$, and if for each $j$ the maps $M_{d_j}\to B_1\to M_d$ and $M_{d_j}\to B_2\to M_d$ have the same mul\-ti\-plic\-i\-ty~$m_j$.
Thus, to parametrize the orbit types of $\PU_d\curvearrowright\Sub_1(M_d)$, we consider
\[
T_0 := \left\{ \big( (d_1,\ldots,d_L),(m_1,\ldots,m_L) \big) : L,d_j,m_j\geq 1,
\sum_{j=1}^L d_jm_j=d \right\}.
\]

Given $(\vect{d},\vect{m})\in T_0$, we let $B(\vect{d},\vect{m})\subseteq M_d$ be the sub-\ca{} of block diagonal matrices, with $m_1$ equal blocks of size $d_1$, followed by $m_2$ equal blocks of size $d_2$, and so on.
We point out that the numbers $d_1,\ldots,d_L$ are not required to be distinct.
For example $B((d),(1))=M_d$, $B((1),(d))=\mathbb{C} 1$, and $B((1,\ldots,1),(1,\ldots,1))$ is the algebra of diagonal matrices.

We define an equivalence relation on $T_0$ by setting $(\vect{d},\vect{m})\sim(\vect{d}',\vect{m}')$ if all tuples $\vect{d},\vect{m},\vect{d}',\vect{m}'$ contain the same number of elements, say $L\geq 1$, and if there is a permutation $\sigma$ of $\{1,\ldots,L\}$ such that
\[
d_j=d_{\sigma(j)}',\quad
m_j=m_{\sigma(j)}' \text{ for } j=1,\ldots,L.
\]
For example, we have $((2,2),(1,2))\sim((2,2),(2,1))$, but $((2,2),(1,2))\nsim((2),(3))$.

We have $(\vect{d},\vect{m})\sim(\vect{d}',\vect{m}')$ if and only if $B(\vect{d},\vect{m})\sim B(\vect{d}',\vect{m}')$.

Set $T:=T_0/_\sim$.
Given $(\vect{d},\vect{m})\in T_0$, we let $[\vect{d},\vect{m}]$ denote its equivalence class in~$T$.
For every $B\in\Sub_1(M_d)$ there exists $(\vect{d},\vect{m})\in T_0$ such that $B\sim B(\vect{d},\vect{m})$.
It follows that the orbit types of $\PU_d\curvearrowright\Sub_1(M_d)$ are parametrized by $T$:
\[
\Sub_1(M_d)/\PU_d 
\quad = \quad \Sub_1(M_d)/_\sim
\quad \cong \quad T_0/_\sim
\quad = \quad T.
\]

Given $[\vect{d},\vect{m}]\in T$, set
\[
E_{[\vect{d},\vect{m}]} := \big\{ \vect{a}\in E^{n+1}_d : C^*_1(\vect{a})\sim B(\vect{d},\vect{m}) \big\}.
\]
Then $E_{[\vect{d},\vect{m}]}$ is the submanifold of $E^{n+1}_d$ corresponding to orbit type $[\vect{d},\vect{m}]$, and the orbit type decomposition (as described in \autoref{pgr:orbitTypeDec}) for $\PU_d\curvearrowright E^{n+1}_d$ is:
\[
E^{n+1}_d = \bigcup_{[\vect{d},\vect{m}] \in T}  E_{[\vect{d},\vect{m}]}.
\]

By \autoref{prp:G_trivial_stabilizer}, a tuple $\vect{a}\in E^{n+1}_d$ has trivial stabilizer group if and only if $\vect{a}$ belongs to $G^{n+1}_d$.
It follows that $G^{n+1}_d = E_{[(d),(1)]}$, and in the notation of \autoref{pgr:orbitTypeDec}, with $M=E^{n+1}_d$, we have $M_{\mathrm{free}}=G^{n+1}_d$.
\end{pgr}

\begin{lma}
\label{prp:dimEdm}
Let $[\vect{d},\vect{m}]\in T$ with $[\vect{d},\vect{m}]\neq[(d),(1)]$.
Then $E_{[\vect{d},\vect{m}]}$ is a connected submanifold of $E^{n+1}_d$ satisfying	
\[
\dim( E_{[\vect{d},\vect{m}]} ) \leq (n+1)d^2 -2n(d-1).
\]
Further, $\dim( E_{[(d-1,1),(1,1)]} ) = (n+1)d^2 -2n(d-1)$.
\end{lma}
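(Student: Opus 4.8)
My plan is to compute $\dim(E_{[\vect{d},\vect{m}]})$ via the orbit–stabilizer decomposition, using the structure of $\Sub_1(M_d)$ established in \autoref{pgr:parametrization}. The key observation is that $E_{[\vect{d},\vect{m}]}$ is the set of tuples $\vect{a}$ whose generated unital subalgebra $C^*_1(\vect{a})$ is conjugate to $B:=B(\vect{d},\vect{m})$, and I will parametrize this set by decomposing the data into (i) the choice of a conjugate copy of $B$, and (ii) the choice of a tuple inside that copy that generates it. Concretely, I would consider the map
\[
\PU_d/\stab(B) \times \big(\Gen_{n+1}(B)_\sa \cap \text{(interior data)}\big) \longrightarrow E_{[\vect{d},\vect{m}]}, \quad ([u],\vect{b}) \longmapsto [u]\cdot\vect{b}.
\]
Since $B\cong\bigoplus_{j=1}^L M_{d_j}$ embedded with multiplicities $m_j$, the stabilizer $\stab(B)$ of $B$ under conjugation is the commutant unitary group modulo center, which by the block structure is isomorphic to $\prod_j \calU_{m_j}$ modulo the central circle. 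This gives $\dim \stab(B) = \sum_j m_j^2 - 1$, hence the orbit $\PU_d\cdot B$ through $B$ in $\Sub_1(M_d)$ has dimension $(d^2-1) - (\sum_j m_j^2 - 1) = d^2 - \sum_j m_j^2$.

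\textbf{Assembling the dimension count.}
For the fiber over a fixed copy of $B$, the relevant self-adjoint tuples live in $B^{n+1}_\sa$, which has dimension $(n+1)\dim_{\mathbb{R}}(B) = (n+1)\sum_j d_j^2$; the generating condition cuts out a dense open subset (by \autoref{prp:unitalGen}) and so does not lower the dimension. However, I must not double-count: tuples in $B_\sa^{n+1}$ that are fixed by a unitary in $\stab(B)$ would be counted multiple times, but since a generating tuple for $B$ has trivial stabilizer inside $B$'s own structure, the action of $\PU_d/\stab(B)$ on generating tuples is free, and the total dimension is the sum. Thus
\[
\dim(E_{[\vect{d},\vect{m}]})
= \underbrace{\Big(d^2 - \textstyle\sum_j m_j^2\Big)}_{\text{orbit of } B}
+ \underbrace{(n+1)\textstyle\sum_j d_j^2}_{\text{generators inside } B}.
\]
It remains to bound this by $(n+1)d^2 - 2n(d-1)$ for every $[\vect{d},\vect{m}]\neq[(d),(1)]$, subject to $\sum_j d_j m_j = d$. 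Rewriting $d^2 = (\sum_j d_j m_j)^2$ and the target bound as $(n+1)d^2 - 2n(d-1)$, the inequality to verify is
\[
\Big(\textstyle\sum_j d_jm_j\Big)^2 - \textstyle\sum_j m_j^2 + (n+1)\textstyle\sum_j d_j^2 \ \leq\ (n+1)\Big(\textstyle\sum_j d_jm_j\Big)^2 - 2n(d-1),
\]
which after rearranging becomes a statement purely about the combinatorial data $(\vect{d},\vect{m})$ and the parameter $n$.

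\textbf{The main obstacle and the extremal case.}
The hard part will be verifying this combinatorial inequality uniformly and confirming that equality is attained exactly at $[(d-1,1),(1,1)]$. I expect to reduce to showing that the "defect" $n\big[(\sum d_j m_j)^2 - \sum d_j^2\big] - \big[d^2 - \sum m_j^2 - \sum d_j^2\big]$ is at least $2n(d-1)$, and that among all nontrivial decompositions the one maximizing $\dim(E_{[\vect{d},\vect{m}]})$ is the "least degenerate" one — splitting off a single one-dimensional summand, i.e. $B\cong M_{d-1}\oplus M_1$ with multiplicities $(1,1)$. For this extremal case, $\sum_j d_j^2 = (d-1)^2 + 1$ and $\sum_j m_j^2 = 2$, giving
\[
\dim(E_{[(d-1,1),(1,1)]}) = \big(d^2 - 2\big) + (n+1)\big((d-1)^2+1\big),
\]
and I would check directly that this equals $(n+1)d^2 - 2n(d-1)$, pinning down the equality case. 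The connectedness of $E_{[\vect{d},\vect{m}]}$ follows because it is a single orbit-type stratum of a smooth compact Lie group action on the connected manifold $E^{n+1}_d$, realized as a bundle with connected fiber (the path-connected generating set of \autoref{prp:unitalGen}, or its evident analogue for $B$) over the connected orbit $\PU_d\cdot B \cong \PU_d/\stab(B)$; I would cite \autoref{pgr:orbitTypeDec} for the submanifold structure and argue connectedness of both base and fiber separately.
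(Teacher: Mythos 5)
There is a genuine gap, and it lies in your identification of the stabilizer of $B:=B(\vect{d},\vect{m})$ for the conjugation action $\PU_d\curvearrowright\Sub_1(M_d)$. That stabilizer is the \emph{normalizer} $N=\{[u]\in\PU_d: uBu^*=B\}$, not the image of $\calU(B')$. You computed the \emph{pointwise} stabilizer (the unitaries of the commutant, which by \autoref{prp:G_trivial_stabilizer} is the stabilizer of a generating \emph{tuple}), but $N$ also contains $\{[u]:u\in\calU(B)\}$, so $\dim(N)\geq \sum_j d_j^2-1$ rather than $\sum_j m_j^2-1$. Concretely, for $B=M_{d-1}\oplus\mathbb{C}$ the orbit $\PU_d\cdot B$ is parametrized by the line carrying the $\mathbb{C}$-summand, i.e.\ it is $\mathbb{CP}^{d-1}$ of real dimension $2d-2$, whereas your count gives $d^2-\sum_j m_j^2=d^2-2$. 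This error propagates: the parametrization $\PU_d/\stab(B)\times\Gen_{n+1}(B)_\sa\to E_{[\vect{d},\vect{m}]}$ with your $\stab(B)$ is far from generically injective, because a unitary of $B$ itself fixes the algebra $B$ while moving the tuple inside $B^{n+1}_\sa$; your remark about freeness of the action on tuples does not address this overcount. As a result your formula $\dim(E_{[\vect{d},\vect{m}]})=(d^2-\sum_j m_j^2)+(n+1)\sum_j d_j^2$ exceeds the correct value in the extremal case by $d(d-2)$, so the ``combinatorial inequality'' you propose to verify is actually \emph{false} for $d\geq 3$ at $[(d-1,1),(1,1)]$, and the claimed equality in that case fails.

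The repair is exactly the bookkeeping the paper carries out: writing $F=\{\vect{a}:C^*_1(\vect{a})=B\}$ (open in $B^{n+1}_\sa$, so of dimension $(n+1)\sum_j d_j^2$) and using the orbit-type dimension formula \cite[Theorem~IV.3.8]{Bre72CpctTransfGps} twice, one gets $\dim(E_{[\vect{d},\vect{m}]})=\dim(F)+\dim(\PU_d)-\dim(N)$ with $N$ the normalizer. The lower bound $\dim(N)\geq\sum_j d_j^2-1$ then yields $\dim(E_{[\vect{d},\vect{m}]})\leq d^2+n\sum_j d_j^2$, and since every proper unital subalgebra satisfies $\sum_j d_j^2\leq (d-1)^2+1$, the stated bound $(n+1)d^2-2n(d-1)$ follows, with equality for $B=M_{d-1}\oplus\mathbb{C}$ because there $\dim(N)=(d-1)^2$ exactly. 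Your connectedness argument (fiber bundle over a connected orbit with fiber connected by \autoref{prp:unitalGen}) is essentially the paper's and is fine.
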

\begin{proof}
Set $B:=B(\vect{d},\vect{m})$.
Note that a tuple $\vect{a}\in E^{n+1}_d$ belongs to $E_{[\vect{d},\vect{m}]}$ if and only if $C^*_1(\vect{a})\sim B$.
Set
\[
F:= \big\{ \vect{a}\in E^{n+1}_d : C^*_1(\vect{a})=B \big\}.
\]
By \autoref{prp:unitalGen}, $F$ is connected.
Since every orbit in $E_{[\vect{d},\vect{m}]}$ meets $F$, and since $\PU_d$ is connected, it follows that $E_{[\vect{d},\vect{m}]}$ is connected as well.

By \cite[Theorem~IV.3.8]{Bre72CpctTransfGps}, if a compact Lie group $L$ acts smoothly on a connected manifold $M$ such that all orbits have the same type, then $\dim(M)=\dim(M/L)+\dim(L/K)$, where $K$ is the stabilizer subgroup of any element in $M$.

Let $K\subseteq\PU_d$ be the stabilizer subgroup of some element in $F$.
By considering the restricted action $\PU_d\curvearrowright E_{[\vect{d},\vect{m}]}$, we obtain that
\[
\dim( E_{[\vect{d},\vect{m}]} ) = \dim( E_{[\vect{d},\vect{m}]}/\PU_d ) + \dim( \PU_d/ K ).
\]

Closed subgroups of Lie groups are again Lie groups.
It follows that $K$ is a Lie group as well.
Since $K$ is acting freely on the connected manifold $\PU_d$ with only one orbit type, we also get $\dim( \PU_d/ K ) = \dim( \PU_d ) - \dim(K)$ and thus
\begin{align}
\label{eq:dimE-1}
\dim( E_{[\vect{d},\vect{m}]} )
= \dim( E_{[\vect{d},\vect{m}]}/\PU_d ) + \dim( \PU_d ) - \dim(K).
\end{align}

Set
\[
N:= \big\{ [u]\in\PU_d : uBu^*=B \big\},
\]
which is a closed subgroup of $\PU_d$.
Given $\vect{a}\in F$ and $[u]\in\PU_d$, we have $[u].\vect{a}\in F$ if and only if $[u]\in N$.
It follows that $N$ naturally acts on $F$.
Further, for each $\vect{a}\in F$, the $N$-orbit $N.\vect{a}$ agrees with $\PU_d.\vect{a}\cap F$.
Since every $\PU_d$-orbit in $E_{[\vect{d},\vect{m}]}$ meets $F$, we deduce that $E_{[\vect{d},\vect{m}]} / \PU_d \cong F / N$.
Note that $B^{n+1}_\sa$ is a linear space. 
By \autoref{prp:unitalGen}, $F$ is an open subset of $B^{n+1}_\sa$.
It follows that $F$ is a manifold satisfying
\[
\dim(F)
= \dim(B^{n+1}_\sa) 
= (n+1)\sum_{j=1}^L d_j^2.
\]

Analogous to \eqref{eq:dimE-1}, by considering the action of the compact Lie group $N$ on $F$, we obtain
\begin{align}
\label{eq:dimE-2}
\dim( F )
= \dim( F/N ) + \dim( N ) - \dim(K).
\end{align}

Note that $N$ contains $\{[u]:u\in\calU(B)\}$, which implies that
\[
\dim(N)\geq \left(\sum_{j=1}^L d_j^2 \right) - 1.
\]

Combining this estimate with \eqref{eq:dimE-1} and \eqref{eq:dimE-2}, using that $E_{[\vect{d},\vect{m}]} / \PU_d \cong F / N$, and that $[\vect{d},\vect{m}]\neq[(d),(1)]$, we get
\begin{align*}
\dim( E_{[\vect{d},\vect{m}]} )
&= \dim( F ) + \dim( \PU_d ) - \dim( N ) \\
&\leq \left( (n+1)\sum_{j=1}^L d_j^2 \right) 
+ \left( d^2 - 1 \right) 
- \left( \left(\sum_{j=1}^L d_j^2\right) -1 \right) \\
&= d^2 + n\sum_{j=1}^L d_j^2 \\
&\leq d^2 + n((d-1)^2+1)
= (n+1)d^2 -2n(d-1).
\end{align*}

For $[\vect{d},\vect{m}]=[(d-1,1),(1,1)]$ we have $B(\vect{d},\vect{m})\cong M_{d-1}\oplus\mathbb{C}\subseteq M_d$.
In this case, we get $N=\{[u]\in\PU_d: u\in\calU(M_{d-1}\oplus\mathbb{C})\}$ and thus $\dim(N)=(d-1)^2+1-1=(d-1)^2$.
It follows that
\begin{align*}
\dim( E_{[(d-1,1),(1,1)]} ) 
&= \dim( F ) + \dim( \PU_d ) - \dim( N ) \\
&= (n+1)((d-1)^2+1) + (d^2-1) - (d-1)^2 \\
&= (n+1)d^2 -2n(d-1). \qedhere
\end{align*}
\end{proof}

\begin{lma}
\label{prp:CXG-dense}
Let $X$ be a compact, Hausdorff space, $d\geq 2$, and $n\in\NN$.
Then the following are equivalent:
\begin{enumerate}[(1)  ]
\item
$C(X,G^{n+1}_d)\subseteq C(X,E^{n+1}_d)$ is dense;
\item
$\dim(X) < 2n(d-1)$.
\end{enumerate}
\end{lma}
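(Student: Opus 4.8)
The plan is to identify this as an instance of the abstract avoidance result \autoref{prp:avoidSubmanifolds}, applied to the action $\PU_d\curvearrowright E^{n+1}_d$. Recall from \autoref{pgr:parametrization} that under this action we have $M=E^{n+1}_d$ and $M_{\mathrm{free}}=G^{n+1}_d$, the orbit type set $T$ is finite, and $M\neq M_{\mathrm{free}}\neq\emptyset$ holds precisely when $d\geq 2$ and $n\geq 1$ (for $n\geq 1$ a single generator tuple exists, and $E^{n+1}_d$ visibly contains nongenerating tuples). Since $E^{n+1}_d$ is a finite-dimensional real vector space it is metrizable, so all hypotheses of \autoref{prp:avoidSubmanifolds} are met. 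That proposition then tells us that $C(X,G^{n+1}_d)$ is dense in $C(X,E^{n+1}_d)$ if and only if $\dim(X)<\dim(E^{n+1}_d)-\dim(E^{n+1}_d\setminus G^{n+1}_d)$, so the whole lemma reduces to computing this codimension.

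**Computing the codimension.**
The dimension count is exactly what \autoref{prp:dimEdm} provides. We have $\dim(E^{n+1}_d)=(n+1)d^2$, and
\[
\dim(E^{n+1}_d\setminus G^{n+1}_d) = \max\big\{ \dim(E_{[\vect{d},\vect{m}]}) : [\vect{d},\vect{m}]\neq[(d),(1)] \big\},
\]
since $E^{n+1}_d\setminus G^{n+1}_d$ is the union of the finitely many nonfree orbit-type submanifolds $E_{[\vect{d},\vect{m}]}$. By \autoref{prp:dimEdm} each such submanifold satisfies $\dim(E_{[\vect{d},\vect{m}]})\leq (n+1)d^2-2n(d-1)$, with equality attained at $[(d-1,1),(1,1)]$. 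Hence $\dim(E^{n+1}_d\setminus G^{n+1}_d)=(n+1)d^2-2n(d-1)$, and therefore
\[
\dim(E^{n+1}_d)-\dim(E^{n+1}_d\setminus G^{n+1}_d) = (n+1)d^2 - \big((n+1)d^2-2n(d-1)\big) = 2n(d-1).
\]
Substituting into the criterion from \autoref{prp:avoidSubmanifolds} yields that density holds if and only if $\dim(X)<2n(d-1)$, which is precisely the claimed equivalence between~(1) and~(2).

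**Handling the boundary case and the main obstacle.**
The only point requiring care is the edge case $n=0$, where \autoref{prp:avoidSubmanifolds} does not directly apply (its path-connectedness input and the hypothesis $M\neq M_{\mathrm{free}}$ interact with the trivial structure when $n=0$, and in any event $G^1_d=\emptyset$ for $d\geq 2$ since a single self-adjoint matrix never generates $M_d$). For $n=0$ the right-hand inequality $\dim(X)<0$ is never satisfied, matching the fact that $C(X,G^1_d)=\emptyset$ cannot be dense; I would dispose of this case by a direct remark before invoking \autoref{prp:avoidSubmanifolds} for $n\geq 1$. I expect the main obstacle to be purely bookkeeping rather than conceptual: one must confirm that the hypotheses of \autoref{prp:avoidSubmanifolds} genuinely hold, in particular that the nonfree stratum is nonempty (so that $M\neq M_{\mathrm{free}}$), which requires $d\geq 2$ and the existence of at least one proper unital $C^*$-subalgebra realized as some $C^*_1(\vect{a})$ — guaranteed by the parametrization in \autoref{pgr:parametrization}. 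Once the reduction is set up, the proof is essentially an application of the two preceding results.
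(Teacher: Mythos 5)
Your proposal is correct and follows essentially the same route as the paper: reduce to \autoref{prp:avoidSubmanifolds} for the action $\PU_d\curvearrowright E^{n+1}_d$ with $M_{\mathrm{free}}=G^{n+1}_d$, then compute the codimension $2n(d-1)$ from \autoref{prp:dimEdm}. Your explicit separate treatment of the $n=0$ case (where $G^1_d=\emptyset$ and the hypothesis $M_{\mathrm{free}}\neq\emptyset$ of \autoref{prp:avoidSubmanifolds} fails) is a small extra care the paper leaves implicit, but it does not change the argument.
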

\begin{proof}
We use the notation from \autoref{pgr:parametrization}.
The orbit type decomposition for the action $\PU_d\curvearrowright E^{n+1}_d$ is
\[
E^{n+1}_d = \bigcup_{[\vect{d},\vect{m}] \in T}  E_{[\vect{d},\vect{m}]}, \quad
E_{[\vect{d},\vect{m}]} := \big\{ \vect{a}\in E^{n+1}_d : C^*_1(\vect{a})\sim B(\vect{d},\vect{m}) \big\}.
\]
Further, $G^{n+1}_d = E_{[(d),(1)]}$, which is the submanifold of orbits with trivial stabilizers.
In the notation of \autoref{pgr:orbitTypeDec}, with $M=E^{n+1}_d$, we have $M_{\mathrm{free}}=G^{n+1}_d$.

Applying \autoref{prp:avoidSubmanifolds}, we obtain that $C(X,G^{n+1}_d)\subseteq C(X,E^{n+1}_d)$ is dense if and only if
\[
\dim(X) < \dim(E^{n+1}_d) - \dim(E^{n+1}_d\setminus G^{n+1}_d).
\]
Since $E^{n+1}_d\setminus G^{n+1}_d$ is the finite union of $E_{[\vect{d},\vect{m}]}$ for $[\vect{d},\vect{m}]\neq[(d),(1)]$, we obtain from \autoref{prp:dimEdm}
\[
\dim(E^{n+1}_d\setminus G^{n+1}_d)
= \max_{[\vect{d},\vect{m}]\neq[(d),(1)]} \dim( E_{[\vect{d},\vect{m}]} )
= (n+1)d^2 -2n(d-1).
\]
Now the result follows using that $\dim(E^{n+1}_d)=(n+1)d^2$.
\end{proof}

The next result provides the answer to question~(a) from \autoref{pgr:homApproach}.
Recall that $\Gen^{\mathrm{fiber}}_{n+1}(A)_\sa$ denotes the set of tuples that are fiberwise generators.

\begin{lma}
\label{prp:hom_approx_fiberwise_gen}
Let $A$ be a unital, separable $d$-homogeneous \ca{}, $d\geq 2$, and $n\in\NN$.
Then $\Gen^{\mathrm{fiber}}_{n+1}(A)_\sa\subseteq A^{n+1}_\sa$ is open.
Further, the following are equivalent:
\begin{enumerate}
\item
$\Gen^{\mathrm{fiber}}_{n+1}(A)_\sa$ is dense in $A^{n+1}_\sa$;
\item
$\dim(\Prim(A)) < 2n(d-1)$.
\end{enumerate}
\end{lma}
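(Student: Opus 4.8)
The plan is to reduce the two assertions to statements about the fibrewise structure that has been carefully set up in \autoref{pgr:homApproach}, and then to feed those into \autoref{prp:CXG-dense}. Concretely, recall that $A$ is a unital, separable, $d$-homogeneous \ca{}, so by \autoref{pgr:hom} it carries a continuous $C(X)$-algebra structure with $X=\Prim(A)$ compact and metrizable, and with every fibre isomorphic to $M_d$. The point of the proof is that the property ``$\vect{a}$ generates every fibre'' is, locally on $X$, exactly the same as the property ``a continuous map into $E^{n+1}_d$ takes values in $G^{n+1}_d$'', and that denseness and openness of such maps are governed by the geometry of $E^{n+1}_d\setminus G^{n+1}_d$ that we already analysed.

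For \textbf{openness}, the key observation is that $G^{n+1}_d$ is open in $E^{n+1}_d$ (this is part of \autoref{prp:unitalGen}, since $G^{n+1}_d$ is the complement of the closed union of proper unital subalgebras). First I would fix $\vect{a}\in\Gen^{\mathrm{fiber}}_{n+1}(A)_\sa$. For each $x$, the fibre value $\vect{a}(x)$ lies in $G^{n+1}_d$, and because $G^{n+1}_d$ is open and the assignment $x\mapsto\vect{a}(x)$ is continuous (after locally trivialising the $M_d$-bundle), the generating condition persists under small perturbations in a neighbourhood of $x$. A compactness argument over $X$ then upgrades this to a uniform $\varepsilon>0$ such that any $\vect{b}=_\varepsilon\vect{a}$ again generates every fibre; hence $\Gen^{\mathrm{fiber}}_{n+1}(A)_\sa$ is open. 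The only mild subtlety is that the local trivialisations do not glue globally, so I would phrase the argument using the continuous function $x\mapsto d_{M_d/\PU_d}$-distance of $\psi_x(\vect{a})$ to the (closed, $\PU_d$-invariant) complement $E^{n+1}_d\setminus G^{n+1}_d$, which descends to a genuine function on $X$ by invariance and is therefore well defined independent of trivialisation.

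For the \textbf{equivalence}, I would first treat the untwisted model $A=C(X,M_d)$, where $A^{n+1}_\sa=C(X,E^{n+1}_d)$ and $\Gen^{\mathrm{fiber}}_{n+1}(A)_\sa=C(X,G^{n+1}_d)$ literally; there the equivalence of (1) and (2) is exactly \autoref{prp:CXG-dense}, giving the threshold $\dim(X)<2n(d-1)$. The general $d$-homogeneous case is reduced to this model by the local triviality of the bundle together with the Lebesgue-type covering argument: cover $X$ by finitely many closed sets on which $A$ trivialises, approximate $\vect{a}$ fibrewise-generically on each piece using the untwisted result, and patch. This is precisely the role that \autoref{prp:LShomUntwisted} and \autoref{prp:LShom} were set up to play in the nonseparable bookkeeping, but at the level of a single compact, metrizable $X$ one can argue directly. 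The forward direction (denseness $\Rightarrow$ dimension bound) is the easier half, since failure of the dimension inequality produces, already on a compact subset, a map into $E^{n+1}_d$ that cannot be pushed off the top-codimension stratum $E_{[(d-1,1),(1,1)]}$, and this obstruction survives twisting.

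The main obstacle I anticipate is the patching in the twisted case: reconciling the local trivialisations of the $M_d$-bundle so that the fibrewise approximations produced by \autoref{prp:CXG-dense} on overlapping charts can be glued into a global element of $\Gen^{\mathrm{fiber}}_{n+1}(A)_\sa$ without destroying the generation property on the overlaps. I expect to handle this by working with the quotient manifold $G^{n+1}_d/\PU_d$, into which the invariantly-defined data $\Psi(\vect{a})$ lands, thereby dividing out the trivialisation ambiguity; the denseness question then becomes one about approximating a single continuous map $X\to E^{n+1}_d/\PU_d$ by maps into $G^{n+1}_d/\PU_d$, and the dimension count transfers intact because $\PU_d$ acts with constant orbit dimension $d^2-1$ on the relevant strata. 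The homotopy-lifting machinery of \autoref{prp:HELP} is what ultimately guarantees that an approximation downstairs can be realised by an honest perturbation of $\vect{a}$ upstairs, closing the argument.
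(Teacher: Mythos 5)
Your openness argument and your reduction of the equivalence to \autoref{prp:CXG-dense} via a finite closed cover trivializing the $M_d$-bundle are both correct and match the paper's proof up to that point. The gap is in the direction (2)$\Rightarrow$(1) for the twisted case: the ``patching'' problem you identify as the main obstacle does not actually need to be solved, and the mechanism you propose for it would not work. The missing observation is simple: with closed sets $X_1,\dots,X_m$ covering $X$ on which $A(X_j)\cong C(X_j,M_d)$, the set $\Gen^{\mathrm{fiber}}_{n+1}(A)_\sa$ is exactly the \emph{finite} intersection $\bigcap_j\pi_j^{-1}\bigl(C(X_j,G^{n+1}_d)\bigr)$, and each of these sets is open in $A^{n+1}_\sa$ (by openness of $G^{n+1}_d$ and compactness of $X_j$) and, when $\dim(X)<2n(d-1)$, dense --- denseness of $C(X_j,G^{n+1}_d)$ in $C(X_j,E^{n+1}_d)$ from \autoref{prp:CXG-dense} pulls back along $\pi_j$ because a small perturbation in the quotient lifts to an equally small self-adjoint perturbation in $A$. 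A finite intersection of open dense sets is dense, and that is the entire argument; no local approximations ever have to be glued across overlaps.

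By contrast, your proposed route through the orbit space has two genuine defects. First, $E^{n+1}_d/\PU_d$ is not a manifold, and the orbit dimension is \emph{not} constant across strata: a tuple outside $G^{n+1}_d$ has positive-dimensional stabilizer $K$, so the codimension of the singular strata in the quotient drops by $\dim K$ relative to the codimension upstairs, and \autoref{prp:avoidSubmanifolds} (which requires $M$ to be a manifold) does not apply to the quotient; the dimension count does not ``transfer intact.'' Second, \autoref{prp:HELP} is a lifting statement for the principal bundle $G^{n+1}_d\to G^{n+1}_d/\PU_d$ over the free part only: it presupposes that the map downstairs already lands in $G^{n+1}_d/\PU_d$, i.e.\ that the tuple is already fibrewise generating, so it cannot be the tool that produces a fibrewise-generating approximation of an arbitrary tuple. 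In the paper that machinery is used only afterwards, in \autoref{prp:gr_hom_sep}, to upgrade an already fibrewise-generating tuple to one whose image under $\Psi$ separates the points of $X$.
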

\begin{proof}
Set $X:=\Prim(A)$. 
Since $X$ is compact, and since the $M_d$-bundle associated to $A$ is locally trivial, we can choose closed subsets $X_1,\ldots,X_m\subseteq X$ that cover $X$ and such that $A(X_j)\cong C(X_j,M_d)$ for each $j$.
Let $\pi_j\colon A\to C(X_j,M_d)$ be the corresponding quotient map, which induces a natural map $A^{n+1}_\sa\to C(X_j,M_d)^{n+1}_\sa \cong C(X_j,E^{n+1}_d)$ that we also denote by $\pi_j$.

A tuple $\vect{a}\in A^{n+1}_\sa$ belongs to $\Gen^{\mathrm{fiber}}_{n+1}(A)_\sa$ if and only if $\pi_j(\vect{a})$ belongs to $C(X_j,G^{n+1}_d)$ for each $j$.
It follows from \autoref{prp:unitalGen} that $G^{n+1}_d\subseteq E^{n+1}_d$ is open.
Since $X$ is compact, we obtain that $C(X_j,G^{n+1}_d)\subseteq C(X_j,E^{n+1}_d)$ is always open.
Hence, $\Gen^{\mathrm{fiber}}_{n+1}(A)_\sa\subseteq A^{n+1}_\sa$ is open.

Since the intersection of finitely many open dense sets is again dense, we see that~(1) holds if and only if $C(X_j,G^{n+1}_d)\subseteq C(X_j,E^{n+1}_d)$ is dense for each~$j$.
By \autoref{prp:CXG-dense}, this is in turn equivalent to $\dim(X_j)<2n(d-1)$ for each $j$.
Using that $\dim(X)=\max_j\dim(X_j)$, this is finally equivalent to~(2).
\end{proof}

\begin{lma}
\label{prp:HELP}
Let $X$ be a compact, metric space, let $Y\subseteq X$ be closed, and let $F$ and $\widetilde{F}$ be continuous maps as in the diagram below such that $q\circ\widetilde{F}$ agrees with $F$ on $(Y\times[0,1])\cup (X\times\{0\})$.
\[
\xymatrix@R-5pt@C+20pt{
(Y\times[0,1])\cup (X\times\{0\})  \ar[r]^{\widetilde{F}} \ar@{^{(}->}[d] & G^{n+1}_d \ar[dd]^q \\
(Y\times[0,1])\cup (X\times [0,t])  \ar@{-->}[ur]_{\widetilde{H}} \ar@{^{(}->}[d]  \\
X\times[0,1] \ar[r]^{F} 
& G^{n+1}_d/\PU_d.
}
\]
Then there exist $t>0$ and a continuous map $\widetilde{H}$ making the above diagram commute.
\end{lma}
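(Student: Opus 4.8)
The plan is to reinterpret the lifting problem as a \emph{section-extension} problem for a principal bundle and then to extend the given section over a small neighborhood of its domain. Since $\PU_d$ acts freely on $G^{n+1}_d$ (by \autoref{prp:G_trivial_stabilizer} every tuple in $G^{n+1}_d$ has trivial stabilizer, so $G^{n+1}_d=M_{\mathrm{free}}$ in the notation of \autoref{pgr:parametrization}) and $\PU_d$ is a compact Lie group, the action is free and proper; by the slice theorem the quotient $G^{n+1}_d/\PU_d$ is a manifold (as already noted in \autoref{pgr:homApproach}) and $q\colon G^{n+1}_d\to G^{n+1}_d/\PU_d$ is a locally trivial principal $\PU_d$-bundle. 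Its fiber is the compact Lie group $\PU_d$, a smooth manifold and hence an absolute neighborhood retract. A continuous lift of $F$ over a subset $S\subseteq X\times[0,1]$ is exactly a section over $S$ of the pullback $P:=F^*G^{n+1}_d$, which is again a locally trivial bundle over $X\times[0,1]$ with fiber $\PU_d$; under this dictionary $\widetilde{F}$ is a section $s_0$ of $P$ over the closed set $A:=(Y\times[0,1])\cup(X\times\{0\})$, and the task is to extend $s_0$ to a section over $(Y\times[0,1])\cup(X\times[0,t])$.

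First I would extend $s_0$ to a section of $P$ over an \emph{open neighborhood} $U$ of $A$ in $X\times[0,1]$. Choosing a trivializing cover of $X\times[0,1]$, over each chart the section $s_0$ corresponds to a $\PU_d$-valued map on the closed trace of $A$ in that chart, which extends over a neighborhood inside the chart precisely because $\PU_d$ is an ANR; these local extensions are then glued into a section over a neighborhood of $A$. Granting such a neighborhood extension, the conclusion is immediate: $Y\times[0,1]\subseteq A\subseteq U$, while $X\times\{0\}$ is compact and contained in the open set $U$, so the tube lemma yields $t>0$ with $X\times[0,t]\subseteq U$. Hence $(Y\times[0,1])\cup(X\times[0,t])\subseteq U$, and restricting the extended section to this set produces the desired map $\widetilde{H}$, which by construction restricts to $\widetilde{F}=s_0$ on $A$ and satisfies $q\circ\widetilde{H}=F$.

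The hard part is the gluing of the local extensions into a genuine section over a neighborhood of $A$, since $\PU_d$ is non-abelian and the prescribed section on $Y\times[0,1]$ is arbitrary; in particular it need not be horizontal for any connection, so path-lifting arguments cannot be used to respect it. The point that makes the gluing work is that all local extensions agree with $s_0$ \emph{on} $A$, so on a sufficiently small neighborhood of $A$ any two of them differ, in the principal-bundle sense, by a $\PU_d$-valued transition taking values near the identity. Transporting these transitions into a chart of $\PU_d$ at the identity via the exponential map, one can interpolate them using a partition of unity subordinate to the trivializing cover, exactly as in the standard proof that a fiber bundle with ANR fiber over a metric base admits neighborhood extensions of sections from closed sets. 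This yields the section over $U$ and completes the argument.
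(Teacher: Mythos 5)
Your global strategy --- extend the partial section of the pullback bundle $F^*G^{n+1}_d$ over a neighborhood $U$ of $A=(Y\times[0,1])\cup(X\times\{0\})$ and then use compactness of $X\times\{0\}$ to find $t>0$ with $X\times[0,t]\subseteq U$ --- is sound, and that last compactness step is exactly how the paper also produces $t$. The problem sits in the step you yourself flag as the hard part, and as written it does not go through. First, the claim that any two local extensions differ by a transition taking values near the identity ``on a sufficiently small neighborhood of $A$'' is unjustified and in fact false in general: the transition $g_{ij}$ between the extensions over charts $V_i$ and $V_j$ is pinned to $1$ only on $A\cap V_i\cap V_j$, and a point of $V_i\cap V_j$ can be arbitrarily close to $A$ while staying at bounded distance from $A\cap V_i\cap V_j$ (its nearest points of $A$ may lie just outside $V_j$); at such points $g_{ij}$ is not controlled. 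Repairing this needs a closed shrinking of the cover and a careful induction over its (finitely many) members. Second, ``interpolating the transitions with a partition of unity'' is not a well-defined operation for a non-abelian structure group; what one can do is average the pairwise-close section values themselves via a center-of-mass construction for a bi-invariant metric on $\PU_d$, but none of that is set up. The neighborhood section extension property you invoke is true, but you neither prove it correctly nor cite a precise reference, so the argument has a genuine gap.

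The paper avoids the gluing altogether by a different reduction. It first applies the homotopy lifting property of the fiber bundle $q$ to get a single global lift $H\colon X\times[0,1]\to G^{n+1}_d$ of $F$ with $H(\freeVar,0)=\widetilde{F}(\freeVar,0)$. On $(Y\times[0,1])\cup(X\times\{0\})$ the two lifts $H$ and $\widetilde{F}$ of $F$ then differ by one globally defined continuous map $c$ into $\PU_d$ (uniqueness from freeness of the action, continuity from local triviality), equal to $1$ on $X\times\{0\}$. Extending $c$ over a neighborhood is a one-shot application of the fact that the Lie group $\PU_d$ is an absolute neighborhood extensor --- no cover, no compatibility conditions --- and correcting $H$ by this extension yields $\widetilde{H}$. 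If you want to salvage your write-up, either carry out the cover induction in detail or switch to this lift-then-correct argument.
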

\begin{proof}
Using that the action $\PU_d\curvearrowright G^{n+1}_d$ is free, it follows that the quotient map $q\colon G^{n+1}_d\to G^{n+1}_d/\PU_d$ is the projection of a fiber bundle with base space $G^{n+1}_d/\PU_d$ and with fibers homeomorphic to $\PU_d$.
Using the homotopy lifting property for fiber bundles, we obtain $H\colon X\times[0,1]\to G^{n+1}_d$ such that
\[
q\circ H = F, \quad\text{ and }\quad
H(x,0) = \tilde{F}(x,0),\quad \text{for } x\in X.
\]
Next, we will correct $H$ to agree with $\widetilde{F}$ on $Y\times[0,t]$ for some $t>0$.

Given $(y,s)\in Y\times[0,1]$, we have
\[
q(H(y,s))
= F(y,s)
= q(\widetilde{F}(y,s)). 
\]
Let $c(y,s)\in\PU_d$ be the unique element such that $H(y,s)=c(y,s).\widetilde{F}(y,s)$.
This defines a map $c\colon Y\times[0,1]\to\PU_d$.
Using that the fiber bundle is locally trivial, we see that $c$ is continuous.
For every $y\in Y$, we have $H(y,0)=\widetilde{F}(y,0)$ and therefore $c(y,0)=1$.
We extend $c$ to a map $c\colon (Y\times[0,1])\cup (X\times\{0\})\to\PU_d$ by setting $c(x,0):=1$ for every $x\in X$.

Every Lie group is a (metrizable) locally contractible, finite-dimensional space and therefore an absolute neighborhood extensor;
see Theorems~4.2.33 and~1.2.7 in \cite{vMi01TopFunctionSp}.
This allows us to extend $c$ to a continuous map $\tilde{c}\colon U\to\PU_d$ defined on a neighborhood $U$ of $(Y\times[0,1])\cup (X\times\{0\})\subseteq X\times[0,1]$.
Then define $\widetilde{H}\colon U\to G^{n+1}_d$ by
\[
\widetilde{H}(x,s) := \tilde{c}(x,s).\widetilde{F}(x,s),\quad \text{for } (x,s)\in U\subseteq X\times[0,1].
\]

Choose $t>0$ such that $(Y\times[0,1])\cup (X\times [0,t])\subseteq U$.
Then the restriction of $\widetilde{H}$ to $(Y\times[0,1])\cup (X\times [0,t])$ has the desired properties.
\end{proof}

\begin{lma}
\label{prp:gr_hom_sep}
Let $A$ be a unital, separable $d$-homogeneous \ca{}, $d\geq 2$.
Then
\[
\gr(A) = \left\lceil\frac{\dim(\Prim(A))+1}{2d-2}\right\rceil.
\]
\end{lma}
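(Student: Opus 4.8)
The plan is to compute $\gr(A)$ by combining the permanence-and-reduction machinery of the earlier sections with the concrete geometric results just established. Recall from \autoref{pgr:homApproach} that a tuple $\vect{a}\in A^{n+1}_\sa$ generates $A$ precisely when it is fiberwise generating (condition~(a)) and its induced map $\Psi(\vect{a})\colon X\to G^{n+1}_d/\PU_d$ is injective (condition~(b)), where $X:=\Prim(A)$. By \autoref{prp:grSep}, it suffices to determine for which $n$ the set $\Gen_{n+1}(A)_\sa$ is dense in $A^{n+1}_\sa$, and the value of $\gr(A)$ is then the least such $n$. Setting $l:=\dim(X)$, I would show that $\Gen_{n+1}(A)_\sa$ is dense if and only if $l<2n(d-1)$, equivalently $n\geq\lceil(l+1)/(2d-2)\rceil$; taking the smallest such $n$ gives the stated formula.

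First I would establish the \emph{sufficiency} direction: assuming $l<2n(d-1)$, show $\Gen_{n+1}(A)_\sa$ is dense. The argument proceeds in two stages mirroring questions~(a) and~(b). For stage~(a), \autoref{prp:hom_approx_fiberwise_gen} gives exactly that $\Gen^{\mathrm{fiber}}_{n+1}(A)_\sa$ is open and dense under the hypothesis $l<2n(d-1)$. For stage~(b), starting from a tuple in $\Gen^{\mathrm{fiber}}_{n+1}(A)_\sa$, I must perturb it so that $\Psi$ maps it to an injective map. The key dimension count is that $G_d^{n+1}/\PU_d$ is a manifold of dimension $nd^2+1$ (computed in \autoref{pgr:homApproach}), and since injective maps $X\to G_d^{n+1}/\PU_d$ are dense when $\dim(X\times X)<\dim(G_d^{n+1}/\PU_d)$ by Luukkainen's theorem, I would verify this inequality holds for $d\geq 2$ whenever $l<2n(d-1)$ — this is where the homogeneous case differs from the commutative one, as condition~(a) rather than~(b) dominates. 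The crucial technical step is then to \emph{lift} the approximating injective map back to a nearby tuple in $A^{n+1}_\sa$; this is precisely what the homotopy extension lifting property \autoref{prp:HELP} is designed for, applied along a homotopy from $\Psi(\vect{a})$ to an injective map. Combining the open-denseness from stage~(a) with the perturbation from stage~(b) yields density of $\Gen_{n+1}(A)_\sa$.

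For the \emph{necessity} direction, I would assume $l\geq 2n(d-1)$ and show $\Gen_{n+1}(A)_\sa$ is not dense, so $\gr(A)>n$. Here \autoref{prp:hom_approx_fiberwise_gen} again does the work: the hypothesis means condition~(1) of that lemma fails, so $\Gen^{\mathrm{fiber}}_{n+1}(A)_\sa$ is not dense in $A^{n+1}_\sa$. Since $\Gen_{n+1}(A)_\sa\subseteq\Gen^{\mathrm{fiber}}_{n+1}(A)_\sa$, the smaller set cannot be dense either. This gives the lower bound $\gr(A)\geq\lceil(l+1)/(2d-2)\rceil$ cleanly, without needing the separation condition~(b) at all.

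The main obstacle I anticipate is the gluing step in stage~(b): passing from a local, fiberwise picture over trivializing patches $X_j$ to a coherent global perturbation of $\vect{a}$ that simultaneously stays fiberwise-generating and achieves global injectivity of $\Psi(\vect{a})$. The subtlety is that $A$ need not be globally trivial as an $M_d$-bundle, so $\Psi$ genuinely takes values in the quotient $G_d^{n+1}/\PU_d$ rather than in $G_d^{n+1}$ itself, and one cannot simply perturb coordinatewise. The homotopy extension lifting property \autoref{prp:HELP} is the device that overcomes this, letting me lift a homotopy on the base through the principal $\PU_d$-bundle $q\colon G_d^{n+1}\to G_d^{n+1}/\PU_d$ while controlling the perturbation; I expect the bookkeeping here — ensuring the lift remains close to the original tuple and that the perturbation preserves the open condition of fiberwise generation — to be the most delicate part of the proof.
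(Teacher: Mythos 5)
Your proposal follows essentially the same route as the paper's proof: reduce to the equivalence $\gr(A)\leq n \Leftrightarrow \dim(\Prim(A))<2n(d-1)$, get the lower bound from the failure of fiberwise density (\autoref{prp:hom_approx_fiberwise_gen}), and get the upper bound by combining fiberwise density with Luukkainen's embedding theorem for $G^{n+1}_d/\PU_d$ and the homotopy extension lifting property (\autoref{prp:HELP}) to realize an approximating embedding as $\Psi(\vect{b})$ over the trivializing patches. You also correctly single out the patching of local lifts along the cover as the delicate step, which is exactly where the paper expends its effort (Steps~2--4 of its proof).
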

\begin{proof}
Set $X:=\Prim(A)$.
Since $A$ is noncommuative, we have $\gr(A)\geq 1$ by \cite[Proposition~5.7]{Thi21GenRnk}.
We also have $\left\lceil\frac{\dim(X)+1}{2d-2}\right\rceil\geq 1$ for every value of $\dim(X)$.
Thus, it is enough to show that for every $n\geq 1$ the following holds:
\[
\gr(A)\leq n
\quad\Leftrightarrow\quad
\dim(X)<2n(d-1).
\]

Recall that we use $E(X,G^{n+1}_d/\PU_d)$ to denote the set of \emph{injective} continuous maps $X\to G^{n+1}_d/\PU_d$.
As explained in \autoref{pgr:homApproach}, we have the following inclusions and maps:
\[
\xymatrix@R-10pt{
\Gen_{n+1}(A)_\sa \ar@{}[r]|{\subseteq} \ar[d]^{\Psi} 
& \Gen^{\mathrm{fiber}}_{n+1}(A)_\sa \ar@{}[r]|{\subseteq} \ar[d]^{\Psi} & A^{n+1}_\sa \\
E(X,G^{n+1}_d/\PU_d) \ar@{}[r]|{\subseteq} & C(X,G^{n+1}_d/\PU_d).
}
\]

Assume that $\gr(A)\leq n$.
Since $A$ is separable, it follows from \autoref{prp:grSep} that $\Gen_{n+1}(A)_\sa\subseteq A^{n+1}_\sa$ is dense.
Since $\Gen_{n+1}(A)_\sa\subseteq \Gen^{\mathrm{fiber}}_{n+1}(A)_\sa$, we deduce from \autoref{prp:hom_approx_fiberwise_gen} that $\dim(X)<2n(d-1)$.

Conversely, assume that $\dim(X)<2n(d-1)$.
Applying \autoref{prp:hom_approx_fiberwise_gen}, we see that $\Gen^{\mathrm{fiber}}_{n+1}(A)_\sa\subseteq A^{n+1}_\sa$ is dense and open.
Further, by \autoref{prp:generators_C(X)-alg}, a tuple $\vect{a}\in \Gen^{\mathrm{fiber}}_{n+1}(A)_\sa$ belongs to $\Gen_{n+1}(A)_\sa$ if and only if $\Psi(\vect{a})$ belongs to $E(X,G^{n+1}_d/\PU_d)$.
Thus, to verify $\gr(A)\leq n$, it suffices to show the following:

\emph{
Let $\vect{a}\in\Gen^{\mathrm{fiber}}_{n+1}(A)_\sa$ and $\varepsilon>0$.
Then there exists $\vect{b}\in\Gen^{\mathrm{fiber}}_{n+1}(A)_\sa$ such that
\[
\vect{b}=_\varepsilon\vect{a}, \quad\text{ and }\quad 
\Psi(\vect{b})\in E(X,G^{n+1}_d/\PU_d).
\]
}

By \cite[Theorem~5.1]{Luu81ApproxByEmb}, if $M$ is a metrizable manifold with $2\dim(X)<\dim(M)$, then $E(X,M)\subseteq C(X,M)$ is dense with respect to the metric $d(f,g)=\sup\{d_M(f(x),g(x)):x\in X\}$, where $d_M$ is a metric inducing the topology on $M$.

By \autoref{prp:unitalGen}, $G^{n+1}_d$ is an open subset of $E^{n+1}_d$ and therefore is a manifold of dimension $(n+1)d^2$.
Further, $\PU_d$ is a compact Lie group of dimension $d^2-1$, acting freely on $G^{n+1}_d$.
Hence, as noted in the proof of \autoref{prp:dimEdm}, it follows from \cite[Theorem~IV.3.8]{Bre72CpctTransfGps} that $G^{n+1}_d/\PU_d$ is a manifold of dimension $(n+1)d^2-(d^2-1)=nd^2+1$.
By assumption, we have $\dim(X)<2n(d-1)$, and thus
\[
2\dim(X)
< 4n(d-1)
\leq nd^2+1.
\]
It follows that $E(X,G^{n+1}_d/\PU_d)$ is dense in $C(X,G^{n+1}_d/\PU_d)$.

Set $f:=\Psi(\vect{a})$.
Then $f\colon X\to G^{n+1}_d/\PU_d$ is a continuous map, which can be approximated arbitrarily closely by embeddings.
To complete the proof, we need to show that one of these embeddings is realized as $\Psi(\vect{b})$ for some $\vect{b}\in A^{n+1}_\sa$ close to~$\vect{a}$.
We will do this by successively appying our version of the homotopy extension lifting property proved in \autoref{prp:HELP}.

Every manifold is finite-dimensional and locally contractible and therefore an absolute neighborhood retract (ANR); see \cite[Theorem~4.2.33]{vMi01TopFunctionSp}.
Given a homotopy $H\colon X\times[0,1]\to M$ and $t\in[0,1]$, we let $H_t\colon X\to M$ be given by $H_t(x):=H(x,t)$.

\emph{Step~1:
We find a homotopy $F\colon X\times[0,1]\to G^{n+1}_d/\PU_d$ such that $F_0=f$ and such that $F_{1/k}$ belongs to $E(X,G^{n+1}_d/\PU_d)$ for every $k\geq 1$.
}

Set $M:=G^{n+1}_d/\PU_d$.
We use that $M$ is an ANR.
Given $\delta>0$, one says that $H\colon X\times[0,1]\to M$ is a \emph{$\delta$-homotopy} if $d(H_0,H_t)<\delta$ for all $t\in[0,1]$.
By \cite[Theorem~4.1.1]{vMi01TopFunctionSp}, for every $\delta>0$ there exists $\gamma>0$ such that for every $g\in C(X,M)$ satisfying $d(f,g)<\gamma$ there exists a $\delta$-homotopy $H\colon X\times[0,1]\to M$ with $H_0=f$ and $H_1=g$.
Given $n\in\NN$, we apply this for $\delta_n=\tfrac{1}{2^n}$, to obtain $\gamma_n>0$.
Using that $E(X,M)\subseteq C(X,M)$ is dense, choose $g_n\in E(X,M)$ satisfying $d(f,g_n)<\gamma_n$.
By choice of $\gamma_n$, we obtain a $\tfrac{1}{2^n}$-homotopy $H^{(n)}\colon X\times[0,1]\to M$ satisfying $H^{(n)}_0=f$ and $H^{(n)}_1=g_n$.

Next, we define $H\colon X\times[0,\infty)\to M$ by 
\[
H(x,t) = \begin{cases}
H^{(k)}(x,2k+1-t), &\text{ if } t\in[2k,2k+1] \\
H^{(k+1)}(x,t-2k-1), &\text{ if } t\in[2k+1,2k+2].
\end{cases}
\]

Thus, $H$ is the concatenation of the reverse of $H^{(1)}$, followed by $H^{(2)}$ and its reverse, and so on, as shown in the following picture:
\[
\makebox{
\begin{tikzpicture}
\draw [help lines, ->] (0,0) -- (5.2,0);
\draw [thick] (0,-0.1) -- (0,0.1);
\draw [thick] (1,-0.1) -- (1,0.1);
\draw [thick] (2,-0.1) -- (2,0.1);
\draw [thick] (3,-0.1) -- (3,0.1);
\draw [thick] (4,-0.1) -- (4,0.1);
\draw [thick] (5,-0.1) -- (5,0.1);
\node [below] at (0,0) {$0$};
\node [below] at (1,0) {$1$};
\node [below] at (2,0) {$2$};
\node [below] at (3,0) {$3$};
\node [below] at (4,0) {$4$};
\node [below] at (5,0) {$5$};
\node [above] at (0.5,0) {$H^{(0)}_{1-t}$};
\node [above] at (1.5,0) {$H^{(1)}_{t-1}$};
\node [above] at (2.5,0) {$H^{(1)}_{3-t}$};
\node [above] at (3.5,0) {$H^{(2)}_{t-3}$};
\node [above] at (4.5,0) {$H^{(2)}_{5-t}$};
\end{tikzpicture}
}
\]

Note that $H(\freeVar,2k)=g_k$ for each $k\in\NN$, and $\lim_{t\to\infty}H(x,t)=f(x)$ for every $x\in X$.
Let $\varrho\colon(0,1]\to[0,\infty)$ be a strictly decreasing, continuous map satisfying $\varrho(\tfrac{1}{k})=2k-2$ for $k\geq 1$.
Then $F\colon X\times[0,1]$ defined by $F(x,0)=f(x)$ and $F(x,t)=H(x,\varrho(t))$ for $t\in(0,1]$ has the desired properties.

\emph{Step~2:}
Since $X$ is compact, and since the $M_d$-bundle associated to $A$ is locally trivial, we can choose closed subsets $X_1,\ldots,X_m\subseteq X$ that cover $X$ and such that $A(X_j)\cong C(X_j,M_d)$ for each $j$.
Let $\pi_j\colon A\to C(X_j,M_d)$ be the corresponding quotient map.
Abusing notation, we also use $\pi_j$ to denote the naturally induced map
\[
\pi_j\colon A^{n+1}_\sa\to C(X_j,M_d)^{n+1}_\sa \cong C(X_j,E^{n+1}_d).
\]
Given $j,k\in\{1,\ldots,m\}$, both $\pi_j$ and $\pi_k$ induce an isomorphism between $A(X_j\cap X_k)$ and $C(X_j\cap X_k,M_d)$.
Let $c_{k,j}\colon X_j\cap X_k\to\PU_d=\Aut(M_d)$ be the continuous map such that $c_{k,j}(x).\pi_j(e)(x)=\pi_k(e)(x)$ for every $e\in A$ and $x\in X_j\cap X_k$.
Then 
\begin{align}
c_{k,j}(x).\pi_j(\vect{e})(x)=\pi_k(\vect{e})(x)
\end{align}
for every $\vect{e}\in A^{n+1}_\sa$ and $x\in X_j\cap X_k$.

\emph{Step~3:}
We will successively choose $t_1\geq t_2\geq\ldots\geq t_m>0$ and continuous maps
\[
H^{(k)}\colon X_k\times [0,t_k] \to G^{n+1}_d
\]
such that 
\begin{align}
\label{eq:liftHtpy1}
H^{(k)}(\freeVar,0)=\pi_k(\vect{a}), \quad\text{ and }\quad
q\circ H^{(k)} = F|_{X_k\times[0,t_k]},
\end{align}
and such that for every $j\leq k$ and $(x,s)\in (X_j\cap X_k)\times[0,t_k]$ we have
\begin{align}
\label{eq:liftHtpy2}
c_{k,j}(x).H^{(j)}(x,s)=H^{(k)}(x,s).
\end{align}

We start by setting $t_1:=1$.
The map $\pi_1(\vect{a})\colon X_1\to G^{n+1}_d$ satisfies $q\circ\pi_1(\vect{a})=f|_{X_1}$.
Thus, $\pi_1(\vect{a})$ is a lift of $F|_{X_1\times\{0\}}$.
Using the homotopy lifting property for fiber bundles, we obtain $H^{(1)}\colon X_1\times[0,1]\to G^{n+1}_d$ such that
\[
H^{(1)}_0 = \pi_1(\vect{a}), \quad\text{ and }\quad q\circ H^{(1)} = F|_{X_1\times[0,1]}.
\]

Next, assume that we have chosen $t_1\geq\ldots\geq t_{k-1}$ and $H^{(j)}$ for $j=1,\ldots,k-1$.
Set $Y_k:= X_k\cap (X_1\cup\ldots\cup X_{k-1})$, which is a closed subset of $X_k$.
We define $\widetilde{F}^{(k)}\colon (Y_{k}\times[0,t_{k-1}])\cup(X_{k}\times\{0\})\to G^{n+1}_d$ by
\[
\widetilde{F}^{(k)}(x,t) := \begin{cases}
c_{k,j}(x).H^{(j)}(x,t), & \text{ if } x\in X_{k}\cap X_j, \text{for } j\leq k-1; \\
\pi_{k}(\vect{a})(x), & \text{ if } t=0.
\end{cases}
\]
It follows from \eqref{eq:liftHtpy2} that $\widetilde{F}^{(k)}$ is well-defined.
Further, using \eqref{eq:liftHtpy1}, we obtain that $q\circ\widetilde{F}^{(k)}$ and $F$ agree on $(Y_{k}\times[0,t_{k-1}])\cup(X_k\times\{0\})$.
Applying \autoref{prp:HELP}, we obtain $t_k\in(0,t_{k-1}]$ and $H^{(k)}$ making the following diagram commute: 
\[
\xymatrix@R-10pt@C+20pt{
(Y_k\times[0,t_{k-1}])\cup (X_k\times\{0\})  \ar[r]^-{\widetilde{F}^{(k)}} \ar@{^{(}->}[d] & G^{n+1}_d \ar[dd]^q \\
(Y_k\times[0,t_{k-1}])\cup (X_k\times [0,t_k])  \ar@{-->}[ur]_>>>>>>>>>>{H^{(k)}} \ar@{^{(}->}[d]  \\
X_k\times[0,t_{k-1}] \ar[r]^{F} 
& G^{n+1}_d/\PU_d.
}
\]
One checks that $H^{(k)}$ has the desired properties.

\emph{Step~4:}
Let $t\in[0,t_m]$.
For each $j\in\{1,\ldots,m\}$, the map $H^{(j)}_t\colon X_j\to G^{n+1}_d$ defines an element in $\vect{b}_t^{(j)}\in C(X_j,M_d)^{n+1}_\sa$.
Given $j\leq k$ in $\{1,\ldots,m\}$ and $x\in X_j\cap X_k$, it follows from \eqref{eq:liftHtpy2} that
\[
c_{k,j}(x).\vect{b}_t^{(j)}(x) = \vect{b}_t^{(k)}(x).
\]
Thus, $\vect{b}_t^{(1)},\ldots,\vect{b}_t^{(m)}$ can be patched to give $\vect{b}_t\in A^{n+1}_\sa$ such that $\vect{b}_t^{(j)}=\pi_j(\vect{b}_t)$ for each $j$.
One checks that each $\vect{b}_t^{(j)}$ depends continuously on $t$, which implies that the map $[0,t_m]\to A^{n+1}_\sa$, $t\mapsto \vect{b}_t$, is continuous.
By construction, we have $\vect{b}_0=\vect{a}$, and $\Psi(\vect{b}_t)=F_t$ for each $t\in[0,t_m]$.
Using that $\vect{a}$ belongs to $\Gen^{\mathrm{fiber}}_{n+1}(A)_\sa$, which is an open subset of $A^{n+1}_\sa$, we can choose $k\geq 1$ such that
\[
\vect{a}=_\varepsilon\vect{b}_{1/k} \in \Gen^{\mathrm{fiber}}_{n+1}(A)_\sa.
\]

We have $\Psi(\vect{b}_{1/k})=F_{1/k}$, which by construction of~$F$ (Step~1) belongs to $E(X,G^{n+1}_d/\PU_d)$.
It follows that $\vect{b}_{1/k}\in\Gen_{n+1}(A)_\sa$.
\end{proof}

\begin{prp}
\label{prp:gr_hom_unital}
Let $A$ be a unital $d$-homogeneous \ca{}, $d\geq 2$.
Then:
\begin{align*}
\gr(A)
&= \left\lceil\frac{\dim(\Prim(A))+1}{2d-2}\right\rceil.
\end{align*}
\end{prp}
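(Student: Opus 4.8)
The plan is to derive this nonseparable statement from the separable case \autoref{prp:gr_hom_sep} via the reduction machinery of \autoref{sec:reduction}, exploiting that both $\gr$ and the topological dimension are compatible with approximation by separable subalgebras. Throughout, set $X:=\Prim(A)$ and $m:=\dim(X)$; since $A$ is unital, $X$ is compact and $m=\locdim(X)$. Let $n$ denote the claimed value $\left\lceil\tfrac{m+1}{2d-2}\right\rceil$. First I would fix a good supply of separable subalgebras: applying \autoref{prp:LShom} with $l=m$ and intersecting the resulting family with the (evidently $\sigma$-complete and cofinal) family $\{B\in\SubSep(A):1_A\in B\}$, and using that finite intersections of $\sigma$-complete, cofinal families are again $\sigma$-complete and cofinal (see \autoref{pgr:LS}), I obtain a $\sigma$-complete, cofinal family $\mathcal{S}$ of unital, separable, $d$-homogeneous subalgebras $B\subseteq A$ with $\locdim(\Prim(B))\leq m$. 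For each $B\in\mathcal{S}$, \autoref{prp:gr_hom_sep} applies and gives $\gr(B)=\left\lceil\tfrac{\dim(\Prim(B))+1}{2d-2}\right\rceil$, where $\dim(\Prim(B))=\locdim(\Prim(B))$ because $B$ is separable, hence $\Prim(B)$ is $\sigma$-compact.

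For the upper bound $\gr(A)\leq n$ (nontrivial only when $m<\infty$), I observe that every $B\in\mathcal{S}$ satisfies $\gr(B)\leq\left\lceil\tfrac{m+1}{2d-2}\right\rceil=n$. As $\mathcal{S}$ is cofinal, $A$ is approximated by the subalgebras in $\mathcal{S}$, so \autoref{prp:gr_approx} yields $\gr(A)\leq n$.

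For the lower bound I would run the same approximation in reverse, transporting information from $\gr$ back to the topological dimension. Assume $\gr(A)=N<\infty$. Since $\gr$ satisfies axiom~(D6), the family $\{B\in\SubSep(A):\gr(B)\leq N\}$ is $\sigma$-complete and cofinal; intersecting it with $\mathcal{S}$ produces a $\sigma$-complete, cofinal family of unital, separable, $d$-homogeneous $B$ with $\gr(B)\leq N$. For each such $B$, \autoref{prp:gr_hom_sep} forces $\dim(\Prim(B))\leq(2d-2)N-1$, that is, $\topdim(B)\leq(2d-2)N-1$. This family is cofinal, so $A$ is approximated by such $B$; the crucial point is that the topological dimension also obeys the approximation axiom~(D5): by \cite[Lemma~3]{Thi13TopDimTypeI} a homogeneous \ca{} approximated by subalgebras of topological dimension at most $\ell$ itself has topological dimension at most $\ell$. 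Hence $m=\topdim(A)\leq(2d-2)N-1$, which rearranges to $n=\left\lceil\tfrac{m+1}{2d-2}\right\rceil\leq N=\gr(A)$. The same computation shows that $\gr(A)<\infty$ already forces $m<\infty$, so when $m=\infty$ we get $\gr(A)=\infty=n$; combining with the upper bound gives the asserted equality in all cases.

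The step I expect to be the main obstacle is the lower bound, and in particular resisting the natural but doomed attempt to prove it by exhibiting a single separable subalgebra or quotient of $A$ whose primitive spectrum already has dimension $m$. This cannot be done in general: the topological dimension is not monotone under passing to $d$-homogeneous subalgebras (continuous surjections can raise dimension), and a nonmetrizable compact $X$ need not contain any compact metrizable subspace of full covering dimension, so no single separable piece need witness $\gr(A)=n$. The decisive idea is therefore not to realize the dimension inside one subalgebra, but to apply the approximation axiom~(D5) for the topological dimension in the contrapositive, which is exactly what makes the $\sigma$-complete, cofinal formalism indispensable here.
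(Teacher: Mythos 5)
Your proposal is correct and follows essentially the same route as the paper: intersect the $\sigma$-complete, cofinal family from \autoref{prp:LShom} with the one controlling $\gr$, apply the separable case \autoref{prp:gr_hom_sep} to members of the intersection, get the upper bound from \autoref{prp:gr_approx}, and get the lower bound by feeding the resulting dimension bound on the separable subalgebras back through the approximation property of the topological dimension (\cite[Lemma~3]{Thi13TopDimTypeI}, as in the proof of \autoref{prp:LShomUntwisted}). The only differences are notational (you fix the target value $n$ and call $\gr(A)=N$, whereas the paper sets $n:=\gr(A)$), and your closing remark correctly identifies the same key point the paper relies on.
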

\begin{proof}
Set $n:=\gr(A)$ and $l:=\dim(\Prim(A))$, and then set
\begin{align*}
\mathcal{S}_1 &:= \big\{ B\in\SubSep(A) : 1\in B, \gr(B)\leq n \big\}, \quad\text{ and }\quad \\
\mathcal{S}_2 &:= \big\{ B\in\SubSep(A) : B \text{ $d$-homogeneous}, \locdim(\Prim(B))\leq l \big\}.
\end{align*}
As noted in \autoref{pgr:LS}, since $\gr$ satisfies~(D5) and~(D6), it follows that $\mathcal{S}_1$ is $\sigma$-complete and cofinal.
By \autoref{prp:LShom}, $\mathcal{S}_2$ is $\sigma$-complete and cofinal.
Hence, $\mathcal{S}_1\cap\mathcal{S}_2$ is $\sigma$-complete and cofinal as well.

Let $B\in \mathcal{S}_1\cap\mathcal{S}_2$.
Then $B$ is a unital, separable, $d$-homogeneous \ca{}.
Hence, $\dim(\Prim(B))=\locdim(\Prim(B))$, and by \autoref{prp:gr_hom_sep} we have
\[
\gr(B)
= \left\lceil\frac{\dim(\Prim(B))+1}{2d-2}\right\rceil.
\]

Thus, each $B\in\mathcal{S}_1\cap\mathcal{S}_2$ satisfies $\gr(B)\leq \left\lceil\tfrac{l+1}{2d-2}\right\rceil$.
Since $A$ is approximated by the family $\mathcal{S}_1\cap\mathcal{S}_2$, we obtain $\gr(A)\leq \left\lceil\tfrac{l+1}{2d-2}\right\rceil$ by \autoref{prp:gr_approx}.

To show the converse inequality, set
\[
m := \max \left\{ m_0\in\NN : n \geq \left\lceil\frac{m_0+1}{2d-2}\right\rceil \right\}.
\]
Then each $B\in\mathcal{S}_1\cap\mathcal{S}_2$ satisfies $\topdim(B)=\dim(\Prim(B))\leq m$.
Arguing with the topological dimension as in the proof of \autoref{prp:LShomUntwisted}, we deduce that $\dim(\Prim(A))=\topdim(A)\leq m$, and thus $n\geq \left\lceil\frac{\dim(\Prim(A))+1}{2d-2}\right\rceil$, as desired.
\end{proof}

\begin{thm}
\label{prp:gr_hom}
Let $A$ be a $d$-homogeneous \ca{}.
Set $X:=\Prim(A)$.
If $d=1$, then $\grPre(A)=\gr(A) = \locdim(X\times X)$.
If $d\geq 2$, then:
\begin{align*}
\grPre(A)=\gr(A)
&= \left\lceil\frac{\locdim(X)+1}{2d-2}\right\rceil.
\end{align*}
\end{thm}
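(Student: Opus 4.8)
The plan is to reduce the general statement to the unital cases already established in \autoref{prp:gr_hom_sep} and \autoref{prp:gr_hom_unital}, using the permanence and approximation properties from \autoref{sec:gr}. The case $d=1$ is immediate: a $1$-homogeneous \ca{} has only one-dimensional irreducible representations, hence is commutative, so $A\cong C_0(X)$ and \autoref{prp:gr_abln} gives $\grPre(A)=\gr(A)=\locdim(X\times X)$. For $d\geq 2$ I write $N:=\left\lceil\frac{\locdim(X)+1}{2d-2}\right\rceil$ and, since \autoref{prp:grPre_gr} already yields $\grPre(A)\leq\gr(A)$, it suffices to prove the two inequalities
\[
\gr(A)\leq N \qquad\text{ and }\qquad \grPre(A)\geq N.
\]
These force $N\leq\grPre(A)\leq\gr(A)\leq N$, so all three quantities equal $N$. (If $\locdim(X)=\infty$ then $N=\infty$, the first inequality is vacuous, and the second follows from the lower-bound argument below applied to all $n$.)

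For the upper bound I would first reduce to the separable case: by \autoref{prp:LShom} the algebra $A$ is approximated by separable $d$-homogeneous sub-\ca{s} $B\subseteq A$ with $\locdim(\Prim(B))\leq\locdim(X)$, so by \autoref{prp:gr_approx} it is enough to bound $\gr(B)$ for each such $B$. Fix one and set $Y:=\Prim(B)$; since $B$ is separable, $Y$ is $\sigma$-compact with $\dim(Y)=\locdim(Y)\leq\locdim(X)$. Choose open sets $V_1\subseteq V_2\subseteq\ldots$ with compact closures $\overline{V_n}\subseteq V_{n+1}$ exhausting $Y$, so that $B=\varinjlim_n B(V_n)$. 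Each $B(V_n)$ is an ideal in the quotient $B(\overline{V_n})$, which is a \emph{unital} $d$-homogeneous \ca{} with $\dim(\Prim(B(\overline{V_n})))=\dim(\overline{V_n})\leq\locdim(X)$; hence \autoref{prp:gr_hom_unital} gives $\gr(B(\overline{V_n}))\leq N$, and \autoref{prp:gr_idealQuotExt} gives $\gr(B(V_n))\leq\gr(B(\overline{V_n}))\leq N$. Applying \autoref{prp:gr_approx} to the inductive limit yields $\gr(B)\leq\liminf_n\gr(B(V_n))\leq N$, and a final application of \autoref{prp:gr_approx} to the approximation of $A$ gives $\gr(A)\leq N$.

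For the lower bound I would argue by contraposition. Suppose $\grPre(A)=n<N$; then $\locdim(X)\geq 2n(d-1)$, so there is a compact subset $K\subseteq X$ with $\dim(K)\geq 2n(d-1)$, whence $\left\lceil\frac{\dim(K)+1}{2d-2}\right\rceil\geq n+1$. The quotient $A(K)$ is a unital $d$-homogeneous \ca{} with $\Prim(A(K))=K$, and \autoref{prp:gr_idealQuotExt} gives $\grPre(A(K))\leq\grPre(A)=n$. To reach a contradiction I need the matching lower bound $\grPre(A(K))\geq\left\lceil\frac{\dim(K)+1}{2d-2}\right\rceil$ for unital homogeneous algebras. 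This I obtain by re-running the Löwenheim–Skolem argument of \autoref{prp:gr_hom_unital} with $\grPre$ in place of $\gr$: the collection of unital separable $d$-homogeneous sub-\ca{s} $B\subseteq A(K)$ with $\grPre(B)\leq n$ is $\sigma$-complete and cofinal (using that $\grPre$ satisfies (D5) and (D6) together with \autoref{prp:LShom}), and for each such $B$ the separable computation carried out in the proof of \autoref{prp:gr_hom_sep} shows that $\grPre(B)\leq n$ forces $\dim(\Prim(B))<2n(d-1)$. Since $\topdim$ does not increase under approximation, this propagates to $\dim(K)=\topdim(A(K))<2n(d-1)$, contradicting the choice of $K$. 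Hence $\grPre(A(K))\geq n+1$, so $\grPre(A)\geq\grPre(A(K))\geq n+1$, contradicting $\grPre(A)=n$. This proves $\grPre(A)\geq N$ and completes the argument.

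The main obstacle is the passage from the unital, separable computation to the general statement. In the upper bound the crux is the nonunital reduction: rather than attempting to extend the (possibly nontrivial) $M_d$-bundle over the noncompact space $Y$ to a compactification, which need not be possible, one exhausts $Y$ by relatively compact opens and realizes each piece $B(V_n)$ as an ideal inside the unital quotient $B(\overline{V_n})$, so that the known unital value transfers through the ideal- and inductive-limit-permanence of $\gr$. The delicate bookkeeping is keeping $\grPre$ and $\gr$ apart throughout: the upper bound is proved for $\gr$, whereas the lower bound must be established for the a priori smaller invariant $\grPre$, and it is precisely this that forces one to re-derive the dimension estimate of \autoref{prp:gr_hom_sep} at the level of $\grPre$ and to invoke the stability of $\topdim$ under approximation.
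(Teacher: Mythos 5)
Your argument is correct, and while the $d=1$ case and the lower bound essentially reproduce the paper's proof, your upper bound takes a genuinely different route. For the lower bound, the paper likewise passes to the unital quotients $A(K)$ for compact $K\subseteq X$ and combines \autoref{prp:gr_hom_unital} with \autoref{prp:gr_idealQuotExt}; your contrapositive phrasing and your re-derivation of the unital lower bound at the level of $\grPre$ are harmless but redundant, since $\gr$ coincides with $\grPre$ on unital algebras (the minimal unitization of a unital algebra is itself), so \autoref{prp:gr_hom_unital} already yields $\grPre(A(K))=\gr(A(K))=\left\lceil\tfrac{\dim(K)+1}{2d-2}\right\rceil$ directly. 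The divergence is in the upper bound: after the identical reduction via \autoref{prp:LShom} and \autoref{prp:gr_approx} to a separable $d$-homogeneous $B$ with $\sigma$-compact spectrum $Y$, the paper extends the $M_d$-bundle over $Y$ to a locally trivial bundle over $\beta Y$ (using \cite[Lemma~2.5, Proposition~2.9]{Phi07RSH} and $\dim(\beta Y)=\dim(Y)$), thereby realizing $B$ as an ideal in a single unital homogeneous algebra $D$ with $\Prim(D)\cong\beta Y$; you instead exhaust $Y$ by relatively compact open sets $V_n$, identify the ideal of $B$ supported on $V_n$ with an ideal of the unital quotient $B(\overline{V_n})$, and pass to the inductive limit. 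Your route avoids the finite-type and bundle-extension machinery and the Stone--\v{C}ech compactification entirely, at the cost of one further application of \autoref{prp:gr_approx}; both arguments are valid, and yours is arguably the more elementary of the two.
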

\begin{proof}
For $d=1$, this follows from \autoref{prp:gr_abln}.
So assume that $d\geq 2$.
By \autoref{prp:grPre_gr}, we have $\grPre(A) \leq \gr(A)$.
Let $K\subseteq X$ be a compact subset.
The corresponding quotient $A(K)$ is a unital, $d$-homogeneous \ca{} with $\Prim(A(K))\cong K$.
Using \autoref{prp:gr_hom_unital} at the first step, and using \autoref{prp:gr_idealQuotExt} at the last step, we get
\[
\left\lceil\frac{\dim(K)+1}{2d-2}\right\rceil
= \gr(A(K))
= \grPre(A(K))
\leq \grPre(A).
\]
Since this holds for every compact subset of $X$, we deduce that
\[
\left\lceil\frac{\locdim(X)+1}{2d-2}\right\rceil
\leq \grPre(A) \leq \gr(A).
\]

To verify that $\gr(A)\leq\left\lceil\tfrac{\locdim(X)+1}{2d-2}\right\rceil$, set $l:=\locdim(X)$, which we may assume to be finite.
By \autoref{prp:LShom}, the collection
\[
\mathcal{S} := \big\{ B\in\SubSep(A) : B \text{ $d$-homogeneous}, \locdim(\Prim(B))\leq l \big\}
\]
is $\sigma$-complete and cofinal.
Let $B\in\mathcal{S}$.
We view $B$ as a locally trivial $M_d$-bundle over $Y:=\Prim(B)$.
Since $B$ is separable, $Y$ is $\sigma$-compact and thus $\dim(Y)=\locdim(Y)\leq l<\infty$.
By \cite[Lemma~2.5]{Phi07RSH}, the $M_d$-bundle has finite type.
Applying \cite[Proposition~2.9]{Phi07RSH}, we obtain a locally trivial $M_d$-bundle over the Stone-\v{C}ech-compactification $\beta Y$ extending the bundle associated to $B$.
This means that there is a unital, $d$-homogeneous \ca{} $D$ with $\Prim(D)\cong\beta(Y)$ and such that $B$ is an ideal in $D$.
Since $Y$ is a normal space, we have $\dim(\beta Y)=\dim(Y)$ by \cite[Proposition~6.4.3, p.232]{Pea75DimThy}.
Using \autoref{prp:gr_idealQuotExt} at the first step, and \autoref{prp:gr_hom_unital} at the second step, we get
\[
\gr(B)
\leq\gr(D) 
= \left\lceil\frac{\dim(\beta Y)+1}{2d-2}\right\rceil
\leq \left\lceil\frac{l+1}{2d-2}\right\rceil.
\]
Since $A$ is approximated by $\mathcal{S}$, we obtain $\gr(A)\leq \left\lceil\tfrac{l+1}{2d-2}\right\rceil$ by \autoref{prp:gr_approx}.
\end{proof}

In \autoref{prp:gr_sum_subhom}, we will generalize the following result to compute the generator rank of direct sums of subhomogeneous \ca{s}.

\begin{lma}
\label{prp:gr_sum_d-hom}
Let $A$ and $B$ be $d$-homogeneous \ca{s}.
Then 
\[
\gr(A\oplus B)
= \max \big\{ \gr(A),\gr(B) \big\}.
\]
\end{lma}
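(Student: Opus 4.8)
The plan is to exploit that $A\oplus B$ is itself $d$-homogeneous, so that its generator rank is computed by \autoref{prp:gr_hom}, and then to read off the answer from the structure of its primitive ideal space. Write $X:=\Prim(A)$ and $Y:=\Prim(B)$. Since $A$ and $B$ are simultaneously ideals and quotients of $A\oplus B$, their primitive ideal spaces sit inside $\Prim(A\oplus B)$ as complementary clopen sets, so $\Prim(A\oplus B)=X\sqcup Y$ as a topological disjoint union, and every irreducible representation of $A\oplus B$ is again $d$-dimensional. The inequality $\max\{\gr(A),\gr(B)\}\leq\gr(A\oplus B)$ is immediate from \autoref{prp:gr_idealQuotExt}, since $A$ and $B$ are quotients of $A\oplus B$. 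It therefore remains to prove the reverse inequality, which I would obtain by computing $\gr(A\oplus B)$ explicitly and comparing.

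For $d\geq 2$ this is routine. A compact subset $K\subseteq X\sqcup Y$ splits as the clopen union $(K\cap X)\sqcup(K\cap Y)$, whence $\dim(K)=\max\{\dim(K\cap X),\dim(K\cap Y)\}$ and therefore $\locdim(X\sqcup Y)=\max\{\locdim(X),\locdim(Y)\}$. Since the function $t\mapsto\big\lceil\tfrac{t+1}{2d-2}\big\rceil$ is nondecreasing, it commutes with passing to the maximum, and \autoref{prp:gr_hom} gives
\[
\gr(A\oplus B)
=\left\lceil\frac{\max\{\locdim(X),\locdim(Y)\}+1}{2d-2}\right\rceil
=\max\left\{\left\lceil\frac{\locdim(X)+1}{2d-2}\right\rceil,\left\lceil\frac{\locdim(Y)+1}{2d-2}\right\rceil\right\}
=\max\{\gr(A),\gr(B)\}.
\]

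For $d=1$ the algebras are commutative, $A\oplus B\cong C_0(X\sqcup Y)$, and \autoref{prp:gr_abln} computes $\gr(A\oplus B)=\locdim(Z\times Z)$ with $Z:=X\sqcup Y$. Decomposing $Z\times Z$ into its four clopen blocks and using $X\times Y\cong Y\times X$ gives
\[
\locdim(Z\times Z)=\max\big\{\locdim(X\times X),\,\locdim(X\times Y),\,\locdim(Y\times Y)\big\}.
\]
As $\gr(A)=\locdim(X\times X)$ and $\gr(B)=\locdim(Y\times Y)$, the desired equality is equivalent to the dimension estimate $\locdim(X\times Y)\leq\max\{\locdim(X\times X),\locdim(Y\times Y)\}$. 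Reducing to compact sets via the coordinate projections (a compact $C\subseteq X\times Y$ lies in $\pi_X(C)\times\pi_Y(C)$ with both factors compact), this in turn follows once I establish $\dim(K\times L)\leq\max\{\dim(K\times K),\dim(L\times L)\}$ for compacta $K\subseteq X$ and $L\subseteq Y$.

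I expect this last compact inequality to be the only genuine obstacle. The naive subadditivity $\dim(K\times L)\leq\dim(K)+\dim(L)$ is too weak here, since the diagonal products $K\times K$ and $L\times L$ may themselves drop below twice the dimension of their factors; so the bound I need is a real product theorem. The approach I would take is to pass to cohomological dimension and use that for a field $\mathbb{F}$ one has exact additivity $\dim_{\mathbb{F}}(K\times L)=\dim_{\mathbb{F}}(K)+\dim_{\mathbb{F}}(L)$, which yields fieldwise
\[
\dim_{\mathbb{F}}(K\times L)=\dim_{\mathbb{F}}(K)+\dim_{\mathbb{F}}(L)\leq 2\max\{\dim_{\mathbb{F}}(K),\dim_{\mathbb{F}}(L)\}=\max\{\dim_{\mathbb{F}}(K\times K),\dim_{\mathbb{F}}(L\times L)\};
\]
the remaining task is to promote this fieldwise estimate to covering dimension by controlling the coefficient group that realizes $\dim(K\times L)$, and carrying out this Bockstein bookkeeping is where the argument requires care. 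An alternative I would consider is to invoke the corresponding product theorem from the dimension theory of compacta directly, rather than rederiving it.
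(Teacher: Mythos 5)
Your argument for $d\geq 2$ is exactly the paper's: identify $\Prim(A\oplus B)$ with $X\sqcup Y$, note $\locdim(X\sqcup Y)=\max\{\locdim(X),\locdim(Y)\}$, and push the maximum through the nondecreasing function $t\mapsto\lceil\tfrac{t+1}{2d-2}\rceil$ using \autoref{prp:gr_hom}. That part is complete and correct.

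The gap is in the case $d=1$. The paper disposes of it in one line by citing \cite[Proposition~5.8]{Thi12arX:GenRnk}, which is precisely the statement $\gr(C_0(X)\oplus C_0(Y))=\max\{\gr(C_0(X)),\gr(C_0(Y))\}$. You instead try to rederive it from \autoref{prp:gr_abln}, and your reductions (clopen block decomposition of $(X\sqcup Y)^2$, passage to compacta via the coordinate projections) correctly isolate the one nontrivial ingredient, namely the inequality $\dim(K\times L)\leq\max\{\dim(K\times K),\dim(L\times L)\}$ for compacta. But you do not prove it, and the sketch you offer does not close on its own: covering dimension of a compactum is $\dim_{\mathbb{Z}}$ (in the finite-dimensional case), which is \emph{not} the supremum of $\dim_{\mathbb{F}}$ over fields $\mathbb{F}$ -- the Bockstein basis contains non-field groups such as $\mathbb{Z}_{(p)}$ and $\mathbb{Z}/p^\infty$, and the product formulas for those coefficients are genuinely more delicate than additivity over a field. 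So the "Bockstein bookkeeping" you defer is the actual mathematical content of the $d=1$ case, not a routine verification. The honest fix is the one you mention as an alternative: quote the known result, i.e.\ \cite[Proposition~5.8]{Thi12arX:GenRnk} (or the dimension-theoretic product theorem underlying it), rather than attempting to reprove it.
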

\begin{proof}
For $d=1$, this follows from \cite[Proposition~5.9]{Thi21GenRnk}.
So assume that $d\geq 2$.
Set $X:=\Prim(A)$, and $Y:=\Prim(B)$.
Then $A\oplus B$ is $d$-homogeneous with $\Prim(A\oplus B)\cong X\sqcup Y$, the disjoint union of $X$ and $Y$. 
Applying \autoref{prp:gr_hom} at the first and last step, we obtain
\begin{align*}
\gr(A\oplus B)
&= \left\lceil\frac{\locdim(X\sqcup Y)+1}{2d-2}\right\rceil 
= \left\lceil\frac{\max\{\locdim(X),\locdim(Y)\}+1}{2d-2}\right\rceil \\
&= \max\left\{ \left\lceil\frac{\locdim(X)+1}{2d-2}\right\rceil, \left\lceil\frac{\locdim(Y)+1}{2d-2}\right\rceil \right\} \\
&=\max\big\{ \gr(A),\gr(B) \big\}. \qedhere
\end{align*}
\end{proof}

\begin{rmk}
Let $A$ be a unital $d$-homogeneous \ca{}. 
Set $X:=\Prim(A)$.
If $d=1$, then $A\cong C(X)$, and by \autoref{prp:gr_abln} the generator rank of $A$ is $\dim(X\times X)$.
The value of $\dim(X\times X)$ is either $2\dim(X)$ or $2\dim(X)-1$, and accordingly we say that $X$ is of \emph{basic type} or of \emph{exceptional type};
see \cite[Proposition~5.3]{Thi21GenRnk}.

If $d\geq 2$, then by \autoref{prp:gr_hom_unital}, the generator rank of $A$ only depends on $\dim(X)$ (and $d$), but not on $\dim(X\times X)$.
Thus, in this case, the generator rank of~$A$ does not depend on whether $X$ is of basic or exceptional type.
\end{rmk}

\begin{rmk}
Let $m\geq 1$ and $d\geq 2$, and set $A=C([0,1]^m,M_d)$.
Let $\mathrm{gen}(A)$ denote the minimal number of self-adjoint generators for $A$.
By \cite[Theorem~4]{Nag04SingleGenRnkCa}, \cite[Corollary~3.2]{BegEva91RRMatrixValued} (see also \cite[Theorem~V.3.2.6]{Bla06OpAlgs}) and \autoref{prp:gr_hom_unital}, we have:
\[
\mathrm{gen}(A)=\left\lceil\frac{m-1}{d^2}+1\right\rceil,
\quad\quad
\rr(A)=\left\lceil\frac{m}{2d-1}\right\rceil,
\quad\quad
\gr(A)=\left\lceil\frac{m+1}{2d-2}\right\rceil.
\]
\end{rmk}

\section{Subhomogeneous \texorpdfstring{$C^*$-algebras}{C*-algebras}}
\label{sec:subhom}

In this section, we compute the generator rank of subhomogeneous \ca{s};
see \autoref{prp:gr_subhom}.
Recall that a \ca{} is \emph{$d$-subhomogeneous} (for some $d\geq 1$) if all of its irreducible representations have dimension at most $d$;
and it is \emph{subhomogeneous} if it is $d$-subhomogeneous for some $d$;
see \cite[Definition~IV.1.4.1, p.330]{Bla06OpAlgs}.
It is known that a \ca{} is subhomogeneous if and only if it is a sub-\ca{} of a homogeneous \ca{};
equivalently it is a sub-\ca{} of $C(X,M_d)$ for some compact, Hausdorff space $X$ and some $d\geq 1$.

Inductive limits of subhomogeneous \ca{s} are called \emph{approximately subhomogeneous}, or ASH-algebras for short.
As an application we show that every nonzero, $\mathcal{Z}$-stable ASH-algebra has generator rank one;
see \autoref{prp:gr_ZstableASH}.

To compute the generator rank of a subhomogeneous \ca{}, we use that it is a successive extension by homogeneous \ca{s}.
Using the results from \autoref{sec:hom}, we compute the generator rank of the homogeneous parts.
The crucial extra ingredient is \autoref{prp:CST-gr}, which allows us to compute the generator rank of the extension by a homogeneous \ca.

Given a \ca{} $A$, we equip the primitive ideal space $\Prim(A)$ with the hull-kernel topology;
see \cite[Section~II.6.5, p.111ff]{Bla06OpAlgs} for details.
Given an ideal $I\subseteq A$, the set $\hull(I):=\{J\in\Prim(A) : I\subseteq J\}$ is a closed subset of $\Prim(A)$, and this defines a natural bijection between ideals of $A$ and closed subsets of $\Prim(A)$.

\begin{lma}
\label{prp:charCoverPrim}
Let $A$ be a unital \ca{}, and let $(I_k)_{k\in\NN}$ be a decreasing sequence of ideals.
Then the following are equivalent:
\begin{enumerate}
\item
$\bigcup_k \hull(I_k)=\Prim(A)$;
\item
for each $\varphi\in A^*$, we have $\lim_{k\to\infty}\|\varphi|_{I_k}\| = 0$.
\end{enumerate} 
\end{lma}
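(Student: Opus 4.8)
The plan is to translate~(1) into a statement about the primitive ideal space and then to read off the functional condition~(2) from the GNS construction. For each $k$ put $V_k:=\Prim(A)\setminus\hull(I_k)$, the open set $\{P\in\Prim(A): I_k\not\subseteq P\}$. Since $(I_k)$ is decreasing, $(V_k)$ is a decreasing sequence of open subsets, and~(1) says exactly that $\bigcap_k V_k=\emptyset$. Note that a primitive ideal $P=\ker\pi$ lies in $V_k$ precisely when $\pi(I_k)\neq 0$, and for irreducible $\pi$ this forces $\pi|_{I_k}$ to be nondegenerate.

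For the direction (2)$\Rightarrow$(1) I would argue by contraposition. If $\bigcap_k V_k\neq\emptyset$, choose $P=\ker\pi$ in this intersection, so that $\pi(I_k)\neq 0$ for all $k$. As $\pi$ is irreducible and each $I_k$ is an ideal, $\pi(I_k)$ acts nondegenerately on $H_\pi$, so an approximate unit of $I_k$ is mapped by $\pi$ to a net converging strongly to $1$. Hence for a unit vector $\xi$ the vector state $\varphi\colon a\mapsto\langle\pi(a)\xi,\xi\rangle$ satisfies $\|\varphi|_{I_k}\|=1$ for every $k$, so $\lim_k\|\varphi|_{I_k}\|=1$ and~(2) fails.

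The content lies in (1)$\Rightarrow$(2), once more contrapositively. Assume $\|\varphi|_{I_k}\|\geq\delta>0$ for all $k$ and some $\varphi\in A^*$. Replacing $\varphi$ by the positive functional $|\varphi|$ from its polar decomposition (which does not decrease these restriction norms) and rescaling, I may take $\varphi$ to be a state $\psi$. In its GNS representation $(\pi_\psi,H_\psi,\xi_\psi)$ let $P_k$ be the orthogonal projection onto $\overline{\pi_\psi(I_k)H_\psi}$. Because $I_k$ is an ideal, this subspace is invariant under both $\pi_\psi(A)$ and its commutant, so each $P_k$ is a central projection of $\pi_\psi(A)''$; moreover $\|\psi|_{I_k}\|=\langle P_k\xi_\psi,\xi_\psi\rangle$, the $P_k$ decrease, and their infimum $P_\infty$ satisfies $\langle P_\infty\xi_\psi,\xi_\psi\rangle\geq\delta$, so $P_\infty\neq 0$.

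The decisive and hardest step is to manufacture from $P_\infty\neq 0$ a single primitive ideal lying in all the $V_k$. Writing the centre of $\pi_\psi(A)''$ as $L^\infty(\Omega,\mu)$ with $\mu$ the finite measure induced by the vector state of $\xi_\psi$, the projection $P_k$ becomes the indicator of a measurable set $\Omega_k=\{\omega:\pi_\omega(I_k)\neq 0\}$, so that $\|\psi|_{I_k}\|=\mu(\Omega_k)$. Continuity from above of $\mu$ then yields $\mu(\bigcap_k\Omega_k)=\lim_k\mu(\Omega_k)\geq\delta>0$; thus $\bigcap_k\Omega_k$ is nonempty, and any $\omega$ in it gives a factor representation $\pi_\omega$ with $\pi_\omega(I_k)\neq 0$ for all $k$. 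The point I expect to require the most care is that $\ker\pi_\omega$ be \emph{primitive} rather than merely prime, so that it is an honest element of $\bigcap_k V_k$ and contradicts~(1). This is automatic when $A$ is separable, where the central decomposition is into irreducible representations and prime ideals are primitive. For nonseparable $A$ I would reduce to this case: choose contractions $x_k\in I_k$ with $\psi(x_k)\geq\delta/2$, take a separable unital subalgebra $B\subseteq A$ containing all the $x_k$, apply the separable case to the decreasing ideals $B\cap I_k$ (for which~(2) fails, witnessed by the $x_k$) to obtain a pure state of $B$ not vanishing on any $B\cap I_k$, and extend it to a pure state of $A$; the associated irreducible representation is then nonzero on every $I_k$, so~(1) fails.
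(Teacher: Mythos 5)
Your proof is correct, and the easy direction (a primitive ideal missed by every $\hull(I_k)$ yields a pure state whose restriction to each $I_k$ has norm one) coincides with the paper's. For the hard direction (1)$\Rightarrow$(2), however, you take a genuinely different route. The paper works with the support projections $z_k$ of $I_k$ in $A^{**}$, notes that $\|\varphi|_{I_k}\|=\varphi^{**}(z_k)$ for positive $\varphi$, and views $\widehat{z_k}$ as a decreasing sequence of lower semicontinuous affine functions on the state space; since every pure state factors through some irreducible representation, these functions tend to zero on the pure states, and a Dini-type result of Alfsen for compact convex sets upgrades this to pointwise convergence on all states, hence on all functionals. You instead argue contrapositively via the GNS representation of a state with $\|\psi|_{I_k}\|\geq\delta$, the central projections $P_k$ onto $\overline{\pi_\psi(I_k)H_\psi}$, and the central (direct integral) decomposition, producing a factor representation nonzero on every $I_k$; you then need Dixmier's theorem that prime ideals of separable \ca{s} are primitive, plus a reduction of the nonseparable case to the separable one by pure state extension. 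Both arguments work; the paper's buys a shorter, representation-free proof at the cost of the Alfsen citation, while yours uses heavier disintegration machinery but makes the "witnessing" primitive ideal quite concrete. Two small inaccuracies in your write-up, neither fatal: the central decomposition yields factor (not irreducible) representations, so it is really the prime-implies-primitive step that closes the separable case; and the polar decomposition only gives $\|\varphi|_{I_k}\|^2\leq\||\varphi|\,|_{I_k}\|\cdot\|\varphi\|$ rather than that the restriction norms do not decrease, which still keeps them bounded away from zero and is all you need.
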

\begin{proof}
For each $k\in\NN$, let $z_k$ denote the support projection of $I_k$ in $A^{**}$, and let $\pi_k\colon A\to A/I_k$ denote the quotient map.

\emph{Claim: Let $\varphi\in A^*_+$ and $k\in\NN$. 
Then $\|\varphi|_{I_k}\|=\varphi^{**}(z_k)$.}
To prove the claim, let $(h_\alpha)_\alpha$ denote an increasing, positive, contractive approximate unit of $I_k$. 
Since $\varphi|_{I_k}$ is a positive functional on $I_k$, we have $\|\varphi|_{I_k}\| = \lim_\alpha \varphi(h_\alpha)$ by \cite[Proposition~II.6.2.5]{Bla06OpAlgs}.
Using also that $z_k$ is the weak*-limit of $(h_\alpha)_\alpha$ in $A^{**}$, we get
\[
\|\varphi|_{I_k}\| = \lim_\alpha \varphi(h_\alpha) = \varphi^{**}(z_k),
\]
which proves the claim.

Let $S(A)$ denote the set of states on $A$, which is a compact, convex subset of $A^*$, and let $P(A)$ denote the pure states on $A$, which agrees with the set of extreme points in $S(A)$.
Given $a\in(A^{**})_\sa$, we let $\widehat{a}\colon S(A)\to\mathbb{R}$ be given by
\[
\widehat{a}(\varphi) = \varphi^{**}(a),
\]
for $\varphi\in S(A)$.
Then $\widehat{a}$ is affine.
If $a\in A_\sa$, then $\widehat{a}$ is continuous.
Given $k\in\NN$, let $(h_\alpha)_\alpha$ be an increasing approximate unit of $I_k$.
Then $\widehat{z_k}$ is the pointwise supremum of the increasing net $(\widehat{h_\alpha})_\alpha$ of continuous functions, and therefore lower-semicontinuous.

To show that~(1) implies~(2), assume that $\bigcup_k \hull(I_k)=\Prim(A)$.
Let $\varphi\in P(A)$.
Since every pure state on $A$ factors through an irreducible representation, there exists $k$ such that $\varphi$ factors through $\pi_k$.
Let $\bar{\varphi}\in(A/I_k)^*$ such that $\varphi=\bar{\varphi}\circ\pi_k$.
We have $\pi_k^{**}(z_k)=0$, and therefore
\[
\widehat{z_k}(\varphi)
= \varphi^{**}(z_k)
= \bar{\varphi}^{**}(\pi_k^{**}(z_k))
= 0.
\]

Thus, $(\widehat{z_k})_k$ is a decreasing sequence of lower semicontinuous, affine functions with $\lim_{k\to\infty}\widehat{z_k}(\varphi)=0$ for each $\varphi\in P(A)$.
By \cite[Proposition~1.4.10, p.36]{Alf71CpctCvxSets}, we have $\lim_{k\to\infty}\widehat{z_k}(\varphi)=0$ for each $\varphi\in S(A)$.
Applying the claim, it follows that $\lim_{k\to\infty}\|\varphi|_{I_k}\|=0$ for every $\varphi\in S(A)$.
Now~(2) follows using that every functional in $A^*$ is a linear combination of four states, 
by \cite[Theorem~II.6.3.4, p.106]{Bla06OpAlgs}.

To show that~(2) implies~(1), assume that $\bigcup_k \hull(I_k)\neq\Prim(A)$.
We will show that~(2) does not hold.
Let $J\subseteq A$ be a primitive ideal with $J\notin \bigcup_k \hull(I_k)$, and let $\bar{\varphi}$ be a pure state on $A/J$.
Let $\pi\colon A\to A/J$ denote the quotient map.
Set $\varphi:=\bar{\varphi}\circ\pi$, which is a pure state on $A$.
Let $k\in\NN$.
In general, the restriction of a pure state to an ideal is either zero or again a pure state.
Since $J\notin\hull(I_k)$, we have $\varphi|_{I_k}\neq 0$, and thus $\|\varphi|_{I_k}\|=1$.
Thus, $\lim_{k\to\infty}\|\varphi|_{I_k}\|=1 \neq 0$.
\end{proof}

\begin{prp}
\label{prp:StoneWeier}
Let $A$ be a \ca{}, let $(I_k)_{k\in\NN}$ be a decreasing sequence of ideals such that $\bigcup_k \hull(I_k)=\Prim(A)$, and let $B\subseteq A$ be a sub-\ca.
Assume that $B/(B\cap I_k)=A/I_k$ for each $k$.
Then $B=A$.
\end{prp}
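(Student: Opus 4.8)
The plan is to argue by contradiction, using a separating functional together with the functional-analytic reformulation of the covering hypothesis supplied by \autoref{prp:charCoverPrim}. Suppose $B\neq A$. Since $B$ is a closed proper subspace of $A$, the Hahn--Banach theorem produces a nonzero $\varphi\in A^*$ with $\varphi|_B=0$. To bring in \autoref{prp:charCoverPrim} (which is stated for unital algebras) I would pass to the minimal unitization $\widetilde A$: each $I_k$ is an ideal of $\widetilde A$ contained in $A$, and the extra point $A\in\Prim(\widetilde A)$ (the kernel of $\widetilde A\to\mathbb{C}$) lies in every $\hull(I_k)$ because $I_k\subseteq A$, so the hypothesis $\bigcup_k\hull(I_k)=\Prim(A)$ upgrades to $\bigcup_k\hull(I_k)=\Prim(\widetilde A)$. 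Extending $\varphi$ to $\widetilde A$ with the same norm and observing that the restriction to $I_k$ is unchanged, \autoref{prp:charCoverPrim} gives $\lim_k\|\varphi|_{I_k}\|=0$.

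The heart of the matter is the norm estimate $\|\varphi\|\leq 2\,\|\varphi|_{I_k}\|$ for every $k$. The hypothesis $B/(B\cap I_k)=A/I_k$ says exactly that the inclusion $B\hookrightarrow A$ induces a ${}^*$-isomorphism $B/(B\cap I_k)\cong A/I_k$, which is automatically isometric. Let $\pi_k\colon A\to A/I_k$ be the quotient map. Given $a\in A$ with $\|a\|\leq 1$ and $\eta>0$, I would use this isometry to lift the class $\pi_k(a)$ to an element $b\in B$ with $\pi_k(b)=\pi_k(a)$ (hence $a-b\in I_k$) and $\|b\|\leq\|\pi_k(a)\|+\eta\leq 1+\eta$. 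Since $\varphi$ vanishes on $B$ we have $\varphi(a)=\varphi(a-b)$, and therefore
\[
|\varphi(a)| = |\varphi(a-b)| \leq \|\varphi|_{I_k}\|\,\|a-b\| \leq (2+\eta)\,\|\varphi|_{I_k}\|,
\]
using $\|a-b\|\leq\|a\|+\|b\|\leq 2+\eta$. Letting $\eta\to 0$ and taking the supremum over $\|a\|\leq 1$ yields $\|\varphi\|\leq 2\,\|\varphi|_{I_k}\|$.

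Combining the two steps, $\|\varphi\|\leq 2\lim_k\|\varphi|_{I_k}\|=0$, which contradicts $\varphi\neq 0$; hence $B=A$. The only delicate point is securing a \emph{norm-controlled} lift $b$ of $a$ inside $B$, and this is precisely where the isometry of the ${}^*$-isomorphism $B/(B\cap I_k)\cong A/I_k$ is essential. I do not expect any other genuine obstacle: even a weaker lift bounded by a fixed constant times $\|a\|$ would suffice, since any uniform inequality $\|\varphi\|\leq C\,\|\varphi|_{I_k}\|$ forces $\|\varphi\|=0$ once $\|\varphi|_{I_k}\|\to 0$. The remaining bookkeeping (the unitization reduction and the standard identifications of quotient norms) is routine.
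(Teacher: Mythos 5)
Your argument is correct, and its skeleton is the same as the paper's: separate $B$ from $A$ by a Hahn--Banach functional $\varphi$, pass to the unitization to invoke \autoref{prp:charCoverPrim} and get $\|\varphi|_{I_k}\|\to 0$, and exploit a norm-controlled lift of $\pi_k(a)$ into $B$ (which exists exactly because $B/(B\cap I_k)\to A/I_k$ is an isometric ${}^*$-isomorphism and $C^*$-quotient maps admit norm-preserving lifts). Where you genuinely diverge is the concluding estimate. The paper first \emph{corrects} $\varphi$ to a functional $\omega$ that vanishes on $I_k$, by writing $\varphi|_{I_k}$ as a combination of four positive functionals, extending each positively to $A$ with the same norm, and subtracting; it then plays $\|\varphi-\omega\|<\tfrac12$ against $\|\omega\|\leq\tfrac12$ to contradict $\|\varphi\|=1$. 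You bypass this entirely: since $a-b\in I_k$ and $\varphi(b)=0$, you estimate $|\varphi(a)|=|\varphi(a-b)|\leq\|\varphi|_{I_k}\|\,\|a-b\|\leq 2\|\varphi|_{I_k}\|$ directly, yielding $\|\varphi\|\leq 2\|\varphi|_{I_k}\|\to 0$. This is cleaner --- it avoids the Jordan-type decomposition of the functional and the positive extension theorem, and it makes transparent that any uniform bound $\|\varphi\|\leq C\|\varphi|_{I_k}\|$ suffices. The only price is nil; all the ingredients you use are already present in the paper's argument.
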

\begin{proof}
We first reduce to the unital case.
So assume that $A$ is nonunital, let $\widetilde{A}$ denote its minimal unitization, and let $\bar{B}$ denote the sub-\ca{} of $\widetilde{A}$ generated by $B$ and the unit of $\widetilde{A}$.
For each $k\in\NN$, we consider $I_k$ as an ideal in $\widetilde{A}$.
Let $\pi_k\colon A\to A/I_k$ and $\pi_k^+\colon\widetilde{A}\to \widetilde{A}/I_k$ denote the quotient maps.
Note that $\widetilde{A}/I_k$ is naturally isomorphic to $(A/I_k)^+$, the forced unitization of $A/I_k$.
By assumption, $\pi_k(B)=\pi_k(A)$.
It follows that $\pi_k^+(\bar{B})=\pi_k^+(\widetilde{A})$.
Further, $\Prim(\widetilde{A})$ is the union of the hulls of the $I_k$.
Then, assuming that the results holds in the unital case, we obtain $\bar{B}=\widetilde{A}$, which implies $B=A$.

Thus, we may assume from now on that $A$ is unital.
To reach a contradiction, assume that $B\neq A$.
Using Hahn-Banach, we choose $\varphi\in A^*$ with $\varphi|_{B}\equiv 0$ and $\|\varphi\|=1$.
Apply \autoref{prp:charCoverPrim} to obtain $k$ such that $\|\varphi|_{I_k}\|<\tfrac{1}{8}$. 
Since every functional is a linear combination of four states (\cite[Theorem~II.6.3.4, p.106]{Bla06OpAlgs}), we obtain $\psi_m\in (I_k)^*_+$ with $\varphi|_{I_k}=\sum_{m=0}^3 i^m \psi_m$, and we may also ensure that $\|\psi_m\|\leq\|\varphi|_{I_k}\|<\tfrac{1}{8}$.
Using \cite[Theorem~II.6.4.16, p.111]{Bla06OpAlgs}, we can extend each $\psi_m$ to a positive functional $\tilde{\psi}_m\in A^*_+$ with $\|\tilde{\psi}_m\|=\|\psi_m\|$.
Set $\omega:=\varphi-\sum_{m=0}^3 i^m \tilde{\psi}_m$.
Then $\omega\in A^*$ satisfies $\omega|_{I_k}\equiv 0$ and $\|\varphi-\omega\|<\tfrac{1}{2}$.

Let $\bar{\omega}\in (A/I_k)^*$ satisfy $\omega=\bar{\omega}\circ\pi_k$. 
Given $a\in A$, use that $A/I_k=\pi_k(B)$ to choose $b\in B$ with $\pi_k(b)=\pi_k(a)$ and $\|b\|=\|\pi_k(a)\|$; 
see \cite[Proposition~II.5.1.5]{Bla06OpAlgs}.
Then $\omega(a)=\omega(b)$, and thus
\[
|\omega(a)|
= |\omega(b)|
\leq |\omega(b)-\varphi(b)|+|\varphi(b)|
\leq \|\omega-\varphi\| \|b\|
\leq \tfrac{1}{2} \|a\|.
\]
Hence $\|\omega\|\leq\tfrac{1}{2}$, and so $1=\|\varphi\|\leq\|\varphi-\omega\|+\|\omega\|<1$, which is a contradiction.
\end{proof}

\begin{prp}
\label{prp:CST-gr}
Let $A$ be a separable \ca{}, and let $(I_k)_{k\in\NN}$ be a \emph{decreasing} sequence of ideals satisfying $\bigcup_k \hull(I_k)=\Prim(A)$.
Then
\[
\grPre(A) = \sup_k \grPre(A/I_k), \quad\text{ and }\quad
\gr(A) = \sup_k \gr(A/I_k).
\]
\end{prp}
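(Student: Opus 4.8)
The inequalities $\grPre(A)\geq\sup_k\grPre(A/I_k)$ and $\gr(A)\geq\sup_k\gr(A/I_k)$ are immediate from \autoref{prp:gr_idealQuotExt}, since each $A/I_k$ is a quotient of $A$ and neither rank increases under quotients. The content lies in the reverse inequality, which I would prove first for $\grPre$ and then bootstrap to $\gr$ by passing to the unitization.

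For $\grPre$, set $n:=\sup_k\grPre(A/I_k)$, which I may assume finite (if $n=\infty$ the reverse inequality is vacuous). The plan is to show that $\Gen_{n+1}(A)_\sa$ is dense in $A^{n+1}_\sa$ and then conclude via \autoref{prp:grSep}(1). Let $\pi_k\colon A\to A/I_k$ be the quotient maps. Each $A/I_k$ is separable, so by \autoref{prp:grSep}(1) the set $\Gen_{n+1}(A/I_k)_\sa$ is a dense $G_\delta$ in $(A/I_k)^{n+1}_\sa$. On self-adjoint parts $\pi_k$ is a surjective bounded real-linear map, hence open by the open mapping theorem, so the induced map $A^{n+1}_\sa\to(A/I_k)^{n+1}_\sa$ is a continuous open surjection. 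Therefore the preimage
\[
\mathcal{G}_k := \big\{ \vect{a}\in A^{n+1}_\sa : \pi_k(\vect{a})\in\Gen_{n+1}(A/I_k)_\sa \big\}
\]
is again a dense $G_\delta$: preimages of $G_\delta$-sets under continuous maps are $G_\delta$, and preimages of dense sets under open continuous surjections are dense. As $A^{n+1}_\sa$ is a complete metric space, the Baire category theorem shows $\mathcal{G}:=\bigcap_k\mathcal{G}_k$ is dense.

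Now any $\vect{a}\in\mathcal{G}$ already generates $A$: setting $B:=C^*(\vect{a})$, for each $k$ we have $\pi_k(B)=C^*(\pi_k(\vect{a}))=A/I_k$ since $\pi_k(\vect{a})$ generates $A/I_k$, that is, $B/(B\cap I_k)=A/I_k$ for all $k$. This is exactly the hypothesis of the Stone--Weierstrass-type result \autoref{prp:StoneWeier}, which forces $B=A$, so $\vect{a}\in\Gen_{n+1}(A)_\sa$. Hence $\Gen_{n+1}(A)_\sa\supseteq\mathcal{G}$ is dense and \autoref{prp:grSep}(1) yields $\grPre(A)\leq n$. For the $\gr$ statement I would apply this identity to the separable unitization $\widetilde{A}$, regarding each $I_k$ as an ideal of $\widetilde{A}$. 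Since $\Prim(\widetilde{A})=\Prim(A)\sqcup\{A\}$ and $I_k\subseteq A$, the extra point lies in every $\hull_{\widetilde{A}}(I_k)$ while the remaining part of $\hull_{\widetilde{A}}(I_k)$ equals $\hull(I_k)$; thus $\bigcup_k\hull_{\widetilde{A}}(I_k)=\Prim(\widetilde{A})$ and the hypothesis transfers. This gives $\gr(A)=\grPre(\widetilde{A})=\sup_k\grPre(\widetilde{A}/I_k)$. Identifying $\widetilde{A}/I_k$ with the forced unitization $(A/I_k)^+$ (as in the proof of \autoref{prp:StoneWeier}) and using $\grPre((A/I_k)^+)=\gr(A/I_k)$ — clear when $A/I_k$ is nonunital, and in the unital case the standard fact $\grPre(C\oplus\mathbb{C})=\grPre(C)$ from \cite{Thi12arX:GenRnk} — completes the computation.

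I expect the main obstacle to be getting the Baire setup exactly right: the density of each $\mathcal{G}_k$ rests on the openness of the quotient map on self-adjoint parts, and the whole scheme hinges on the passage from ``$\vect{a}$ generates every $A/I_k$'' to ``$\vect{a}$ generates $A$''. The latter, however, is precisely \autoref{prp:StoneWeier}, so once the density is in hand the argument closes at once; the only additional care is the unitization bookkeeping in the $\gr$ case.
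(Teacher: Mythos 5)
Your proposal is correct and follows essentially the same route as the paper: the same Baire-category argument with the preimages $\mathcal{G}_k$ of the fibrewise generating sets (the paper calls them $D_k$), the same appeal to the Stone--Weierstrass-type result \autoref{prp:StoneWeier} to pass from ``generates every $A/I_k$'' to ``generates $A$'', and the same unitization bookkeeping via $\widetilde{A}/I_k\cong(A/I_k)^+$ for the $\gr$ statement.
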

\begin{proof}
\emph{Part~1: We verify the equality for $\grPre$.}
For each $k$, set $B_k:=A/I_k$ and let $\pi_k\colon A\to B_k$ denote the quotient map.
By \autoref{prp:gr_idealQuotExt}, we have $\grPre(A)\geq\grPre(B_k)$.
It thus remains to prove $\grPre(A) \leq \sup_k \grPre(B_k)$.
Set $n:=\sup_k \grPre(B_k)$, which we may assume to be finite.
For each $k$, set
\[
D_k := \big\{ (a_0,\ldots,a_n)\in A^{n+1}_\sa : (\pi_k(a_0),\ldots,\pi_k(a_n))\in\Gen_{n+1}(B_k)_\sa \big\}.
\]
Since $\grPre(B_k)\leq n$, and since $B_k$ is separable, $\Gen_{n+1}(B_k)_\sa$ is a dense $G_\delta$-subset of $(B_k)^{n+1}_\sa$ by \autoref{prp:grSep}.
We deduce that $D_k$ is a dense $G_\delta$-subset of $A^{n+1}_\sa$.
Then, by the Baire category theorem, $D:=\bigcap_k D_k$ is a dense subset of $A^{n+1}_\sa$.

Let us show that $D\subseteq\Gen_{n+1}(A)_\sa$, which will imply that $\grPre(A)\leq n$.
Let $\vect{a}\in D$, and set $B:=C^*(\vect{a})\subseteq A$.
By construction, we have $\pi_k(B)=A/I_k$ for each~$k$.
Applying \autoref{prp:StoneWeier}, we get $B=A$, and thus $\vect{a}\in\Gen_{k}(A)_\sa$.

\emph{Part~2: We verify the equality for $\gr$.}
If $A$ is unital, this follows from Part~1.
So assume that $A$ is nonunital.
We consider $I_k$ as an ideal in $\widetilde{A}$.
As in the proof of \autoref{prp:StoneWeier}, we see that $\widetilde{A}/I_k\cong(A/I_k)^+$, and that $\Prim(\widetilde{A})$ is the union of the hulls of the $I_k$.
By \cite[Lemma~6.1]{Thi21GenRnk}, we have $\gr(B)=\gr(B^+)$ for every \ca{} $B$.
Applying Part~1 at the second step, we get
\[
\gr(A)
= \grPre(\widetilde{A})
= \sup_k \grPre(\widetilde{A}/I_k)
= \sup_k \grPre((A/I_k)^+)
= \sup_k \gr(A/I_k). \qedhere
\]
\end{proof}

\begin{lma}
\label{prp:gr_sums}
Let $A$ and $B$ be separable \ca{s}.
Assume that no nonzero quotient of $A$ is isomorphic to a quotient of $B$.
Then
\[
\grPre(A\oplus B)
= \max \big\{ \grPre(A),\grPre(B) \big\}, \ \text{ and }\
\gr(A\oplus B)
= \max \big\{ \gr(A),\gr(B) \big\}.
\]
\end{lma}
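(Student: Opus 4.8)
The plan is to prove both equalities by establishing the nontrivial inequality $\grPre(A\oplus B)\le\max\{\grPre(A),\grPre(B)\}$ and then deducing the statement for $\gr$ from it. The reverse inequality is immediate: since $A$ and $B$ are quotients of $A\oplus B$ (by the ideals $0\oplus B$ and $A\oplus 0$), \autoref{prp:gr_idealQuotExt} gives $\grPre(A\oplus B)\ge\max\{\grPre(A),\grPre(B)\}$ and likewise for $\gr$. Writing $n:=\max\{\grPre(A),\grPre(B)\}$, which we may assume finite, it thus remains to show $\grPre(A\oplus B)\le n$.

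The key ingredient is a Goursat-type description of the sub-\ca{s} of $A\oplus B$ that surject onto both factors. Given such a sub-\ca{} $C\subseteq A\oplus B$, I would set
\[
I_A:=\{a\in A:(a,0)\in C\},\qquad I_B:=\{b\in B:(0,b)\in C\}.
\]
Using surjectivity of the coordinate projections $C\to A$ and $C\to B$, one checks that $I_A$ and $I_B$ are (closed, two-sided) ideals and that $I_A\oplus I_B\subseteq C$. The maps $C\to A/I_A$ and $C\to B/I_B$ induced by the projections are then surjective with common kernel $I_A\oplus I_B$, so they induce an isomorphism $\theta\colon A/I_A\to B/I_B$, and $C$ is exactly the graph $\{(a,b):\theta(a+I_A)=b+I_B\}$. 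In particular $C=A\oplus B$ if and only if $A/I_A=B/I_B=0$. Hence, if $C\neq A\oplus B$, then $A/I_A$ is a \emph{nonzero} quotient of $A$ isomorphic (via $\theta$) to a quotient of $B$, contradicting the hypothesis. Therefore, under our assumption, every sub-\ca{} of $A\oplus B$ surjecting onto both factors must equal $A\oplus B$. Proving this lemma cleanly (it generalizes the simple-fiber case \cite[Lemma~5.9]{Thi12arX:GenRnk}) is the main obstacle; everything else is bookkeeping.

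With the lemma in hand, the rest is a density argument. For $(\vect{a},\vect{b})\in(A\oplus B)^{n+1}_\sa$, the algebra $C^*((\vect{a},\vect{b}))$ surjects onto $C^*(\vect{a})$ and $C^*(\vect{b})$ under the two coordinate projections; so if $\vect{a}\in\Gen_{n+1}(A)_\sa$ and $\vect{b}\in\Gen_{n+1}(B)_\sa$, the lemma forces $C^*((\vect{a},\vect{b}))=A\oplus B$. This gives the inclusion
\[
\Gen_{n+1}(A)_\sa\times\Gen_{n+1}(B)_\sa\subseteq\Gen_{n+1}(A\oplus B)_\sa
\]
under the identification $(A\oplus B)^{n+1}_\sa\cong A^{n+1}_\sa\times B^{n+1}_\sa$. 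Since $\grPre(A)\le n$ and $\grPre(B)\le n$ and both algebras are separable, \autoref{prp:grSep} shows that each factor on the left is a dense $G_\delta$-subset, whence so is their product. Thus $\Gen_{n+1}(A\oplus B)_\sa$ contains a dense subset and is itself a $G_\delta$ (the generating tuples of a separable \ca{} always form a $G_\delta$, as in \cite{Thi12arX:GenRnk}); being a dense $G_\delta$, \autoref{prp:grSep} yields $\grPre(A\oplus B)\le n$, completing the proof of the first equality.

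Finally, for the statement about $\gr$ I would avoid unitizing the direct sum (which does not split) and instead invoke \autoref{prp:grPre_gr}, which gives $\gr(D)=\max\{\rr(D),\grPre(D)\}$ for every \ca{} $D$. Combining the first equality with the standard additivity of the real rank, $\rr(A\oplus B)=\max\{\rr(A),\rr(B)\}$, I obtain
\[
\gr(A\oplus B)=\max\{\rr(A),\rr(B),\grPre(A),\grPre(B)\}=\max\{\gr(A),\gr(B)\},
\]
as desired.
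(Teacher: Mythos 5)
Your argument is correct and follows essentially the same route as the paper: the deduction of the $\gr$-equality from the $\grPre$-equality via $\gr(D)=\max\{\rr(D),\grPre(D)\}$ and additivity of the real rank is verbatim the paper's argument, while for the $\grPre$-equality the paper simply cites \cite[Proposition~5.10]{Thi12arX:GenRnk}, whose content is exactly what you prove directly (the Goursat-type description of sub-\ca{s} of $A\oplus B$ surjecting onto both factors, combined with the density of $\Gen_{n+1}(A)_\sa\times\Gen_{n+1}(B)_\sa$ and \autoref{prp:grSep}). Your self-contained version is sound -- in particular the verification that the kernel of $C\to A/I_A$ equals $I_A\oplus I_B$ and the $G_\delta$/density bookkeeping both check out -- so the only difference is that you open the black box the paper leaves closed.
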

\begin{proof}
The equality for $\grPre$ follows directly from \cite[Proposition~5.10]{Thi21GenRnk} by considering the ideal $I:=A$.
Applying \autoref{prp:grPre_gr} at the first and last step, and using the formula for $\grPre$ and that $\rr(A\oplus B)=\max\{\rr(A),\rr(B)\}$ at the second step, we get
\begin{align*}
\gr(A\oplus B)
&= \max \big\{ \grPre(A\oplus B), \rr(A\oplus B) \big\} \\
&= \max \big\{ \grPre(A), \grPre(B), \rr(A), \rr(B) \big\} 
= \max \big\{ \gr(A), \gr(B) \big\}. \qedhere
\end{align*}
\end{proof}

\begin{thm}
\label{prp:gr_subhom}
Let $A$ be a subhomogeneous \ca{}.
For each $d\geq 1$, set $X_d:=\Prim_d(A)$, the subset of the primitive ideal space of $A$ corresponding to $d$-dimensional irreducible representations.
Then:
\begin{align*}
\grPre(A)
= \gr(A)
&= \max\left\{ \locdim(X_1\times X_1), \max_{d\geq 2} \left\lceil\frac{\locdim(X_d)+1}{2d-2}\right\rceil \right\} 
\end{align*}
\end{thm}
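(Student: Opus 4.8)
The plan is to induct on $d$, where $A$ is $d$-subhomogeneous, after reducing to the separable case. Since $A$ is subhomogeneous, the function $\pi\mapsto\dim(\pi)$ is lower semicontinuous on $\Prim(A)$, so the set $X_d$ of top-dimensional irreducible representations is open and corresponds to an ideal $I\subseteq A$ that is $d$-homogeneous with $\Prim(I)\cong X_d$, while $A/I$ is $(d-1)$-subhomogeneous with $\Prim_j(A/I)\cong X_j$ for $j\leq d-1$. Write $R$ for the right-hand side of the claimed equality. The base case $d=1$ means $A$ is commutative, $A\cong C_0(X_1)$, and $R=\locdim(X_1\times X_1)$, so the statement is exactly \autoref{prp:gr_abln}. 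For the lower bound I would use that the subquotient $I^{(j)}/I^{(j+1)}$ (with $I^{(j)}$ the ideal corresponding to the open set $\{\dim\geq j\}$) is $j$-homogeneous with primitive ideal space $X_j$; since $\gr$ and $\grPre$ do not increase along ideals or quotients (\autoref{prp:gr_idealQuotExt}), \autoref{prp:gr_hom} gives $\gr(A)\geq\gr(I^{(j)}/I^{(j+1)})$, which equals the $j$-th term of $R$. Taking the maximum over $j$ yields $\gr(A)\geq R$ and $\grPre(A)\geq R$, with no separability needed.

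For the reverse inequality I would first treat the \emph{separable} case, and this is where the real content lies. The naive extension estimate of \autoref{prp:gr_idealQuotExt} only gives $\gr(A)\leq\gr(I)+\gr(A/I)+1$, which is too lossy; the point is to invoke \autoref{prp:CST-gr}. As $A$ is separable, $X_d$ is $\sigma$-compact, so I would write $X_d=\bigcup_k K_k$ as an increasing exhaustion by compact sets, take $J_k\subseteq I$ to be the ideal corresponding to the open set $X_d\setminus K_k$, and observe that $(J_k)_k$ is decreasing with $\bigcup_k\hull(J_k)=\Prim(A)$. Then \autoref{prp:CST-gr} gives $\gr(A)=\sup_k\gr(A/J_k)$ (and likewise for $\grPre$). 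The crucial observation is that $I/J_k\cong I(K_k)$ is $d$-homogeneous with \emph{compact} spectrum $K_k$, hence unital, and a unital ideal is a direct summand, so $A/J_k\cong I(K_k)\oplus A/I$. Because a nonzero quotient of the $d$-homogeneous algebra $I(K_k)$ has a $d$-dimensional irreducible representation while no quotient of the $(d-1)$-subhomogeneous algebra $A/I$ does, the summands share no nonzero quotient, and \autoref{prp:gr_sums} gives $\gr(A/J_k)=\max\{\gr(I(K_k)),\gr(A/I)\}$. Using $\sup_k\dim(K_k)=\locdim(X_d)$ together with \autoref{prp:gr_hom} then yields $\gr(A)=\max\{\gr(I),\gr(A/I)\}$, and the induction hypothesis applied to $A/I$ assembles $R$; running the same computation with $\grPre$ and the identity $\grPre=\gr$ on the homogeneous pieces shows $\grPre(A)=\gr(A)$.

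It then remains to remove separability for the upper bound, using the machinery of \autoref{sec:reduction}. Here I would produce a $\sigma$-complete, cofinal collection $\mathcal{S}\subseteq\SubSep(A)$ whose members $B$ are all $d$-subhomogeneous and satisfy $\locdim(\Prim_j(B))\leq\locdim(X_j)$ for $j\geq 2$ and $\locdim(\Prim_1(B)\times\Prim_1(B))\leq\locdim(X_1\times X_1)$. Such an $\mathcal{S}$ arises as a finite intersection: for each $j\geq 2$ apply \autoref{prp:LShom} to the $j$-homogeneous subquotient $I^{(j)}/I^{(j+1)}$ and pull the resulting collection back to $\SubSep(A)$ through the ideal and quotient operations via \autoref{prp:subcollections}; for $j=1$ use instead that $\gr$ satisfies axiom~(D6) on the abelian subquotient together with \autoref{prp:gr_abln} to control $\locdim(\Prim_1(B)\times\Prim_1(B))$. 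For each $B\in\mathcal{S}$ the separable case already proved gives $\gr(B)\leq R$, and since $A$ is approximated by $\mathcal{S}$, \autoref{prp:gr_approx} yields $\gr(A)\leq R$; the argument for $\grPre$ is identical.

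The hard part is the separable inductive step, specifically the passage from the weak extension bound to the sharp identity $\gr(A)=\max\{\gr(I),\gr(A/I)\}$. The two moves that make this succeed—reducing to a compact base $K_k$ via \autoref{prp:CST-gr} so that the top homogeneous ideal becomes unital, and then splitting it off as a direct summand so that \autoref{prp:gr_sums} replaces the lossy $+1$ estimate by a genuine maximum—are the heart of the proof. The nonseparable reduction is technically fiddly but routine given \autoref{prp:LShom}, \autoref{prp:subcollections}, and the axioms (D5),(D6); the only delicate point there is controlling all the local dimensions $\locdim(X_j)$ and the product dimension $\locdim(X_1\times X_1)$ simultaneously through a single cofinal family.
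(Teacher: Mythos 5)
Your proposal is correct and follows essentially the same route as the paper: the same lower bound via the homogeneous ideal-quotients, and the same key upper-bound mechanism (exhausting the top-dimensional part $X_m$ by compact sets so that the top homogeneous piece becomes a unital ideal, hence a direct summand, and then combining \autoref{prp:CST-gr} with \autoref{prp:gr_sums} and \autoref{prp:gr_hom}). The only difference is organizational: the paper interleaves the reduction to separable subalgebras into each step of the induction over the subhomogeneity degree, so it only needs to control the top homogeneous level of $B\cap I$ via \autoref{prp:LShom} together with $\gr(B/(B\cap I))\leq\gr(A/I)$ via axiom~(D6), whereas you complete the separable induction first and perform a single global nonseparable reduction controlling all levels at once.
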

\begin{proof}
By \autoref{prp:grPre_gr}, we have $\grPre(A)\leq\gr(A)$.
Given $d\geq 1$, let $A_d$ denote the ideal-quotient of $A$ corresponding to the locally closed set $\Prim_d(A)\subseteq\Prim(A)$.
Applying \autoref{prp:gr_idealQuotExt}, we obtain $\grPre(A_d) \leq\grPre(A)$.
Note that $A_d$ is $d$-homogeneous.
In particular, $A_1\cong C_0(X_1)$.
Using \autoref{prp:gr_abln}, we get
\[
\locdim(X_1\times X_1) 
= \grPre(A_1) 
\leq \grPre(A).
\]
For $d\geq 2$, applying \autoref{prp:gr_hom}, we get
\[
\left\lceil\frac{\locdim(X_d)+1}{2d-2}\right\rceil
= \grPre(A_d)
\leq \grPre(A).
\]

It remains to verify that
\begin{align}
\label{eq:subhom}
\gr(A) \leq \max\left\{ \locdim(X_1\times X_1), \sup_{d\geq 2} \left\lceil\frac{\locdim(X_d)+1}{2d-2}\right\rceil \right\}.
\end{align}

Recall that a \ca{} is $m$-subhomogeneous if each of its irreducible representations has dimension at most $m$.
We prove the inequality \eqref{eq:subhom} by induction over $m$.
Note that $1$-subhomogeneous \ca{s} are precisely commutative \ca{s}, in which case \eqref{eq:subhom}  follows from \autoref{prp:gr_abln}.

Let $m\geq 2$, assume that \eqref{eq:subhom} holds for $(m-1)$-subhomogeneous \ca{s}, and assume that $A$ is $m$-subhomogeneous.
Let $I$ be the ideal of $A$ corresponding to the open subset $X_m\subseteq\Prim(A)$.
Note that $A/I$ is $(m-1)$-subhomogeneous.
Set
\[
n := \max\left\{ \locdim(X_1\times X_1), \sup_{d=1,\ldots,m-1} \left\lceil\frac{\locdim(X_d)+1}{2d-2}\right\rceil \right\}.
\]
By assumption of the induction, we have $\gr(A/I)\leq n$.
We need to prove that
\[
\gr(A)\leq\max\left\{ n, \left\lceil\frac{\locdim(X_m)+1}{2m-2}\right\rceil \right\}.
\]

Set $l := \locdim(X_m)$ and
\begin{align*}
\mathcal{S}_1 &:= \big\{ B\in\SubSep(A) : \gr(B/(B\cap I))\leq n \big\}, \quad\text{ and }\quad \\
\mathcal{S}_2 &:= \big\{ B\in\SubSep(A) : B\cap I \text{ $m$-homogeneous}, \locdim(\Prim(B\cap I))\leq l \big\}.
\end{align*}

As noted in \autoref{pgr:LS}, the collection $\{D\in\SubSep(A/I) : \gr(D)\leq n\}$ is  $\sigma$-complete and cofinal.
Applying \autoref{prp:subcollections}(2), we obtain that $\mathcal{S}_1$ is $\sigma$-complete and cofinal.
Similarly, using \autoref{prp:LShom} and \autoref{prp:subcollections}(1), we see that $\mathcal{S}_2$ is $\sigma$-complete and cofinal.
Hence, $\mathcal{S}_1\cap\mathcal{S}_2$ is $\sigma$-complete and cofinal as well.
Using \autoref{prp:gr_approx}, and using that $A$ is approximated by $\mathcal{S}_1\cap\mathcal{S}_2$, it suffices to verify that every $B\in\mathcal{S}_1\cap\mathcal{S}_2$ satisfies
\[
\gr(B)
\leq \max \left\{ n, \left\lceil\frac{l+1}{2m-2}\right\rceil \right\}.
\]

Let $B\in\mathcal{S}_1\cap\mathcal{S}_2$.
Set $J:=B\cap I$.
By construction, $J$ is $m$-homogeneous with $\locdim(\Prim(J))\leq l$, and $B/J$ is $(m-1)$-subhomogeneous with $\gr(B/J)\leq n$.
Note that $J$ is the ideal of $B$ corresponding to $\Prim_m(B)$.
Since $B$ is separable, $\Prim_m(B)$ is $\sigma$-compact.
Choose an increasing sequence $(Y_k)_{k\in\NN}$ of compact subsets of $\Prim_m(B)$ such that $\Prim_m(B)=\bigcup_k Y_k$.

For each $k$, note that $Y_k\subseteq \Prim_m(B)$ is closed, and let $J_k$ be the ideal of $J$ corresponding to the open subset $\Prim_m(B)\setminus Y_k$.
Considering $J_k$ as an ideal of~$B$, we have $B/J_k\cong (J/J_k) \oplus B/J$.
Since $J/J_k$ is $m$-homogeneous, and $B/J$ is $(m-1)$-subhomogeneous, no nonzero quotient of $J/J_k$ is isomorphic to a quotient of $B/J$.
Applying \autoref{prp:gr_sums}, we obtain 
\[
\gr(B/J_k)
= \gr\big( (J/J_k) \oplus B/J \big) 
= \max\big\{ \gr(J/J_k), \gr(B/J) \big\}.
\]

Since $J/J_k$ is a quotient of $J$, and $J$ is $m$-homogeneous with $\locdim(\Prim(J))\leq l$, it follows from \autoref{prp:gr_idealQuotExt} and \autoref{prp:gr_hom} that
\[
\gr(J/J_k) 
\leq \gr(J) 
= \left\lceil\frac{\locdim(\Prim(J))+1}{2m-2}\right\rceil
\leq \left\lceil\frac{l+1}{2m-2}\right\rceil.
\]
Applying \autoref{prp:CST-gr} at the first step, we obtain
\begin{align*}
\gr(B) 
= \sup_k \gr(B/J_k)
&= \sup_k \max\big\{ \gr(J/J_k), \gr(B/J) \big\} 
\leq 
\max \left\{ n, \left\lceil\frac{l+1}{2m-2}\right\rceil \right\},
\end{align*}
as desired.
\end{proof}

\begin{rmk}
\label{rmk:grSubhom}
Let $A$ be a $m$-subhomogeneous \ca{}.
For $d=1,\ldots,m$, let $A_d$ be the ideal-quotient of $A$ corresponding to the locally closed subset $\Prim_d(A)$. 
Then it follows from \autoref{prp:gr_subhom} that
\[
\gr(A) = \max\big\{ \gr(A_1), \ldots, \gr(A_m) \big\}.
\]
Analogous formulas hold for the real and stable rank;
see \cite[Lemma~3.4]{Bro16HigherRRSR}.
\end{rmk}

\begin{cor}
\label{prp:gr_sum_subhom}
Let $A$ and $B$ be subhomogeneous \ca{s}.
Then:
\[
\gr(A \oplus B)
= \max\big\{ \gr(A), \gr(B) \big\}.
\]
\end{cor}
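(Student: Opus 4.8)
The plan is to avoid the formula of \autoref{prp:gr_subhom} directly and instead reduce everything to the homogeneous case already settled in \autoref{prp:gr_sum_d-hom}, by decomposing all three algebras into their homogeneous subquotients via \autoref{rmk:grSubhom}. The advantage of this route is that one never has to form a product of two \emph{distinct} spaces, and so the delicate behavior of covering dimension under products is entirely bypassed.

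First I would choose an integer $m$ such that both $A$ and $B$ are $m$-subhomogeneous; then $A\oplus B$ is $m$-subhomogeneous as well, since every irreducible representation of $A\oplus B$ factors through one of the two summands. The same observation identifies the primitive ideal space as the disjoint union $\Prim(A\oplus B)\cong\Prim(A)\sqcup\Prim(B)$, with both pieces clopen, and restricting to representations of a fixed dimension $d$ yields $\Prim_d(A\oplus B)\cong\Prim_d(A)\sqcup\Prim_d(B)$.

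Next I would pass to the $d$-homogeneous subquotients. For $d=1,\ldots,m$, write $A_d$, $B_d$ and $(A\oplus B)_d$ for the ideal-quotients associated to the $d$-dimensional parts. Because $\Prim(A)$ and $\Prim(B)$ sit as clopen subsets of $\Prim(A\oplus B)$, forming the $d$-dimensional subquotient is compatible with the direct-sum decomposition, so $(A\oplus B)_d\cong A_d\oplus B_d$ with both summands $d$-homogeneous. Applying \autoref{prp:gr_sum_d-hom} then gives $\gr((A\oplus B)_d)=\max\{\gr(A_d),\gr(B_d)\}$ for each $d$. Finally, applying \autoref{rmk:grSubhom} to the three $m$-subhomogeneous algebras $A$, $B$ and $A\oplus B$ and interchanging the two maxima gives
\[
\gr(A\oplus B)=\max_{1\le d\le m}\gr((A\oplus B)_d)=\max_{1\le d\le m}\max\{\gr(A_d),\gr(B_d)\}=\max\{\gr(A),\gr(B)\},
\]
where the outer equalities are \autoref{rmk:grSubhom} applied to $A\oplus B$ and then to $A$ and $B$, and the inner equality is \autoref{prp:gr_sum_d-hom}.

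The only genuinely subtle point hides in the case $d=1$: there $\gr((A\oplus B)_1)$ a priori involves $\locdim\big((\Prim_1(A)\sqcup\Prim_1(B))\times(\Prim_1(A)\sqcup\Prim_1(B))\big)$, whose cross terms $\locdim(\Prim_1(A)\times\Prim_1(B))$ need not be controlled by $\locdim(\Prim_1(A)^2)$ and $\locdim(\Prim_1(B)^2)$ for general spaces. The whole reason for routing the argument through \autoref{prp:gr_sum_d-hom} rather than substituting into the formula of \autoref{prp:gr_subhom} is precisely that this product-dimension phenomenon has already been absorbed into that homogeneous result, so no further dimension-theoretic input is required here.
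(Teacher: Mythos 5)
Your proposal is correct and follows essentially the same route as the paper: both decompose $A$, $B$ and $A\oplus B$ into their $d$-homogeneous ideal-quotients via \autoref{rmk:grSubhom}, identify $(A\oplus B)_d\cong A_d\oplus B_d$, and invoke \autoref{prp:gr_sum_d-hom} before interchanging the maxima. Your closing remark about the $d=1$ cross terms being absorbed into the homogeneous lemma is a fair observation, but it does not change the argument.
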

\begin{proof}
Let $m\geq 1$ be such that $A$ and $B$ are $m$-subhomogeneous.
Let $A_d$ be the ideal-quotient of $A$ corresponding to $\Prim_d(A)$, and analogous for $B_d$, for $d=1,\ldots,m$.
Then $A_d$ and $B_d$ are $d$-homogeneous, and $A_d\oplus B_d$ is naturally isomorphic to the ideal-quotient of $A\oplus B$ corresponding to $\Prim_d(A\oplus B)$.
Applying \autoref{prp:gr_subhom} (see also \autoref{rmk:grSubhom}) at the first and last step, and using \autoref{prp:gr_sum_d-hom} at the second step, we get
\begin{align*}
\gr(A \oplus B)
&= \max_{d=1,\ldots,m} \gr(A_d\oplus B_d)
= \max_{d=1,\ldots,m} \max\big\{ \gr(A_d),\gr(B_d) \big\} \\
&=  \max\left\{ \max_{d=1,\ldots,m}\gr(A_d),\max_{d=1,\ldots,m}\gr(B_d) \right\} 
= \max\big\{ \gr(A), \gr(B) \big\}.\qedhere
\end{align*}
\end{proof}

It is natural to expect that the generator rank of a direct sum of \ca{s} is the maximum of the generator ranks of the summands.
The next result shows that this is the case if one of the summands is subhomogeneous.
In general, however, this is unclear;
see \cite[Questions~2.12, 6.4]{Thi21GenRnk}.

\begin{prp}
\label{prp:gr_sumWithSubhom}
Let $A$ and $B$ be \ca{s} and assume that $B$ is subhomogeneous.
Then:
\[
\grPre(A\oplus B)=\max\{\grPre(A),\grPre(B)\}, \quad\text{ and }\quad
\gr(A\oplus B)=\max\{\gr(A),\gr(B)\}.
\]
\end{prp}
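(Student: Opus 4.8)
The lower bound is immediate: viewing $A$ as an ideal of $A\oplus B$ with quotient $B$ (and vice versa), \autoref{prp:gr_idealQuotExt} gives $\max\{\grPre(A),\grPre(B)\}\le\grPre(A\oplus B)$ and likewise for $\gr$. The whole difficulty lies in the reverse inequalities, and the obstruction is the possible presence of common quotients of $A$ and $B$: this is exactly the situation \autoref{prp:gr_sums} cannot handle, and it is here that subhomogeneity of $B$ must be used. The plan is to isolate one crucial statement and reduce everything to it, namely:
\emph{$(\star)$ if $H$ is a homogeneous \ca{} and $A$ is a separable \ca{}, then $\grPre(A\oplus H)=\max\{\grPre(A),\grPre(H)\}$ and $\gr(A\oplus H)=\max\{\gr(A),\gr(H)\}$.}

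Granting $(\star)$, I would first reduce to the separable case. Set $n:=\max\{\gr(A),\gr(B)\}$. Since $\gr$ satisfies~(D6) (see \autoref{pgr:LS}), the collections $\{C\in\SubSep(A):\gr(C)\le n\}$ and $\{C\in\SubSep(B):\gr(C)\le n\}$ are cofinal, and every separable subalgebra $C_0\subseteq A\oplus B$ is contained in one of the form $C_A\oplus C_B$ with $C_A,C_B$ drawn from these collections (take $C_A\supseteq p_A(C_0)$ and $C_B\supseteq p_B(C_0)$ for the two coordinate projections); each such $C_B$ is automatically subhomogeneous. Thus $A\oplus B$ is approximated by subalgebras $C_A\oplus C_B$ with $C_A,C_B$ separable and $\gr\le n$, so once the separable case is established, \autoref{prp:gr_approx} yields $\gr(A\oplus B)\le n$, and the identical argument handles $\grPre$.

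For the separable case I would induct on the least $m$ with $B$ $m$-subhomogeneous. When $m=1$ the algebra $B$ is commutative, hence $1$-homogeneous, and the statement is exactly $(\star)$. For the step, let $J\subseteq B$ be the ideal corresponding to the open set $\Prim_m(B)$, so $J$ is $m$-homogeneous and $B/J$ is $(m-1)$-subhomogeneous. Exhaust the $\sigma$-compact space $\Prim_m(B)$ by increasing compacta $Y_k$, and let $J_k\subseteq J$ be the ideal corresponding to the open set $\Prim_m(B)\setminus Y_k$. Regarded as ideals of $A\oplus B$, the $J_k$ decrease and satisfy $\bigcup_k\hull(J_k)=\Prim(A\oplus B)$, so \autoref{prp:CST-gr} gives $\gr(A\oplus B)=\sup_k\gr\big((A\oplus B)/J_k\big)$. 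As in the proof of \autoref{prp:gr_subhom}, $Y_k$ is clopen in $\Prim(B/J_k)$, so the top layer splits off and
\[
(A\oplus B)/J_k \;\cong\; \big[A\oplus(B/J)\big]\oplus(J/J_k),
\]
with $J/J_k$ being $m$-homogeneous over the compact base $Y_k$. Now I would apply $(\star)$ with the bracketed algebra as the arbitrary summand, the inductive hypothesis to bound $\gr\big(A\oplus(B/J)\big)=\max\{\gr(A),\gr(B/J)\}\le n$, and \autoref{prp:gr_idealQuotExt} to bound $\gr(J/J_k)\le\gr(J)\le\gr(B)\le n$; taking the supremum over $k$ closes the induction, and the same computation works verbatim for $\grPre$.

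The hard part will be $(\star)$ itself, where common quotients genuinely occur, and I would prove it by the method of \autoref{sec:hom}. After reducing to separable $A$ and, via \autoref{prp:CST-gr} and \autoref{prp:LShom}, to $H=C(Y,M_d)$ with $Y$ compact metric of controlled dimension, a tuple $(\vect{a},\vect{h})$ should generate $A\oplus H$ precisely when $\vect{a}$ generates $A$, $\vect{h}$ generates $H$, and the induced map $y\mapsto[\vect{h}(y)]\in G_d^{n+1}/\PU_d$ is injective and \emph{avoids} the subset $S_A\subseteq G_d^{n+1}/\PU_d$ of classes realized by the images of $\vect{a}$ in the $d$-dimensional simple quotients of $A$; this is the direct-sum analogue of \autoref{prp:generators_C(X)-alg}. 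The crux is a dimension count: both $\dim(Y)$ and $\locdim(\Prim_d(A))$ are bounded in terms of $n$, so that $\dim(Y)+\dim(S_A)<\dim(G_d^{n+1}/\PU_d)=nd^2+1$, and the general-position results of \cite{Luu81ApproxByEmb} and \cite{BegEva91RRMatrixValued} used in \autoref{prp:dimEdm} and \autoref{prp:gr_hom_sep}, combined with the homotopy-lifting argument of \autoref{prp:HELP}, then allow me to perturb $\vect{h}$ within fiberwise generators so as to become injective and to miss $S_A$. The case $d=1$ is handled as in the commutative computation of \cite[Section~5]{Thi12arX:GenRnk}. It is this extra transversality against the bad set $S_A$ coming from the arbitrary summand $A$ that is the essential new ingredient, and it is the step I expect to require the most care.
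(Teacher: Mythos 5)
Your reduction of the proposition to the single statement $(\star)$ is sound: the passage to the separable case via axiom~(D6), the induction on $m$, and the use of \autoref{prp:CST-gr} together with the splitting $(A\oplus B)/J_k\cong\big[A\oplus(B/J)\big]\oplus(J/J_k)$ all work. But $(\star)$ is exactly where the content of the proposition lives, and you have not proved it; you have only sketched a program, and the sketch has a genuine gap. The set $S_A$ that the map $y\mapsto[\vect{h}(y)]$ is supposed to avoid is the image of the merely locally closed, locally compact space $\Prim_d(A)$ under a continuous map depending on $\vect{a}$: it is in general neither a submanifold nor even a closed subset of $G^{n+1}_d/\PU_d$, and its closure can have strictly larger covering dimension than $S_A$ itself. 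The general-position tools you invoke --- \cite[Lemma~1.4]{BegEva91RRMatrixValued} (avoiding submanifolds of controlled codimension) and \cite[Theorem~5.1]{Luu81ApproxByEmb} (approximation by embeddings) --- do not apply to avoiding such a set, so the transversality step that you yourself flag as the crux would fail as described. There is the further complication that $\vect{a}$ must first be perturbed into a generator of $A$ and then held fixed, since for a general separable $A$ the generating tuples form only a dense $G_\delta$ and not an open set, so the ``bad set'' $S_A$ cannot be moved once this is done.

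The idea you are missing is that no transversality is needed at all: common quotients can be killed structurally rather than avoided geometrically. The paper's proof takes the \emph{smallest} ideal $I\subseteq A$ such that $A/I$ is $m$-subhomogeneous. Then $I$ has no nonzero $m$-subhomogeneous quotient (an ideal of $I$ is an ideal of $A$, and an extension of an $m$-subhomogeneous algebra by an $m$-subhomogeneous algebra is again $m$-subhomogeneous, contradicting minimality), hence no nonzero quotient of $I$ is isomorphic to a quotient of $(A\oplus B)/I\cong(A/I)\oplus B$. The ideal version of \autoref{prp:gr_sums}, namely \cite[Proposition~5.10]{Thi12arX:GenRnk}, then gives $\grPre(A\oplus B)=\max\{\grPre(I),\grPre((A/I)\oplus B)\}$, and \autoref{prp:gr_sum_subhom} computes the second term as $\max\{\grPre(A/I),\grPre(B)\}$; the statement for $\gr$ follows via \autoref{prp:grPre_gr}. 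This proves your $(\star)$, and indeed the full proposition without any separability hypothesis, in a few lines.
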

\begin{proof}
Let $m\geq 1$ such that $B$ is $m$-subhomogeneous.
The proof proceeds analogous to that of \cite[Proposition~5.12]{Thi21GenRnk} (which is the result for $m=1$) by considering the smallest ideal $I\subseteq A$ such that $A/I$ is $m$-subhomogeneous
(instead of the smallest $I$ such that $A/I$ is commutative), and by using \autoref{prp:gr_sum_subhom} instaed of \cite[Proposition~5.9]{Thi21GenRnk}.
\end{proof}

\begin{lma}
\label{prp:gr_ZstableSH}
Let $A$ be a nonzero, subhomogeneous \ca{}.
Then $\gr(A\otimes\mathcal{Z})=1$.
\end{lma}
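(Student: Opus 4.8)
The plan is to establish the two inequalities separately. We may assume $A\neq 0$, since otherwise $A\otimes\mathcal{Z}=0$ and the statement is vacuous. For the lower bound, observe that $A\otimes\mathcal{Z}$ is noncommutative (as $\mathcal{Z}$ is and $A\neq 0$), so $\gr(A\otimes\mathcal{Z})\geq 1$ by \cite[Proposition~5.6]{Thi12arX:GenRnk}. The substance is the upper bound $\gr(A\otimes\mathcal{Z})\leq 1$. I would write $\mathcal{Z}=\varinjlim_n Z_{p_n,q_n}$ as an inductive limit of prime dimension-drop algebras with unital, injective connecting maps and with $p_n,q_n$ coprime, $\geq 2$, and $p_n,q_n\to\infty$. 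Then $A\otimes\mathcal{Z}=\varinjlim_n\bigl(A\otimes Z_{p_n,q_n}\bigr)$ and each $A\otimes Z_{p_n,q_n}$ is subhomogeneous.

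The heart of the matter is a dimension computation, which I would first carry out assuming $L:=\locdim(\Prim(A))$ is finite. Each $Z_{p,q}$ sits in an extension $0\to SM_{pq}\to Z_{p,q}\to M_p\oplus M_q\to 0$ given by evaluation at the endpoints of $[0,1]$, where $SM_{pq}=C_0((0,1))\otimes M_{pq}$. Tensoring with $A$ and using nuclearity of $Z_{p,q}$ gives
\[
0\to C_0\bigl((0,1),A\otimes M_{pq}\bigr)\to A\otimes Z_{p,q}\to (A\otimes M_p)\oplus(A\otimes M_q)\to 0.
\]
Reading off the primitive ideal space, the $D$-dimensional part $\Prim_D(A\otimes Z_{p,q})$ is a disjoint union of at most three pieces: a copy of $\Prim_e(A)\times(0,1)$ when $D=e\cdot pq$, and copies of $\Prim_e(A)$ when $D=ep$ or $D=eq$. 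Since each $\Prim_e(A)$ is a locally compact Hausdorff subspace of $\Prim(A)$ we have $\locdim(\Prim_e(A))\leq L$ and hence $\locdim(\Prim_D(A\otimes Z_{p,q}))\leq L+1$ for every $D\geq 2$, while $p,q\geq 2$ forces $\Prim_1(A\otimes Z_{p,q})=\emptyset$. As every such $D$ satisfies $D\geq\min(p,q)$, \autoref{prp:gr_subhom} yields
\[
\gr(A\otimes Z_{p,q})\leq\left\lceil\frac{L+2}{2\min(p,q)-2}\right\rceil,
\]
which is $\leq 1$ as soon as $\min(p,q)\geq (L+4)/2$. Together with \autoref{prp:gr_approx}, this settles the case $L<\infty$.

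To treat arbitrary $A$ (where $L$ may be infinite), I would reduce to the finite case by approximation. The key input is that every subhomogeneous \ca{} $A$ is approximated, in the sense of \autoref{prp:gr_approx}, by subhomogeneous sub-\ca{s} $A_\lambda\subseteq A$ with $\locdim(\Prim(A_\lambda))<\infty$; for trivial bundles $C(X,M_d)$ this follows by writing the compact space $X$ as an inverse limit of finite polyhedra, and the general (possibly twisted, possibly nonseparable) case is obtained by combining this with the recursive subhomogeneous structure and the Löwenheim--Skolem machinery of \autoref{sec:reduction}. Granting this, $A\otimes\mathcal{Z}$ is approximated by the sub-\ca{s} $A_\lambda\otimes Z_{p_n,q_n}$: given a finite set and a tolerance, one first chooses $\lambda$ absorbing the $A$-coordinates, and then $n$ large enough both to absorb the $\mathcal{Z}$-coordinates and to satisfy $\min(p_n,q_n)\geq(\locdim(\Prim(A_\lambda))+4)/2$, so the computation above gives $\gr(A_\lambda\otimes Z_{p_n,q_n})\leq 1$. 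Thus $A\otimes\mathcal{Z}$ is approximated by sub-\ca{s} of generator rank at most one, and \autoref{prp:gr_approx} gives $\gr(A\otimes\mathcal{Z})\leq 1$.

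The main obstacle is the approximation statement in the last paragraph: proving that a general subhomogeneous \ca{} is exhausted by subhomogeneous sub-\ca{s} of finite topological dimension. While geometrically transparent for $C(X,M_d)$ via inverse limits of polyhedra, doing this uniformly for twisted $M_d$-bundles and in the nonseparable setting is where I expect the real work to lie. A secondary point requiring care is the precise identification of $\Prim_D(A\otimes Z_{p,q})$ and the monotonicity $\locdim(\Prim_e(A))\leq\locdim(\Prim(A))$, both of which rest on the homogeneous sub-quotient structure of $A$.
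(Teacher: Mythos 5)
Your overall architecture (lower bound from noncommutativity, writing $\mathcal{Z}$ as $\varinjlim_n Z_{p_n,q_n}$, computing $\Prim_D(A\otimes Z_{p,q})$ from the extension $0\to C_0((0,1))\otimes M_{pq}\to Z_{p,q}\to M_p\oplus M_q\to 0$, and feeding the resulting dimension bound into \autoref{prp:gr_subhom}) matches the paper's, and your dimension computation for the case of finite topological dimension is correct and fills in details the paper leaves implicit. The problem is the step you yourself flag as ``the main obstacle'': the claim that an arbitrary subhomogeneous \ca{} is approximated by subhomogeneous sub-\ca{s} of finite topological dimension. As written, this is a genuine gap --- you assert it, sketch a route for trivial bundles via inverse limits of polyhedra, and defer the twisted and nonseparable cases to unspecified ``recursive subhomogeneous structure and L\"{o}wenheim--Skolem machinery.'' None of that is carried out, and it is the only nontrivial input your argument still needs.

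The paper closes exactly this gap by a much shorter device that makes the hard approximation statement unnecessary: for each finite subset $F\subseteq A$ set $A_F:=C^*(F)$. These sub-\ca{s} obviously approximate $A$ (in the sense of \autoref{prp:gr_approx}), they are automatically subhomogeneous, and --- this is the key citation --- by Theorem~1.5 of Ng--Winter \cite{NgWin06NoteSH} a \emph{finitely generated} subhomogeneous \ca{} satisfies $\locdim(\Prim_d(A_F))\leq k$ for some $k$ uniform in $d$. Your finite-dimensional computation then applies verbatim to $A_F$ (with $L$ replaced by $\sup_d\locdim(\Prim_d(A_F))$, which is the quantity you actually use, since $\Prim(A)$ itself need not be Hausdorff and ``$\locdim(\Prim(A))$'' is not quite the right invariant for a subhomogeneous algebra), giving $\gr(A_F\otimes Z_{p,q})\leq 1$ for $p,q$ large, hence $\gr(A_F\otimes\mathcal{Z})\leq 1$, hence $\gr(A\otimes\mathcal{Z})\leq 1$ by two applications of \autoref{prp:gr_approx}. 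So the fix is not to prove your approximation statement but to replace it with the Ng--Winter theorem applied to finitely generated subalgebras.
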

\begin{proof}
Given a finite subset $F\subseteq A$, set $A_F:=C^*(F)\subseteq A$.
Then $A_F$ is a finitely generated, subhomogeneous \ca.
By \cite[Theorem~1.5]{NgWin06NoteSH}, there is $k\in\NN$ such that $\locdim(\Prim_d(A_F))\leq k$ for every $d\geq 1$.
For $p,q\in\NN$ let $Z_{p,q}$ denote the dimension-drop algebra
\[
Z_{p,q} = \big\{ f\colon [0,1] \to M_p\otimes M_q : f \text{ continuous}, f(0)\in 1\otimes M_q, f(1)\in M_p\otimes 1 \big\}.
\]
For $p$ and $q$ sufficiently large (for example $p,q\geq k+2$), it follows from \autoref{prp:gr_subhom} that $\gr(A_F\otimes Z_{p,q})\leq 1$.
Using that $\mathcal{Z}$ is an inductive limit of dimension-drop algebras $Z_{p_n,q_n}$ with $\lim_n p_n = \lim_n q_n =\infty$, we have $\gr(A_F\otimes\mathcal{Z})\leq 1$ by \autoref{prp:gr_approx}.
The family of sub-\ca{s} $A_F\otimes\mathcal{Z}\subseteq A\otimes\mathcal{Z}$, indexed over the finite subsets of $A$ ordered by inclusion, approximates $A\otimes\mathcal{Z}$, whence $\gr(A\otimes\mathcal{Z})\leq 1$ by \autoref{prp:gr_approx}.

By \cite[Proposition~5.7]{Thi21GenRnk}, every noncommutative \ca{} has generator rank at least one, and thus  $\gr(A\otimes\mathcal{Z})=1$.
\end{proof}

\begin{thm}
\label{prp:gr_ZstableASH}
Every nonzero, $\mathcal{Z}$-stable ASH-algebra has generator rank one.

If $A$ is a separable, $\mathcal{Z}$-stable ASH-algebra, then a generic element of $A$ is a generator.
\end{thm}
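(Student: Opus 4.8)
The plan is to reduce everything to \autoref{prp:gr_ZstableSH}, which already treats the subhomogeneous case, using that the generator rank does not increase under approximation (\autoref{prp:gr_approx}). I assume $A\neq 0$. Since $A$ is $\mathcal{Z}$-stable, I identify $A$ with $A\otimes\mathcal{Z}$. As an ASH-algebra, $A$ is a (not necessarily sequential) inductive limit $A=\varinjlim_\lambda A_\lambda$ of subhomogeneous \ca{s}; replacing each $A_\lambda$ by its image in $A$, which is again subhomogeneous since quotients of subhomogeneous \ca{s} are subhomogeneous, I may assume that the $A_\lambda$ are subhomogeneous sub-\ca{s} of $A$ with dense union.

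First I would pass to tensor products with $\mathcal{Z}$. By continuity of the minimal tensor product, the algebras $A_\lambda\otimes\mathcal{Z}$ are sub-\ca{s} of $A\otimes\mathcal{Z}$ whose union is dense, so $A\otimes\mathcal{Z}$ is approximated by the family $\{A_\lambda\otimes\mathcal{Z}\}$ in the sense of \autoref{prp:gr_approx}. Each $A_\lambda$ is subhomogeneous, so $\gr(A_\lambda\otimes\mathcal{Z})=1$ by \autoref{prp:gr_ZstableSH}. The approximation part of \autoref{prp:gr_approx} then gives
\[
\gr(A)=\gr(A\otimes\mathcal{Z})\leq 1.
\]
For the reverse inequality, I note that $A\cong A\otimes\mathcal{Z}$ is noncommutative (since $A\neq 0$ and $\mathcal{Z}$ is noncommutative), whence $\gr(A)\geq 1$ by \cite[Proposition~5.6]{Thi12arX:GenRnk}. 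Combining the two inequalities proves $\gr(A)=1$, which is the first assertion.

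For the second assertion I would simply invoke \autoref{rmk:gr1}: if $A$ is moreover separable, then $\gr(A)\leq 1$ implies that the set of (non-self-adjoint) generators of $A$ is a dense $G_\delta$-subset, that is, a generic element of $A$ is a generator.

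I do not expect a serious obstacle here, since the essential content is already packaged in \autoref{prp:gr_ZstableSH}. The only point that needs a little care is organizing the subhomogeneous building blocks as a dense family of sub-\ca{s} of $A\otimes\mathcal{Z}$ when the connecting maps of the inductive system fail to be injective; this is handled by passing to the images in $A$ and using continuity of the minimal tensor product, after which \autoref{prp:gr_approx} applies directly.
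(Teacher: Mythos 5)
Your proof is correct and follows essentially the same route as the paper: tensor the subhomogeneous building blocks with $\mathcal{Z}$, apply \autoref{prp:gr_ZstableSH} to each $A_\lambda\otimes\mathcal{Z}$, pass to the limit via \autoref{prp:gr_approx}, and obtain the lower bound from noncommutativity together with \cite[Proposition~5.6]{Thi12arX:GenRnk} and the generic statement from \autoref{rmk:gr1}. The only cosmetic difference is that you replace the inductive system by the images of the $A_\lambda$ in $A$ and use the subalgebra-approximation form of \autoref{prp:gr_approx}, whereas the paper writes $A\cong\varinjlim_\lambda A_\lambda\otimes\mathcal{Z}$ and uses the inductive-limit form directly; these are equivalent.
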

\begin{proof}
Let $A$ be a nonzero, $\mathcal{Z}$-stable ASH-algebra.
Let $(A_\lambda)_\lambda$ be an inductive system of subhomogeneous \ca{s} such that $A\cong\varinjlim_\lambda A_\lambda$.
Then
\[
A\cong A\otimes\mathcal{Z} \cong \varinjlim_\lambda A_\lambda\otimes\mathcal{Z}.
\]
By \autoref{prp:gr_ZstableSH}, we have $\gr(A_\lambda\otimes\mathcal{Z})\leq 1$ for each $\lambda$.
Using \autoref{prp:gr_approx}, we get $\gr(A)\leq 1$.
Since $A$ is noncommutative, we deduce that $\gr(A)=1$ by \cite[Proposition~5.7]{Thi21GenRnk}.

If $A$ is also separable, then the generators in $A$ form a dense $G_\delta$-subset;
see \autoref{rmk:gr1}.
\end{proof}

\begin{rmk}
\label{rmk:grZstable}
Let $A$ be a unital, separable $\mathcal{Z}$-stable \ca.
It was shown in \cite[Theorem~3.8]{ThiWin14GenZStableCa} that $A$ contains a generator.
If $A$ is also approximately subhomogeneous, then \autoref{prp:gr_ZstableASH} shows that generators are even dense in $A$. 
I~expect that every $\mathcal{Z}$-stable \ca{} has generator rank one.
However, in general we do not even know that every $\mathcal{Z}$-stable \ca{} has real rank at most one.
\end{rmk}

\providecommand{\etalchar}[1]{$^{#1}$}
\providecommand{\href}[2]{#2}

\end{document}